\numberwithin{equation}{section}
\newcommand{\ko}{\mathcal K_0^n}
\newcommand{\ke}{\mathcal K_e^n}
\newcommand{\rn}{\mathbb R^n}
\newcommand{\sn}{ {S^{n-1}}}
\newtheorem{lemma}{Lemma}[section]
\newtheorem{theorem}[lemma]{Theorem}
\newtheorem{example}[lemma]{Example}
\newtheorem{defi}[lemma]{Definition}
\newtheorem{coro}[lemma]{Corollary}
\newtheorem{remark}[lemma]{Remark}
\title{The Minkowski problem in Gaussian probability space}
\author[Y. Huang]{Yong Huang}
\address{Institute of Mathematics, Hunan University, 2 Lushan S Road, 410082, Changsha, China}
\email{huangyong@hnu.edu.cn}
\author[D. Xi]{Dongmeng Xi}
\address{Department of Mathematics, Shanghai University, 266 Jufengyuan Rd, 200444, Shanghai, China; Courant Institute, New York University, 251 Mercer St, 10012, New York, USA}
\email{dongmeng.xi@live.com}
\author[Y. Zhao]{Yiming Zhao}
\address{Department of Mathematics,  Massachusetts Institute of Technology, 77 Massachusetts Ave, 02139, Cambridge, USA}
\email[corresponding author]{yimingzh@mit.edu}
\keywords{Minkowski problems, Gaussian Minkowski problem, Monge-Amp\`{e}re equations, degree theory}
\subjclass[2010]{52A40, 52A38, 35J96}
\thanks{Research of Huang is supported by the National Science Fund for Distinguished Young Scholars
(11625103), Tian Yuan Special Foundation (11926317) and Hunan Science and Technology Planning Project
(2019RS3016). Research of Xi is supported by National Natural Science Foundation of China (12071277), and STSCM program (20JC1412600). Research of Zhao is supported, in part, by U.S. National Science Foundation Grant DMS-2002778.}
\begin{document}
\begin{abstract}
	The Minkowski problem in Gaussian probability space is studied in this paper. In addition to providing an existence result on a Gaussian-volume-normalized version of this problem, the main goal of the current work is to provide uniqueness and existence results on the Gaussian Minkowski problem (with no normalization required). 
\end{abstract}
\maketitle
\section{Introduction}

The seminal work \cite{minkowski2} by Minkowski in 1903 can be viewed as the starting point of the now vibrant Brunn-Minkowski theory in convex geometry. Minkowski's work relied on a remarkable result by Steiner in 1840 that combines the power of the usual operations in the Euclidean space with the usual Lebesgue measure in the space. The so-called \emph{Steiner formula} states that in $\rn$, the Lebesgue measure of $K+t\cdot B=\{x+ty: x\in K, y\in B\}$ where $K$ is a convex body (compact convex set), $B$ is the Euclidean unit ball and $t>0$ is a polynomial of degree $n$ in $t$ with coefficients carrying essential geometric information regarding $K$. These coefficients are known as \emph{quermassintegrals} and include volume, surface area, mean width and many more geometric invariants. Inequalities and their equality conditions involving these invariants can then be used to identify geometric shapes---perhaps the most well-known one is the isoperimetric inequality that identifies balls. These invariants, when being ``differentiated'', generate geometric measures that arguably carry more geometric information and at times all information as they can be used to uniquely recover the geometric shape. The celebrated Minkowski problem is one such example (perhaps the most well-known one). Minkowski asked if a given Borel measure $\mu$ on $\sn$ can be used to reconstruct a convex body whose \emph{surface area measure} is precisely the given measure $\mu$ and if the reconstruction is unique. Here, the surface area measure of $K$, denoted by $S_K$, is uniquely determined by the Aleksandrov's variational formula   
\begin{equation}
\label{eq local 9002}
	\lim_{t\rightarrow 0} \frac{\mathcal{H}^{n}(K+tL)-\mathcal{H}^n(K)}{t} = \int_{\sn} h_L(v)dS_K(v)
\end{equation}
where $h_L:\sn\rightarrow \mathbb{R}$ is the support function of $L$ (see \eqref{eq local 0004}). The influence of the Minkowski problem is widespread. In differential geometry, this is the problem of the prescription of Gauss curvature; in nonlinear PDE, it has the appearance of Monge-Amp\`{e}re equation. For the last three decades, there have been many Minkowski-type problems, each involving a certain geometric measures generated by ``differentiating'' an invariant in Steiner's formula in a way such as in \eqref{eq local 9002}. These Minkowski problems can be understood as the problems of reconstructing convex bodies in manners specified by the geometric measures in question and each of them, when asked in the smooth category, reduces to a certain fully nonlinear elliptic PDE of varying natures. Some of the most prominent Minkowski-type problems include the $L_p$ Minkowski problem (see \cite{MR1231704, MR2132298,MR2254308}), the logarithmic Minkowski problem (see \cite{BLYZ}), and the dual Minkowski problem (see \cite{HLYZ}). We shall provide a short review of these problems shortly. 

Perhaps of equal significance as the Lebesgue measure in $\rn$ is the Gaussian probability measure $\gamma_n$ given by
\begin{equation*}
	\gamma_n(E)=\frac{1}{(\sqrt{2\pi})^n} \int_{E}e^{-\frac{|x|^2}{2}}dx.
\end{equation*} 
Unlike Lebesgue measure, Gaussian probability measure is neither translation invariant nor homogeneous. Moreover, the density decays exponentially fast as $|x|\rightarrow \infty$. The ``surface area measure'' in the Gaussian probability space is known as the \emph{Gaussian surface area measure}, which was studied in, for example, Ball \cite{MR1243336} and Nazarov \cite{MR2083397}. In this paper, we will retrace the steps of Minkowski, Aleksandrov among many others and study the corresponding Minkowski problem in Gaussian probability space. As we will see shortly, the missing features such as translation invariance and homogeneity, along with exponential decay, causes the behavior of the \emph{Gaussian Minkowski problem} to be quite mysterious and differs significantly from that of the Minkowski problem. 

The following variational formula allows us to ``differentiate'' the Gaussian volume $\gamma_n(\cdot)$ on the set of convex bodies:
\begin{equation*}
	\lim_{t\rightarrow 0} \frac{\gamma_n(K+tL)-\gamma_n(K)}{t} = \int_{\sn } h_L dS_{\gamma_n, K},
\end{equation*}
for any convex bodies $K$ and $L$ containing the origin in their interiors. The proof will be given in Theorem \ref{thm variational formula}. The uniquely determined Borel measure $S_{\gamma_n, K}$ is defined, in an equivalent way, in \eqref{eq local 7001} and will be referred to as the \emph{Gaussian surface area measure} of $K$ for its corresponding role in Gauss probability space when compared to surface area measure in the Lebesgue measure space. When $K$ is sufficiently smooth, its Gaussian surface area measure is absolutely continuous with respect to spherical Lebesgue measure:
\begin{equation}
\label{eq local 9040}
	dS_{\gamma_n, K}(v) = \frac{1}{(\sqrt{2\pi})^n} e^{-\frac{|\nabla h_K|^2+h_K^2}{2}}\det(\nabla^2 h_K + hI),
\end{equation}
where $h_K:\sn\rightarrow \mathbb{R}$ is the support function of $K$ and $\nabla$, $\nabla^2$ are gradient and Hessian operators on $\sn$ with respect to the standard metric. 

It is therefore natural to wonder what measures can be used to reconstruct convex bodies in Gaussian probability space based on their Gaussian surface area measure and whether Gaussian surface area measure uniquely identifies the body. 

\textbf{The Gaussian Minkowski problem.} Given a finite Borel measure $\mu$, what are the necessary and sufficient conditions on $\mu$ so that there exists a convex body $K$ with $o\in \text{int}\,K$ such that
\begin{equation}
\label{eq local 00005}
	\mu  = S_{\gamma_n, K}?
\end{equation}
If $K$ exists, to what extent is it unique?

Because of \eqref{eq local 9040}, when the given measure $\mu$ has a density $d\mu = fdv$, the Gaussian Minkowski problem reduces to solving the following Monge-Amp\`{e}re type equation on $\sn$,
\begin{equation*}
	\frac{1}{(\sqrt{2\pi})^n} e^{-\frac{|\nabla h|^2+h^2}{2}}\det (\nabla^2 h+hI) = f.
\end{equation*}

By the works of Ball \cite{MR1243336} and Nazarov \cite{MR2083397}, it is simple to notice that the allowable $\mu$ in the Gaussian Minkowski problem cannot have an arbitrarily big total mass. In fact, the Gaussian surface area of any convex set in $\mathbb{R}^n$ is up to a constant  bounded from above by $n^\frac{1}{4}$.   

We will briefly discuss several other features that distinguish the Gaussian Minkowski problem from the Minkowski problem in Lebesgue measure space, which are what makes the problem more interesting and simultaneously more challenging. 

To start, notice that when $K$ is a centered ball of radius $r$, according to \eqref{eq local 9040}, the density of its Gaussian surface area measure is given by $f_r\equiv \frac{1}{(\sqrt{2\pi})^n}e^{-r^2/2}r^{n-1}$. Notice that $e^{-r^2/2}r^{n-1}\rightarrow 0$ both when $r$ approaches $0$ and $\infty$. Thus, in full generality, even when $\mu = cdv$ for some constant $c>0$, the solutions to the Gaussian Minkowski problem are not unique. This is a result of the fact that the Gaussian probability space ``thins'' out exponentially as you travel away from the origin and therefore both larger and smaller convex bodies in $\rn$ can have relatively small Gaussian surface area. However, as we will show in Section 4, when restricted to convex bodies with larger than $1/2$ Gaussian volume, uniqueness part of the Gaussian Minkowski problem can be established. 
\begin{theorem}
\label{thm intro uniqueness}
Suppose $K, L$ are two convex bodies in $\rn$ that contain the origin in their interiors and $K, L$ both solve the Gaussian Minkowski problem; i.e.,
	\begin{equation*}
		S_{\gamma_n, K} = S_{\gamma_n, L}= \mu.
	\end{equation*}
	If $\gamma_n(K),\gamma_n(L)\geq 1/2$, then $K=L$.
\end{theorem}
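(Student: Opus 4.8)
The plan is to connect $K$ and $L$ through Minkowski combinations and play the Ehrhard--Borell inequality against the variational formula. Set $f(t)=\gamma_n\big((1-t)K+tL\big)$ for $t\in[0,1]$; this convex body has support function $(1-t)h_K+th_L$. Applying the variational formula (Theorem~\ref{thm variational formula}, extended from the dilates $K+tL$ to this linear family of support functions in the routine way) gives $f'(0^+)=\int_{\sn}(h_L-h_K)\,dS_{\gamma_n,K}$ and, reparametrizing near $t=1$, $f'(1^-)=\int_{\sn}(h_L-h_K)\,dS_{\gamma_n,L}$. Since $S_{\gamma_n,K}=S_{\gamma_n,L}=\mu$, these two one-sided derivatives agree; call the common number $c=\int_{\sn}(h_L-h_K)\,d\mu$.

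Now bring in the Ehrhard--Borell inequality. Writing $\Phi$ for the standard one-dimensional Gaussian distribution function, I would first check that $\Psi(t):=\Phi^{-1}\big(f(t)\big)$ is concave on $[0,1]$: since $(1-\lambda)\big((1-t_0)K+t_0L\big)+\lambda\big((1-t_1)K+t_1L\big)=(1-t_\lambda)K+t_\lambda L$ with $t_\lambda=(1-\lambda)t_0+\lambda t_1$, concavity of $\Psi$ is exactly the Ehrhard--Borell inequality applied between the bodies $(1-t_0)K+t_0L$ and $(1-t_1)K+t_1L$. The hypothesis enters here: $\gamma_n(K),\gamma_n(L)\ge 1/2=\Phi(0)$ forces $\Psi(0),\Psi(1)\ge 0$, hence $\Psi(t)\ge(1-t)\Psi(0)+t\Psi(1)\ge 0$ on all of $[0,1]$, i.e.\ $f(t)\ge 1/2$ there. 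On $[0,\infty)$ the function $\Phi$ is increasing \emph{and} concave, so $f=\Phi\circ\Psi$ is a composition of a concave function with an increasing concave function, hence concave on $[0,1]$. A concave function on $[0,1]$ whose one-sided derivatives at the two endpoints agree must be affine, so $f(t)=(1-t)\gamma_n(K)+t\gamma_n(L)$ for all $t$.

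It remains to collapse this to $K=L$. If $\gamma_n(K)\ne\gamma_n(L)$, then $\Phi^{-1}(\gamma_n(K))\ne\Phi^{-1}(\gamma_n(L))$ with both values in $[0,\infty)$, and strict concavity of $\Phi$ on $[0,\infty)$ gives $\Phi\!\big(\tfrac12\Phi^{-1}(\gamma_n(K))+\tfrac12\Phi^{-1}(\gamma_n(L))\big)>\tfrac12\gamma_n(K)+\tfrac12\gamma_n(L)$; but Ehrhard--Borell bounds $f(1/2)$ below by the left side, while affinity of $f$ forces $f(1/2)=\tfrac12\gamma_n(K)+\tfrac12\gamma_n(L)$ --- a contradiction. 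Hence $\gamma_n(K)=\gamma_n(L)=:m$ and $f\equiv m$, so $\gamma_n\big(\tfrac12K+\tfrac12L\big)=m=\gamma_n(K)^{1/2}\gamma_n(L)^{1/2}$. Thus equality holds in the Ehrhard--Borell inequality (equivalently, in the Pr\'ekopa--Leindler inequality for the log-concave weight $e^{-|x|^2/2}$) at $t=1/2$, and the equality characterization for full-dimensional convex bodies, together with $o\in\text{int}\,K\cap\text{int}\,L$, yields $K=L$.

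The conceptual heart is the second paragraph: because $\gamma_n\ge 1/2$, the curve $\Psi$ stays in the region where $\Phi$ is concave, which converts Ehrhard concavity of $\Psi$ into concavity of $f$ itself, and only then does the equality of endpoint derivatives (provided by $S_{\gamma_n,K}=S_{\gamma_n,L}$) force $f$ to be affine and ultimately constant. The two technical points I expect to need the most care are: making the differentiation in the first paragraph precise---extending the variational formula from $K+tL$ to $(1-t)h_K+th_L$ and justifying the one-sided derivative at $t=1$---and invoking the correct equality case of the Ehrhard--Borell (or Pr\'ekopa--Leindler) inequality in the last step, where one must rule out a nontrivial translate; everything else is elementary one-variable convexity.
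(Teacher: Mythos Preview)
Your proof is correct and shares the same ingredients as the paper's---Ehrhard's inequality, the variational formula, the hypothesis $\gamma_n\geq 1/2$, and the equality case of log-concavity---but you package the key step differently. The paper differentiates Ehrhard at both endpoints to obtain two inequalities, combines them into
\[
\big(\Psi'(\gamma_n(K))-\Psi'(\gamma_n(L))\big)\big(\Psi(\gamma_n(K))-\Psi(\gamma_n(L))\big)\leq 0,
\]
and then invokes the explicit formula $\Psi'(x)=\sqrt{2\pi}\,e^{\Psi(x)^2/2}$ to see that $\Psi'$ is increasing on $[1/2,1]$, forcing $\gamma_n(K)=\gamma_n(L)$. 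You instead observe that $\gamma_n\geq 1/2$ keeps $\Psi(t)\geq 0$, so the concavity of $\Phi$ on $[0,\infty)$ lets you compose and conclude that $f(t)=\gamma_n((1-t)K+tL)$ is \emph{itself} concave; the equality of endpoint derivatives (from $S_{\gamma_n,K}=S_{\gamma_n,L}$) then forces $f$ affine, and strict concavity of $\Phi$ on $[0,\infty)$ rules out a nonconstant affine $f$. These two routes are equivalent at the level of calculus---the monotonicity of $\Psi'$ on $[1/2,1]$ and the concavity of $\Phi$ on $[0,\infty)$ are the same fact $\Phi''(x)=-x\phi(x)\leq 0$---but your formulation has the virtue of making the role of the threshold $1/2$ completely transparent: it is exactly what keeps the curve $\Psi$ in the region where $\Phi$ is concave. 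The final step (equality in log-concavity implies $K=L$) is identical to the paper's. The technical caveats you flag---extending the variational formula to the family $(1-t)h_K+th_L$ and citing the correct equality case---are both handled in the paper (Theorem~\ref{thm variational formula} is stated for arbitrary perturbations $f\in C(\sn)$, and the equality condition for convex bodies is Ehrhard's own \cite{MR850753}), so neither is a genuine obstacle.
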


Our uniqueness result utilizes the Ehrhard's inequality with its equality condition, along with several of its consequences. Ehrhard's inequality is an isoperimetric inequality in the Gaussian probability space and implies that half-spaces, among all other sets of the same Gaussian volume, attain the least Gaussian surface area. As mentioned before, the Gaussian probability measure does not enjoy any homogeneity. As a result, there are many isoperimetric inequalities in the Gaussian probability space. Of particular interest is the dimensional Brunn-Minkowski inequality (for $o$-symmetric convex bodies) conjectured by Gardner-Zvavitch \cite{MR2657682}, with important contribution by Kolesnikov-Livshyts \cite{KL1} followed by a recent confirmation by Eskenazis-Moschidis \cite{alex2020dimensional}. Gardner-Zvavitch \cite{MR2657682} observed that this inequality neither implies nor is implied by Ehrhard's inequality. The dimensional Brunn-Minkowski inequality is also linked with the conjectured log-Brunn-Minkowski inequality (planar case established in \cite{MR2964630})---an inequality in the Lebesgue measure space but with different addition---following a result by Livshyts-Marsiglietti-Nayar-Zvavitch \cite{MR3710641}, which was very recently extended in \cite{hosle2020l_p}. We also would like to mention the work of Borell \cite{MR399402}.

Since Gaussian probability measure is not translation-invariant, the position of a convex body with respect to the origin is of critical importance. In terms of the existence part of the Gaussian Minkowski problem, we will restrict ourselves to the $o$-symmetric case; that is, when the given measure $\mu$ is an even measure and the solution set is the set of all $o$-symmetric convex bodies. As Example \ref{example 1} in Appendix shows, even in this restricted case, the characterization of permissible measures $\mu$ are quite complicated. The situation is made worse by the lack of homogeneity in the Gaussian probability space (and therefore the Gaussian volume and surface area measure). Minkowski-type problems in which one deals with non-homogeneous geometric measures are typically known as of Orlicz type, which has their origin in the work \cite{MR2652213} by Haberl-Lutwak-Yang-Zhang for the Orlicz Minkowski problem that generalizes both the classical Minkowski problem and the $L_p$ Minkowski problem. See also \cite{MR3209355,MR3255458,MR3263511, MR3663326,MR3735745,MR3897433} for additional results and contributions in the Orlicz extension of the classical Brunn-Minkowski theory. Very recently, Gardner-Hug-Weil-Xing-Ye extended the recently posed dual Minkowski problem to its Orlicz counterpart \cite{MR3882970, MR4040624}. Inspired by their work, particularly the work \cite{MR2652213} by Haberl-Lutwak-Yang-Zhang, we obtain the following normalized solution to the even Gaussian Minkowski problem.
\begin{theorem}
\label{thm intro normalized problem}
	Suppose $\mu$ is an finite even Borel measure not concentrated in any closed hemisphere. Then for each $0<\alpha<\frac{1}{n}$, there exists an $o$-symmetric convex body $K$ such that 
	\begin{equation*}
		\mu = cS_{\gamma_n, K},
	\end{equation*}
	where $$c=\frac{1}{\gamma_n(K)^{1-\alpha}}.$$
\end{theorem}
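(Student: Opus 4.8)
The plan is to obtain $K$ as the maximizer of an energy functional whose Euler--Lagrange equation is exactly the normalized Gaussian Minkowski equation. For a nonempty origin-symmetric compact convex set $Q\subseteq\rn$, set
\begin{equation*}
	\Phi(Q)=\frac{1}{\alpha}\gamma_n(Q)^\alpha-\int_{\sn}h_Q\,d\mu ,
\end{equation*}
and let $M=\sup_Q\Phi(Q)$ over all such $Q$ (degenerate $Q$ allowed, with $\gamma_n(Q)=0$ there). Two elementary estimates frame the problem. Since $\gamma_n(Q)\le 1$ and $h_Q\ge0$ we get $\Phi(Q)\le\frac{1}{\alpha}$, so $M<\infty$. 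On the other hand $\gamma_n(\varepsilon B)$ is comparable to $\varepsilon^n$ as $\varepsilon\to0^+$, whereas $\int_{\sn}h_{\varepsilon B}\,d\mu=\varepsilon|\mu|$; because $n\alpha<1$ the term $\varepsilon^{n\alpha}$ dominates $\varepsilon$, so $\Phi(\varepsilon B)>0$ for small $\varepsilon$ and hence $M>0$. This is the one place where the hypothesis $\alpha<\frac{1}{n}$ is used.

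Next I would show the supremum is attained at a full-dimensional body. Let $Q_j$ be a maximizing sequence. Since $\mu$ is not concentrated in any closed hemisphere, the function $u\mapsto\int_{\sn}\langle v,u\rangle_+\,d\mu(v)$ is continuous and strictly positive on $\sn$, hence has a positive minimum $c_0$. If $R_j=\max_{x\in Q_j}|x|$ and $R_ju_j$ is a point of $Q_j$ farthest from $o$, then origin-symmetry gives $h_{Q_j}(v)\ge R_j|\langle v,u_j\rangle|$, so $\int_{\sn}h_{Q_j}\,d\mu\ge c_0R_j$; combined with $\frac{1}{\alpha}\gamma_n(Q_j)^\alpha\le\frac{1}{\alpha}$ and $\Phi(Q_j)\to M>0$, the radii $R_j$ are bounded. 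By the Blaschke selection theorem, a subsequence of $Q_j$ converges to an origin-symmetric compact convex set $K$; since support functions converge uniformly and $\gamma_n$ is continuous under Hausdorff convergence, $\Phi(Q_j)\to\Phi(K)=M$. If $K$ were degenerate then $\Phi(K)=-\int_{\sn}h_K\,d\mu\le0<M$, a contradiction; thus $K\in\ke$, $o\in\text{int}\,K$, $\gamma_n(K)>0$, and $K$ maximizes $\Phi$.

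Finally I would extract the equation. Fix an even $g\in C(\sn)$. For $|t|$ small, $h_t:=h_K+tg>0$, so the Wulff shape $[h_t]$ is an origin-symmetric convex body containing $o$ in its interior. By the variational formula of Theorem \ref{thm variational formula}, in its Wulff-shape version (valid by the usual Aleksandrov-type argument: $[h_t]\to K$ in the Hausdorff metric and the corresponding Gaussian surface area measures converge weakly), $t\mapsto\gamma_n([h_t])$ is differentiable at $0$ with derivative $\int_{\sn}g\,dS_{\gamma_n,K}$; since $\gamma_n(K)>0$, this gives
\begin{equation*}
	\frac{1}{\alpha}\gamma_n([h_t])^\alpha=\frac{1}{\alpha}\gamma_n(K)^\alpha+t\,\gamma_n(K)^{\alpha-1}\int_{\sn}g\,dS_{\gamma_n,K}+o(t).
\end{equation*}
Since $h_{[h_t]}\le h_t$ pointwise, $\int_{\sn}h_{[h_t]}\,d\mu\le\int_{\sn}h_K\,d\mu+t\int_{\sn}g\,d\mu$ for $t>0$, and therefore
\begin{equation*}
	\Phi([h_t])\ge\Phi(K)+t\Big(\gamma_n(K)^{\alpha-1}\int_{\sn}g\,dS_{\gamma_n,K}-\int_{\sn}g\,d\mu\Big)+o(t).
\end{equation*}
Maximality of $K$ forces (divide by $t>0$, let $t\to0^+$) the inequality $\gamma_n(K)^{\alpha-1}\int_{\sn}g\,dS_{\gamma_n,K}\le\int_{\sn}g\,d\mu$; replacing $g$ by $-g$ gives the reverse, so $\int_{\sn}g\,d\mu=\gamma_n(K)^{\alpha-1}\int_{\sn}g\,dS_{\gamma_n,K}$ for every even $g\in C(\sn)$. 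As $\mu$ and $S_{\gamma_n,K}$ are both even ($K$ being origin-symmetric), this yields $\mu=\gamma_n(K)^{\alpha-1}S_{\gamma_n,K}=\frac{1}{\gamma_n(K)^{1-\alpha}}S_{\gamma_n,K}$, as claimed.

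The main obstacle I anticipate is twofold: the non-degeneracy of the maximizing sequence, i.e.\ ruling out a lower-dimensional limit --- here both the assumption that $\mu$ is not concentrated in a closed hemisphere (for boundedness of the $R_j$) and the assumption $\alpha<\frac{1}{n}$ (for $M>0$) are genuinely needed; and the justification of differentiating $\gamma_n$ along the Wulff-shape deformations $[h_K+tg]$ for an arbitrary continuous $g$, which requires the variational formula in a form slightly more general than for Minkowski combinations, obtained via Aleksandrov-type convergence of Gaussian surface area measures.
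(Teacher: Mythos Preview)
Your proof is correct and follows essentially the same variational approach as the paper: both maximize the functional $\frac{1}{\alpha}\gamma_n(K)^\alpha-\int_{\sn} h_K\,d\mu$, use $\alpha<\frac{1}{n}$ to show the supremum is positive via small balls, use the non-concentration hypothesis to bound a maximizing sequence, extract a full-dimensional maximizer by Blaschke selection, and read off the Euler--Lagrange equation from the variational formula. Note that Theorem~\ref{thm variational formula} is already stated for Wulff shapes $[h_K+tf]$ with arbitrary $f\in C(\sn)$, so your anticipated difficulty about needing a more general variational formula does not arise.
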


The proof of Theorem \ref{thm intro normalized problem} is contained in Section \ref{subsection normalized solution}, which is of variational nature. If the Gaussian surface area measure $S_{\gamma_n, K}$ was homogeneous in $K$, one would then be able to get rid of the constant $c$. This, however, is \emph{far} from a simple procedure and as a matter of fact, to the best knowledge of the authors', every solution to Orlicz-Minkowski-type problem in the works mentioned above are all normalized solutions (meaning that there exists a constant $c$ in the solutions). It is unclear whether the reconstruction process is unique by allowing such a constant $c$ in the solution. Therefore, it is much desired to obtain a solution to the non-normalized version of the Gaussian Minkowski problem as stated in \eqref{eq local 00005} where uniqueness to a certain extent is guaranteed by one of our results (Theorem \ref{thm intro uniqueness}). One of our main results in the current paper is a progress in this direction by obtaining an existence result of the Gaussian Minkowski problem (non-normalized, restricted to $o$-symmetric case) via a degree theory approach. In particular, we will show
\begin{theorem}[Existence of smooth solutions]
\label{thm local 1}
	Let $0<\alpha<1$ and $f\in C^{2,\alpha}(\sn)$ be a positive even function with $|f|_{L_1}<\frac{1}{\sqrt{2\pi}}$. Then there exists a unique $C^{4,\alpha}$ $o$-symmetric $K$ with $\gamma_n(K)>1/2$ such that
	\begin{equation}
	\label{eq local 7030}
		\frac{1}{(\sqrt{2\pi})^n}e^{-\frac{|\nabla h_K|^2+h_K^2}{2}}\det (\nabla^2 h_K +h_KI) = f.
	\end{equation}
\end{theorem}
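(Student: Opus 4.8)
The plan is to establish existence via a degree-theoretic argument and uniqueness via Theorem~\ref{thm intro uniqueness}. First I would reformulate \eqref{eq local 7030} as a fixed-point problem: define the operator that sends a support function $h$ on $\sn$ to the left-hand side of \eqref{eq local 7030}, restricted to the open set $\mathcal{O}$ of positive even $C^{4,\alpha}$ functions with $\gamma_n(\text{associated body})>1/2$ and with a priori two-sided bounds on $h$ and on the eigenvalues of $\nabla^2 h + hI$. The strategy is to deform $f$ to a constant-coefficient datum through the family $f_t = (1-t)f_0 + t f$ (with $f_0$ a suitable constant less than $\frac{1}{\sqrt{2\pi}}$, so that a centered ball $B_r$ with $\gamma_n(B_r)>1/2$ solves the equation with datum $f_0$), keeping $|f_t|_{L_1}<\frac{1}{\sqrt{2\pi}}$ along the path, and to show the Leray--Schauder degree of the solution map is nonzero at $t=0$ by a direct computation on the ball; then degree invariance along $t\in[0,1]$, together with closedness of the solution set, yields a solution at $t=1$.

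The main obstacle — and the bulk of the work — is establishing a priori estimates that confine all solutions along the homotopy to a fixed admissible region on which the degree is well-defined; this is where the absence of homogeneity and the exponential decay genuinely bite. The key steps are: (i) an \emph{upper bound} $h_K \le C_1$, which should follow from the constraint $\gamma_n(K)>1/2$ combined with the bound on the total mass $S_{\gamma_n,K}(\sn) = |f_t|_{L_1}<\frac{1}{\sqrt{2\pi}}$ — intuitively, a body that is too elongated in some direction would, because of the rapid Gaussian decay, have too small a Gaussian surface area in nearby directions to be compatible with a positive lower bound on $f_t$, yet by $o$-symmetry cannot be too large without forcing the measure to spread; (ii) a \emph{lower bound} $h_K \ge C_0>0$ (the body does not degenerate to a lower-dimensional set), which must use that $f_t$ is bounded below away from zero together with the equation — if $h_K(v)\to 0$ along some direction, then since $e^{-(|\nabla h|^2+h^2)/2}$ is bounded above by $1$, one needs $\det(\nabla^2 h + hI)$ to stay bounded below, and one controls this via the positivity of $f_t$ and an integral/covering argument on $\sn$; (iii) promoting these $C^0$ bounds to $C^2$ bounds on $h_K$ (equivalently, two-sided bounds on the principal radii of curvature), using the Monge--Ampère structure of \eqref{eq local 7030}; and (iv) upgrading to $C^{4,\alpha}$ via the Evans--Krylov theorem and Schauder estimates, since once the eigenvalues of $\nabla^2 h + hI$ are pinched between two positive constants the equation is uniformly elliptic and concave in the Hessian.

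With the a priori bounds in hand, the degree computation at $t=0$ amounts to linearizing \eqref{eq local 7030} at the centered ball solution $B_r$: the linearized operator is, up to the positive factor $\frac{1}{(\sqrt{2\pi})^n}e^{-r^2/2}r^{n-2}$, the operator $\varphi\mapsto \Delta_{\sn}\varphi + (n-1)\varphi - r^2\varphi$ acting on even functions (the $-r^2\varphi$ term coming from differentiating the Gaussian weight $e^{-h^2/2}$ at $h=r$, and the gradient term contributing nothing at a constant). On the space of even functions, $\Delta_{\sn}$ has eigenvalues $0, -2n, \dots$ (the odd eigenvalue $-(n-1)$ being excluded), so the eigenvalues of the linearized operator are $n-1-r^2$ and $n-1-2n-r^2 = -(n+1)-r^2, \dots$, all of which have a definite sign provided $r^2\ne n-1$; choosing $r$ (equivalently $f_0$) so that $r^2>n-1$ while keeping $\gamma_n(B_r)>1/2$ — possible since $\gamma_n(B_r)\to 1$ as $r\to\infty$ and we only need $|f_0|_{L_1}<\frac{1}{\sqrt 2\pi}$, which holds for the constant $f_0 = f_r$ when $r$ is large — makes the linearization an isomorphism, so the local degree is $\pm 1$ and in particular nonzero. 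This also shows the ball is the \emph{unique} solution at $t=0$ among bodies with $\gamma_n>1/2$, consistent with the global uniqueness at $t=1$ furnished by Theorem~\ref{thm intro uniqueness}; thus the degree equals $\pm1$ throughout and \eqref{eq local 7030} has exactly one $C^{4,\alpha}$ $o$-symmetric solution with $\gamma_n(K)>1/2$, completing the proof.
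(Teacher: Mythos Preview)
Your overall architecture---homotopy to a constant datum, a priori estimates defining an admissible open set, linearization at the ball, degree invariance---is exactly the paper's approach, and your computation of the linearized operator at $h\equiv r$ is correct.

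There is, however, one genuine gap. Your open set $\mathcal{O}$ includes the constraint $\gamma_n(K)>\tfrac12$, so $\partial\mathcal{O}$ contains the locus $\gamma_n(K)=\tfrac12$, and you must show that no solution along the homotopy lands there. You never address this; your items (i)--(iv) all implicitly \emph{assume} $\gamma_n(K)>\tfrac12$ (indeed you invoke it for the upper bound) but do not explain what prevents a solution from sliding down to $\gamma_n(K)=\tfrac12$. This is precisely where the hypothesis $|f|_{L_1}<\tfrac{1}{\sqrt{2\pi}}$ enters, and it is the one place in the argument where that specific threshold is used. The mechanism is the Gaussian isoperimetric inequality: if $\gamma_n(K)=\tfrac12$ then $|S_{\gamma_n,K}|\ge \varphi(\Phi^{-1}(\tfrac12))=\varphi(0)=\tfrac{1}{\sqrt{2\pi}}$, contradicting $|S_{\gamma_n,K}|=|f_t|_{L_1}<\tfrac{1}{\sqrt{2\pi}}$. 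Without this step the degree argument is incomplete, because you cannot rule out solutions on $\partial\mathcal{O}$.

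Two smaller remarks. First, your route to the lower bound on $h_K$ via controlling $\det(\nabla^2 h+hI)$ is unnecessarily indirect: once you have $\gamma_n(K)>\tfrac12$, the lower bound is immediate, since $h_K(v)$ small forces $K$ into the thin slab $\{|x\cdot v|\le h_K(v)\}$, whose Gaussian measure is near zero. Second, at $t=0$ you need to know the ball is the \emph{only} solution in $\mathcal{O}$, not just that the linearization is invertible there; this global uniqueness comes from Theorem~\ref{thm intro uniqueness} applied at $t=0$, not from the linearization.
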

An approximation argument is then used to obtain an existence result (weak solution) for the Gaussian Minkowski problem.
\begin{theorem}
\label{thm local 2}
		Let $\mu$ be an even measure on $\sn$ that is not concentrated in any subspace and $|\mu|< \frac{1}{\sqrt{2\pi}}$. Then there exists a unique origin-symmetric $K$ with $\gamma_n(K)> 1/2$ such that
		\begin{equation*}
			S_{\gamma_n,K} = \mu.
		\end{equation*}
\end{theorem}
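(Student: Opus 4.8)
The plan is to obtain uniqueness for free from Theorem~\ref{thm intro uniqueness} and existence by approximating $\mu$ with smooth data, invoking Theorem~\ref{thm local 1}, establishing uniform two-sided bounds on the approximating bodies, and passing to a limit. If $K,L$ are $o$-symmetric with $\gamma_n(K),\gamma_n(L)>1/2$ and $S_{\gamma_n,K}=S_{\gamma_n,L}=\mu$, then a fortiori $\gamma_n(K),\gamma_n(L)\ge 1/2$, so Theorem~\ref{thm intro uniqueness} gives $K=L$; thus only existence needs to be proved.

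For existence, I would first mollify $\mu$ on $\sn$ against a positive, smooth, rotation-invariant approximate identity. This produces even positive functions $f_j\in C^{\infty}(\sn)\subset C^{2,\alpha}(\sn)$ with $d\mu_j:=f_j\,dv\rightharpoonup\mu$ weakly and $|f_j|_{L_1}=|\mu|<\frac{1}{\sqrt{2\pi}}$ for all $j$. Theorem~\ref{thm local 1} then provides $o$-symmetric $C^{4,\alpha}$ bodies $K_j$ with $\gamma_n(K_j)>1/2$ and $S_{\gamma_n,K_j}=\mu_j$. Once we know $c_0B\subseteq K_j\subseteq RB$ for constants $0<c_0\le R$ independent of $j$, the Blaschke selection theorem, continuity of $\gamma_n(\cdot)$ under Hausdorff convergence, and weak continuity of the Gaussian surface area measure yield (along a subsequence) a limit $o$-symmetric body $K$ with $c_0 B\subseteq K\subseteq RB$, $\gamma_n(K)\ge 1/2$, and $S_{\gamma_n,K}=\lim_j\mu_j=\mu$.

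The lower bound is easy: if $h_{K_j}(v)\le c$ for some $v\in\sn$, then, since $K_j$ is $o$-symmetric, $K_j\subseteq\{x:|\langle x,v\rangle|\le c\}$, so $\gamma_n(K_j)\le 2\Phi(c)-1$; because $\gamma_n(K_j)>1/2$ this forces $h_{K_j}(v)>c_0:=\Phi^{-1}(3/4)$ for every $v$, i.e. $K_j\supseteq c_0 B$. The upper bound is the main obstacle, and it is where the hypothesis that $\mu$ is not concentrated in any subspace is used. Suppose not: along a subsequence $R_j:=\max_{\sn}h_{K_j}=\max_{x\in K_j}|x|\to\infty$, attained at some $x_j\in\partial K_j$ with $|x_j|=R_j$; setting $v_j=x_j/R_j$ we get $h_{K_j}(v_j)=R_j$, and by symmetry and convexity $[-x_j,x_j]\subseteq K_j$, hence $h_{K_j}(v)\ge R_j\,|\langle v_j,v\rangle|$ for all $v$. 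On the other hand, taking $L=K_j$ in the variational formula of Theorem~\ref{thm variational formula} and differentiating $s\mapsto\gamma_n(sK_j)$ at $s=1$ (substitute $x=sy$) gives
\begin{equation*}
	\int_{\sn}h_{K_j}\,d\mu_j=\frac{1}{(\sqrt{2\pi})^n}\int_{K_j}(n-|y|^2)e^{-\frac{|y|^2}{2}}\,dy\le\frac{1}{(\sqrt{2\pi})^n}\int_{\rn}n\,e^{-\frac{|y|^2}{2}}\,dy=n.
\end{equation*}
Combining the two, $R_j\int_{\sn}|\langle v_j,v\rangle|\,d\mu_j(v)\le n$, so $\int_{\sn}|\langle v_j,v\rangle|\,d\mu_j(v)\to 0$; passing to a further subsequence with $v_j\to v_0$ and using $\mu_j\rightharpoonup\mu$ together with the uniform convergence of $|\langle v_j,\cdot\rangle|$ to $|\langle v_0,\cdot\rangle|$, we obtain $\int_{\sn}|\langle v_0,v\rangle|\,d\mu(v)=0$. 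Hence $\mu$ is concentrated on the subspace $v_0^{\perp}$, a contradiction. Therefore $R_j$ is bounded.

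It remains to improve $\gamma_n(K)\ge 1/2$ to $\gamma_n(K)>1/2$. If $\gamma_n(K)=1/2$, then by the Gaussian isoperimetric inequality contained in Ehrhard's inequality, together with its equality characterization, the total Gaussian surface area of $K$ satisfies $S_{\gamma_n,K}(\sn)\ge\frac{1}{\sqrt{2\pi}}$, with equality only for half-spaces; since $K$ is a bounded convex body it is not a half-space, so $|\mu|=S_{\gamma_n,K}(\sn)>\frac{1}{\sqrt{2\pi}}$, contradicting $|\mu|<\frac{1}{\sqrt{2\pi}}$. Thus $\gamma_n(K)>1/2$, and $K$ is the desired solution, which is unique by Theorem~\ref{thm intro uniqueness}. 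The steps demanding the most care are the a priori upper bound above (where the nondegeneracy of $\mu$ is essential) and the weak continuity of $K\mapsto S_{\gamma_n,K}$ used to pass to the limit; the remaining steps are routine.
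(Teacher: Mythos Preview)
Your proof is correct, and in fact the upper-bound step you chose is \emph{simpler} than the route the paper takes. Both proofs mollify $\mu$, apply Theorem~\ref{thm local 1} to obtain $o$-symmetric $K_j$ with $\gamma_n(K_j)>1/2$, extract a limit via Blaschke selection and the weak continuity of $S_{\gamma_n,\cdot}$, and then upgrade $\gamma_n(K)\ge 1/2$ to strict inequality using Corollary~\ref{coro local 1001} (the Gaussian isoperimetric inequality). The lower bound $K_j\supseteq c_0B$ is handled identically, via Lemma~\ref{lemma C0 lower bound}. The difference is the uniform upper bound on $K_j$. The paper appeals to the structural Lemma~\ref{lemma key}, which, under $|h_{K_i}|_{L^\infty}\to\infty$, shows that the measures $S_{\gamma_n,K_i}$ must concentrate near some great subsphere $v^{\perp}\cap\sn$; this lemma in turn rests on Lemma~\ref{lemma far away 0} and a John-ellipsoid analysis. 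You instead observe directly that
\[
\int_{\sn}h_{K_j}\,dS_{\gamma_n,K_j}=\left.\frac{d}{ds}\right|_{s=1}\gamma_n(sK_j)=\frac{1}{(\sqrt{2\pi})^n}\int_{K_j}(n-|y|^2)e^{-|y|^2/2}\,dy\le n,
\]
and combine this with $h_{K_j}(v)\ge R_j|\langle v_j,v\rangle|$ to force $\int_{\sn}|\langle v_0,v\rangle|\,d\mu(v)=0$ in the limit. This bypasses the machinery of Lemma~\ref{lemma key} entirely and gives a cleaner, essentially one-line contradiction; the price is that you do not obtain the more detailed ``concentration near a subsphere'' statement that Lemma~\ref{lemma key} provides (and which the paper also uses inside the $C^0$ estimate Lemma~\ref{lemma C0}). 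For the purposes of this theorem, your argument is both sufficient and more elementary.
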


Notice that Theorem \ref{thm local 2} \emph{trivially} implies that if $\mu$ is an even measure on $\sn$ that is not concentrated in any subspace, there are infinitely many pairs of $c$ and $K$ such that $\mu = cS_{\gamma_n, K}$.

The author would like to point out that \emph{a differently normalized version} of the Gaussian Minkowski problem is a special case of the general dual Orlicz-Minkowski problem considered in \cite{MR3882970, MR4040624}. However, it is important to note that none of the main theorems in the current paper overlap with those presented there. One should also note that the Minkowski problem in measurable spaces whose densities possess certain homogeneity and concavity has been previously considered in Livshyts \cite{MR4008520}. 

Before ending this section, a short review of the aforementioned Minkowski-type problems in $\mathbb{R}^n$ with Lebesgue measure will be provided given their relevance to the current work and their importance in convex geometry. However, the more eager readers should feel free to skip it and jump to Section 2. 

Nine decades after Minkowski's seminal work \cite{minkowski2}, in the early 1990s, Lutwak \cite{MR1231704,MR1378681} laid the foundation to the now fruitful $L_p$ Brunn-Minkowski theory. Due to limit of space, we mention only a selection of beautiful results in this area, \cite{MR2530600,MR1987375, MR3415694,MR2964630,MR3148545, MR2652209, MR2680490, MR2262841, MR2880241,MR1901250,MR2019226,MR2729006, MR1863023} and refer the interested readers to Schneider's book \cite{schneider2014} for more details. In \cite{MR1231704}, Lutwak introduced the $L_p$ surface area measure which is the counterpart of the classical surface area measure in the $L_p$ theory and posed the corresponding $L_p$ Minkowski problem. When $p=1$, the $L_p$ Minkowski problem is precisely the classical Minkowski problem Lutwak himself solved the problem when $p> 1$ in the $o$-symmetric case whereas the more general case (non-symmetric) was settled by \cite{MR2132298,MR2254308}. The $L_p$ Minkowski problem when $p<1$ is much more complicated and contains \emph{challenging} problems such as the logarithmic Minkowski problem ($p=0$) and the centro-affine Minkowski problem ($p=-n$).

The logarithmic Minkowski problem characterizes cone volume measure which has been the central topic in a number of recent works. When the given data is even, the existence of solutions to the logarithmic Minkowski problem was completely solved in B\"{o}r\"{o}czky-Lutwak-Yang-Zhang \cite{BLYZ}. In the general case (non-even case), important contributions were made by Zhu \cite{MR3228445}, and later by B\"{o}r\"{o}czky-Heged\H{u}s-Zhu \cite{Boroczky20062015}. The logarithmic Minkowski problem has strong connections with isotropic measures (B{\"o}r{\"o}czky-Lutwak-Yang-Zhang \cite{MR3316972}) and curvature flows (Andrews \cite{MR1714339,MR1949167}).

The centro-affine Minkowski problem characterizes the centro-affine surface area measure whose density in the smooth case is the centro-affine Gauss curvature. The characterization problem, in this case, is the centro-affine Minkowski problem posed in Chou-Wang \cite{MR2254308}. See also Jian-Lu-Zhu \cite{MR3479715}, Lu-Wang \cite{MR2997361}, Zhu \cite{MR3356071}, \emph{etc.}, on this problem.

The readers are also referred to \cite{MR3872853, MR3680945} for some recent development of the $L_p$ Minkowski problem when $p<1$. 

The Minkowski problem and the $L_p$ Minkowski problem are within the framework introduced by Brunn and Minkowski. A parallel theory, which is known as the dual Brunn-Minkowski theory, was introduced by Lutwak (see Schneider \cite{schneider2014}) in the 1970s. The dual Brunn-Minkowski theory has been most effective in answering questions related to intersections. One major triumph of the dual Brunn-Minkowski theory is tackling the famous Busemann-Petty problem, see Gardner \cite{MR1298719}, Gardner-Koldobsky-Schlumprecht \cite{MR1689343}, Koldobsky \cite{MR1637955,MR1985195,MR2836117}, Lutwak \cite{MR963487}, and Zhang \cite{MR1689339}. The dual theory makes extensive use of techniques from harmonic analysis. Recently, the dual Brunn-Minkowski theory took a huge step forward when Huang-Lutwak-Yang-Zhang \cite{HLYZ} discovered the family of fundamental geometric measures---called dual curvature measures---in the dual theory. The dual Minkowski problem is the problem of prescribing dual curvature measures. The dual Minkowski problem introduces intrinsic PDEs---something long missing---to the dual Brunn-Minkowski theory. The dual Minkowski problem, while still largely open, has been  solved in the $o$-symmetric case when the associated index $q$ satisfies $q\in [0,n]$, see, for example \cite{HLYZ, MR3825606, MR3605843, MR3880233, MR4008522,MR3725875,MR3818073,MR4055992}.

The current work is along the lines of the classical problem raised by Minkowski, but now considered in the Gaussian probability space rather than the Lebesgue measure space. The lack of translation-invariance and homogeneity in the Gaussian probability space creates many challenges not encountered in the classical Minkowski problem.

\section{Preliminaries}
Some basics, as well as notations, regarding convex bodies will be provided in this section. For a general reference on the theory of convex bodies, the readers are referred to the book \cite{schneider2014} by Schneider.

Let $\rn$ be the $n$-dimensional Euclidean space. The unit sphere in $\rn$ is denoted by $\sn$. We will write $C(S^{n-1})$ for the space of continuous functions on $S^{n-1}$. We will use the subscript $e$ for even functions and the superscript $+$ for positive function so that $C_e^+(\sn)$ is used to denote the set of all even positive functions on $\sn$. For a Borel measure $\mu$ in a measure space, we will use $|\mu|$ for its total measure.

A convex body in $\rn$ is a compact convex set with nonempty interior. The boundary of $K$ is written as $\partial K$. Denote by $\ko$ the class of convex bodies that contain the origin in their interiors in $\rn$ and by $\ke$ the class of origin-symmetric convex bodies in $\rn$.

Let $K$ be a compact convex subset of $\rn$. The support function $h_K$ of $K$ is defined by
\begin{equation}
\label{eq local 0004}
	h_K(y) = \max\{x\cdot y : x\in K\}, \quad y\in\rn.
\end{equation}
The support function $h_K$ is a continuous function homogeneous of degree 1. Suppose $K$ contains the origin in its interior. The radial function $\rho_K$ is defined by
\[ \rho_K(x) = \max\{\lambda : \lambda x \in K\}, \quad x\in \rn\setminus \{0\}. \]
The radial function $\rho_K$ is a continuous function homogeneous of degree $-1$. It is not hard to see that $\rho_K(u)u \in
\partial K$ for all $u\in S^{n-1}$.

For each $f\in C^+(\sn)$, the Wulff shape $\bm{[}f\bm{]}$ generated by $f$ is the convex body defined by
\begin{equation*}
\bm{[}f\bm{]}= \{x\in \rn: x\cdot v \leq f(v), \text{ for all }v \in \sn\}.
\end{equation*}
It is apparent that $h_{\bm{[}f\bm{]}}\leq f$ and $\bm{[}h_K\bm{]} = K$ for each $K\in \ko$.

Suppose $K_i$ is a sequence of convex bodies in $\rn$. We say $K_i$ converges to a compact convex subset $K\subset \rn$ in Hausdorff metric if
\begin{equation}
\label{eq convergence convex bodies}
\max\{|h_{K_i}(v)-h_K(v)|:v\in S^{n-1}\}\rightarrow 0,
\end{equation}
as $i\rightarrow \infty$. If $K$ contains the origin in its interior, equation \eqref{eq convergence convex bodies} implies
\begin{equation*}
\max\{|\rho_{K_i}(u)-\rho_K(u)|:u\in S^{n-1}\}\rightarrow 0,
\end{equation*}
as $i\rightarrow \infty$.

For a compact convex subset $K$ in $\rn$ and $v \in \sn$, the supporting hyperplane $H(K,v)$ of $K$ at $v$ is given by
\begin{equation*}
H(K,v)=\{x\in K: x\cdot v = h_K(v)\}.
\end{equation*}
By its definition, the supporting hyperplane $H(K,v)$ is non-empty and contains only boundary points of $K$. For $x\in H(K,v)$, we say $v$ is an outer unit normal of $K$ at $x\in \partial K$. 

Since $K$ is convex, for $\mathcal{H}^{n-1}$ almost all $x\in \partial K$, the outer unit normal of $K$ at $x$ is unique. In this case, we use $\nu_K$ to denote the Gauss map that takes $x\in \partial K$ to its unique outer unit normal. Therefore, the map $\nu_K$ is almost everywhere defined on $\partial K$. We use $\nu_K^{-1}$ to denote the inverse Gauss map. Since $K$ is not assumed to be strictly convex, the map $\nu_K^{-1}$ is set-valued map and for each set $\eta\subset \sn$, we have
\begin{equation*}
	\nu_K^{-1}(\eta) = \{x\in \partial K: \text{there exists } v\in \eta \text{ such that } v \text{ is an outer unit normal at }x\}.
\end{equation*} 

Occasionally, for simplicity, we will sometimes use the following renormalization of the Gauss and inverse Gauss map. 

For those $u\in \sn$ such that $\nu_K$ is well-defined at $\rho_K(u)u\in \partial K$, we write $\alpha_K(u)$ for $ \nu_K(\rho_K(u)u)$. 

Let $\eta \subset \sn$ be a Borel set. The reverse radial Gauss image of $K$, denoted by $\alpha_K^*(\eta)$, is defined to be the set of all radial directions such that the corresponding boundary points have at least one outer unit normal in $\eta$, i.e.,
\begin{equation*}
\alpha_K^*(\eta) = \{u\in \sn: v\cdot u\rho_K(u) = h_K(v) \text{ for some } v \in \eta\}.
\end{equation*}
When $\eta = \{v\}$ is a singleton, we usually write $\alpha_K^*(v)$ instead of the more cumbersome notation $\alpha_K^*(\{v\})$. It follows from Theorem 2.2.11 in \cite{schneider2014} that for $\mathcal{H}^{n-1}$ almost all $v\in \sn$, the set $\alpha_K^*(\eta)$ contains only a singleton. Thus, we will sometimes treat $\alpha_K^*$ as an almost everywhere defined map on $\sn$ when no confusion arises. 

We recall that by Lemma 2.2 in \cite{HLYZ} that if $K_i$ converges to $K_0\in \mathcal{K}_o^n$ in Hausdorff metric, then $\alpha_{K_i}$ converges to $\alpha_{K_0}$ almost everywhere on $\sn$ with respect spherical Lebesgue measure.

\section{Gaussian surface area measure and the Gaussian Minkowski problem}

The purpose of this section is to introduce Gaussian surface area measure and the Gaussian Minkowski problem, and prove some basic properties as well as basic statements made in the Introduction but were not proved there. The authors would like to point out that these definitions and properties have already appeared in previous literatures (for example, \cite{MR3882970, MR4040624}) and are only included in this paper for the sake of completeness. With the exception of Theorem \ref{thm solution to the normalized problem}, we take no credit for the other results presented in this section.

We define the following Borel measure on $\sn$ and refer to it as \emph{Gaussian surface area measure}.

\begin{defi}
	Let $K\in \mathcal{K}_o^n$. The Gaussian surface area measure of $K$, denoted by $S_{\gamma_n, K}$, is a Borel measure on $\sn$ given by 
	\begin{equation}
	\label{eq local 7001}
			S_{\gamma_n, K}(\eta) = \frac{1}{(\sqrt{2\pi})^n}\int_{\nu_K^{-1}(\eta)} e^{-\frac{|x|^2}{2}}d\mathcal{H}^{n-1}(x),
	\end{equation}
	for each Borel measurable $\eta\subset \sn$.
\end{defi}

We will need the following lemma.
\begin{lemma}
\label{lemma local 10}
	Let $K\in \mathcal{K}_o^n$ and $f\in C({\sn})$. Suppose $\delta>0$ is sufficiently small so that for each $t\in (-\delta,\delta)$, we have
	\begin{equation*}
		h_t=h_{K}+tf>0.
	\end{equation*} 
	Then,
	\begin{equation*}
		\lim_{t\rightarrow 0} \frac{\rho_{[h_t]}(u)-\rho_K(u)}{t}=\frac{f(\alpha_K(u))}{h_K({\alpha_K}(u))}\rho_K(u)
	\end{equation*}
	for almost all $u\in\sn$ with respect to spherical Lebesgue measure. Moreover, there exists $M>0$, such that
	\begin{equation*}
		|\rho_{[h_t]}(u)-\rho_K(u)|<M|t|,
	\end{equation*}
	for all $u\in \sn$ and $t\in (-\delta,\delta)$. 
\end{lemma}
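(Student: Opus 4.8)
The plan is to first establish the uniform Lipschitz bound, then use it together with known convergence properties of the radial and Gauss maps to pass to the limit pointwise almost everywhere. Fix $\delta > 0$ as in the statement, so that $h_t = h_K + tf$ is a positive function on $\sn$ for all $t \in (-\delta,\delta)$; we may shrink $\delta$ further so that $h_t$ stays uniformly bounded away from $0$ and from above, say $c_0 \le h_t \le C_0$ on $\sn$ for all such $t$. The Wulff shape $[h_t]$ then satisfies $[h_t] \in \ko$ and, since $h_{[h_t]} \le h_t$, it is contained in a fixed ball; conversely, a standard estimate gives a uniform lower bound on the inradius of $[h_t]$. Consequently all the bodies $[h_t]$, together with $K = [h_0]$, lie in a fixed compact family in $\ko$, so their radial functions are uniformly bounded: $r_0 \le \rho_{[h_t]} \le R_0$.

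For the uniform Lipschitz estimate, I would argue geometrically. Since $h_0 - |t|\,\|f\|_\infty \le h_t \le h_0 + |t|\,\|f\|_\infty$ pointwise on $\sn$, the inclusions of Wulff shapes give $[\,h_0 - |t|\,\|f\|_\infty\,] \subset [h_t] \subset [\,h_0 + |t|\,\|f\|_\infty\,]$. Now $[\,h_0 \pm |t|\,\|f\|_\infty\,]$ can be compared with a dilation/translation-type perturbation of $K$: using that $K$ contains a ball $\rho B$ and is contained in a ball $RB$, one shows $(1 - c_1|t|)K \subset [h_t] \subset (1 + c_2|t|)K$ for constants $c_1, c_2$ depending only on $\|f\|_\infty$, $\rho$, $R$ and $\delta$ (this is the standard fact that a uniformly small additive perturbation of the support function corresponds to a uniformly small multiplicative perturbation of the body when the body is bounded and bounded away from the origin). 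Taking radial functions in direction $u$ yields $(1-c_1|t|)\rho_K(u) \le \rho_{[h_t]}(u) \le (1+c_2|t|)\rho_K(u)$, hence $|\rho_{[h_t]}(u) - \rho_K(u)| \le \max\{c_1,c_2\} R_0 |t| =: M|t|$ for all $u \in \sn$ and $t \in (-\delta,\delta)$. This proves the second assertion.

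For the pointwise derivative, fix $u \in \sn$ for which $\alpha_K(u)$ is well-defined, i.e. the boundary point $\rho_K(u)u$ has a unique outer unit normal $v := \alpha_K(u)$; this holds for $\mathcal H^{n-1}$-a.e. $u$. Write $x_t = \rho_{[h_t]}(u)\,u \in \partial[h_t]$. At such a point, the supporting hyperplane relation gives $x_t \cdot w \le h_t(w)$ for all $w$, with equality for some normal $w = v_t$; in particular $\rho_{[h_t]}(u)\,(u\cdot v_t) = h_t(v_t)$, and likewise $\rho_K(u)\,(u\cdot v) = h_K(v)$. Using the uniform bound just proved, $x_t \to \rho_K(u)u$, and since $K$ has a unique normal there, the convergence $v_t \to v$ follows (this is exactly the continuity statement underlying Lemma 2.2 of \cite{HLYZ}, that $\alpha_{[h_t]} \to \alpha_K$ a.e.; one must check it applies along the one-parameter family $[h_t]$, which follows since $[h_t] \to K$ in Hausdorff metric). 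Then from $\rho_{[h_t]}(u)(u\cdot v_t) = h_K(v_t) + t f(v_t)$ and $\rho_K(u)(u\cdot v) = h_K(v)$ one subtracts, divides by $t$, and lets $t \to 0$. The term $t f(v_t)/t \to f(v)$; the difference quotient of $h_K$ along $v_t \to v$ must be handled using that $h_K$ is a support function (hence its one-sided directional behavior is controlled) — more cleanly, one uses $x_t \cdot v \le h_K(v)$ and $\rho_K(u) u \cdot v_t \le h_K(v_t)$ as one-sided inequalities sandwiching the difference quotient, and the limit $v_t \to v$ forces $\lim_{t\to 0}(\rho_{[h_t]}(u)-\rho_K(u))(u\cdot v)/t = f(v)$. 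Since $u \cdot v = u\cdot \alpha_K(u) = h_K(\alpha_K(u))/\rho_K(u)$, solving gives
\[
\lim_{t\to 0}\frac{\rho_{[h_t]}(u) - \rho_K(u)}{t} = \frac{f(\alpha_K(u))}{h_K(\alpha_K(u))}\,\rho_K(u),
\]
as claimed.

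The main obstacle is the pointwise limit: specifically, justifying that the difference quotients of $h_K$ evaluated at the moving normals $v_t$ behave well, given that $h_K$ need not be differentiable and that $v_t$ moves as $t \to 0$. The cleanest route is the sandwiching argument above using only the defining inequalities of support/radial functions (which require no smoothness), combined with the a.e. convergence $v_t = \alpha_{[h_t]}(u) \to \alpha_K(u)$ quoted from \cite{HLYZ}; the uniform Lipschitz bound from the second part is what makes this convergence usable since it pins down $x_t \to \rho_K(u)u$ at a controlled rate.
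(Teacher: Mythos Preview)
Your argument is correct and essentially self-contained, but it differs from the paper's proof. The paper simply invokes Lemmas~2.8 and~4.1 of \cite{HLYZ}, which already establish the analogous result for \emph{logarithmic} perturbations of the support function, and then observes that the additive perturbation $h_t = h_K + tf$ can be rewritten as a logarithmic one via $\log h_t = \log h_K + t\,\frac{f}{h_K} + o(t)$; the claimed derivative and Lipschitz bound then drop out of the cited lemmas. Your route instead proves both assertions from scratch: the Lipschitz bound via the inclusions $(1-c_1|t|)K \subset [h_t] \subset (1+c_2|t|)K$, and the pointwise limit via the sandwiching inequalities coming from $x_t \cdot v \le h_t(v)$ and $\rho_K(u)u \cdot v_t \le h_K(v_t)$, combined with the observation that any normal $v_t$ at $x_t$ must accumulate to the unique normal $v=\alpha_K(u)$. (Note the small slip in your write-up: the first inequality should read $x_t \cdot v \le h_t(v)$, not $\le h_K(v)$; the extra $tf(v)$ term is exactly what produces the limit.) Your approach has the advantage of being elementary and transparent, avoiding the black-box citation; the paper's approach is shorter and makes clear that the result is really a reformulation of the known logarithmic variational formula.
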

\begin{proof}
	The desired result follows immediately from Lemmas 2.8 and 4.1 in \cite{HLYZ} and also the fact that
	\begin{equation*}
		\log h_t = \log h_K + t\frac{f}{h_K} + o(t).
	\end{equation*}
\end{proof}

The following variational formula gives rise to the corresponding surface area measure in the Gaussian probability space and therefore justifies the name \emph{Gaussian surface area measure}. The proof is an adaptation of the variational formula obtained in \cite{HLYZ}.
\begin{theorem}
\label{thm variational formula}
	Let $K\in \mathcal{K}_o^n$ and $f\in C(S^{n-1})$. Then,
	\begin{equation}
	\label{eq local 7002}
		\lim_{t\rightarrow 0} \frac{\gamma_n([h_K+tf])-\gamma_n(K)}{t} = \int_{\sn } f dS_{\gamma_n, K}.
	\end{equation}
\end{theorem}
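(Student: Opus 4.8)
The plan is to express the Gaussian volume in polar coordinates and then differentiate under the integral sign, the difference quotients of the radial functions being controlled by Lemma~\ref{lemma local 10}. Writing $h_t=h_K+tf$ for $|t|<\delta$ and integrating in polar coordinates centered at the origin,
\begin{equation*}
	\gamma_n([h_t])=\frac{1}{(\sqrt{2\pi})^n}\int_{\sn}\int_0^{\rho_{[h_t]}(u)}e^{-\frac{r^2}{2}}r^{n-1}\,dr\,du,
\end{equation*}
so that
\begin{equation*}
	\frac{\gamma_n([h_t])-\gamma_n(K)}{t}=\frac{1}{(\sqrt{2\pi})^n}\int_{\sn}\left(\frac{1}{t}\int_{\rho_K(u)}^{\rho_{[h_t]}(u)}e^{-\frac{r^2}{2}}r^{n-1}\,dr\right)du.
\end{equation*}
First I would fix a $u\in\sn$ at which the conclusion of Lemma~\ref{lemma local 10} holds (this excludes only a spherical-Lebesgue-null set). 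Since $r\mapsto e^{-r^2/2}r^{n-1}$ is continuous and $\rho_{[h_t]}(u)\to\rho_K(u)$ as $t\to 0$, the fundamental theorem of calculus together with the first part of Lemma~\ref{lemma local 10} yields
\begin{equation*}
	\lim_{t\to 0}\frac{1}{t}\int_{\rho_K(u)}^{\rho_{[h_t]}(u)}e^{-\frac{r^2}{2}}r^{n-1}\,dr=e^{-\frac{\rho_K(u)^2}{2}}\rho_K(u)^{n-1}\cdot\frac{f(\alpha_K(u))}{h_K(\alpha_K(u))}\rho_K(u).
\end{equation*}

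Next I would pass to the limit inside the $u$-integral by dominated convergence. The function $r\mapsto e^{-r^2/2}r^{n-1}$ is bounded on $[0,\infty)$ by some constant $C$, and by the second part of Lemma~\ref{lemma local 10} there is an $M>0$ with $|\rho_{[h_t]}(u)-\rho_K(u)|<M|t|$ for all $u\in\sn$ and $|t|<\delta$; hence the inner quantity above is bounded in absolute value by $CM$ for all such $u$ and $t$, which is integrable over $\sn$. Therefore
\begin{equation*}
	\lim_{t\to 0}\frac{\gamma_n([h_t])-\gamma_n(K)}{t}=\frac{1}{(\sqrt{2\pi})^n}\int_{\sn}e^{-\frac{\rho_K(u)^2}{2}}\rho_K(u)^{n}\,\frac{f(\alpha_K(u))}{h_K(\alpha_K(u))}\,du.
\end{equation*}

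It remains to identify the right-hand side with $\int_{\sn}f\,dS_{\gamma_n,K}$. For this I would use the radial parametrization $u\mapsto\rho_K(u)u$ of $\partial K$: for $\mathcal{H}^{n-1}$-a.e.\ boundary point one has $\nu_K(\rho_K(u)u)=\alpha_K(u)$ and the Jacobian identity
\begin{equation*}
	d\mathcal{H}^{n-1}(\rho_K(u)u)=\frac{\rho_K(u)^{n-1}}{u\cdot\alpha_K(u)}\,du,
\end{equation*}
while the supporting-hyperplane relation for $x=\rho_K(u)u\in\partial K$ with outer normal $\alpha_K(u)$ reads $\rho_K(u)\,(u\cdot\alpha_K(u))=h_K(\alpha_K(u))$. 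Substituting these into the definition~\eqref{eq local 7001} of $S_{\gamma_n,K}$ and changing variables gives, for every $g\in C(\sn)$,
\begin{equation*}
	\int_{\sn}g\,dS_{\gamma_n,K}=\frac{1}{(\sqrt{2\pi})^n}\int_{\sn}g(\alpha_K(u))\,e^{-\frac{\rho_K(u)^2}{2}}\,\frac{\rho_K(u)^{n-1}}{u\cdot\alpha_K(u)}\,du=\frac{1}{(\sqrt{2\pi})^n}\int_{\sn}g(\alpha_K(u))\,e^{-\frac{\rho_K(u)^2}{2}}\,\frac{\rho_K(u)^{n}}{h_K(\alpha_K(u))}\,du,
\end{equation*}
and choosing $g=f$ matches the formula obtained above, which proves~\eqref{eq local 7002}.

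The step I expect to be the main obstacle is the last one: justifying the change of variables between the boundary integral in~\eqref{eq local 7001} and the spherical integral in radial coordinates. Since $K$ is only assumed to be a convex body, neither $\nu_K$ nor its radial version $\alpha_K$ is everywhere defined and $\partial K$ need not be $C^1$, so one must argue almost everywhere. This is handled exactly as in \cite{HLYZ} (the a.e.\ differentiability of $\partial K$, the a.e.\ bijectivity of $u\mapsto\rho_K(u)u$ between $\sn$ and $\partial K$, and the associated Jacobian formula), together with the measure-theoretic facts recalled in Section~2 concerning $\nu_K^{-1}$, $\alpha_K$, and $\alpha_K^*$. Everything else—the polar-coordinate bound and the dominated convergence step—is routine once Lemma~\ref{lemma local 10} is available.
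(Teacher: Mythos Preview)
Your proof is correct and follows essentially the same route as the paper's: polar coordinates, Lemma~\ref{lemma local 10} for the pointwise derivative and the uniform Lipschitz bound, dominated convergence, and finally the radial change of variables $u\mapsto\rho_K(u)u$ to identify the limit with $\int_{\sn}f\,dS_{\gamma_n,K}$. The only cosmetic difference is that the paper bounds the difference quotient via the mean value theorem applied to $F(s)=\int_0^s e^{-r^2/2}r^{n-1}\,dr$ after first confining $[h_t]$ to a fixed ball, whereas you use the global boundedness of $e^{-r^2/2}r^{n-1}$ on $[0,\infty)$ directly; both yield the same integrable dominator.
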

\begin{proof}
	Write $h_t= h_K+tf$. Using polar coordinates, we have
	\begin{equation*}
		\gamma_n([h_t]) = \frac{1}{(\sqrt{2\pi})^n}\int_{\sn}\int_{0}^{\rho_{[h_t]}(u)}e^{-\frac{r^2}{2}} r^{n-1}drdu. 
	\end{equation*}
	
	Since $K\in \mathcal{K}_o^n$ and $f\in C(\sn)$, for $t$ close to $0$, there exists $M_1>0$ such that $[h_t]\subset M_1B$.  Denote $F(s) = \int_{0}^{s}e^{-\frac{r^2}{2}} r^{n-1}dr$. By mean value theorem,
	\begin{equation*}
		|F(\rho_{[h_t]}(u)) - F(\rho_K(u))|\leq |F'(\theta)| |\rho_{[h_t]}(u)-\rho_K(u)|< M|F'(\theta)| |t|, 
	\end{equation*}
	where $M$ comes from Lemma \ref{lemma local 10} and $\theta$ is between $\rho_{[h_t]}(u)$ and $\rho_{K}(u)$. Since $[h_t]\subset M_1B$, we have $\theta\in (0,M_1]$. Therefore, by definition of $F$, we have $|F'(\theta)|$ is bounded from above by some constant that depends on $M_1$. Therefore, there exists $M_2>0$ such that 
	\begin{equation*}
		|F(\rho_{[h_t]}(u)) - F(\rho_K(u))|\leq M_2|t|.
	\end{equation*}
	Using dominated convergence theorem, together with Lemma \ref{lemma local 10}, we have
	\begin{equation*}
	\begin{aligned}
		\lim_{t\rightarrow 0} \frac{\gamma_n([h_K+tf])-\gamma_n(K)}{t} &= \frac{1}{(\sqrt{2\pi})^n} \int_\sn f(\alpha_K(u))e^{-\frac{\rho_K(u)^2}{2}}\frac{\rho_K(u)^n}{h_K(\alpha_K(u))}du\\
		& = \frac{1}{(\sqrt{2\pi})^n} \int_{\partial K} f(\nu_K(x))e^{-\frac{|x|^2}{2}}d\mathcal{H}^{n-1}(x)\\
		& = \frac{1}{(\sqrt{2\pi})^n} \int_{\sn} fdS_{\gamma_n, K}.
	\end{aligned}
	\end{equation*}
\end{proof}

The Gaussian surface area measure is weakly convergent with respect to Hausdorff metric.
\begin{theorem}
	Let $K_i\in \mathcal{K}_o^n$ such that $K_i$ converges to $K_0\in \mathcal{K}_o^n$ in Hausdorff metric. Then $S_{\gamma_n, K_i}$ converges to $S_{\gamma_n, K_0}$ weakly.
\end{theorem}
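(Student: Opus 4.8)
The plan is to verify weak convergence directly from the definition: fix an arbitrary $g\in C(\sn)$ and show that $\int_\sn g\,dS_{\gamma_n,K_i}\to\int_\sn g\,dS_{\gamma_n,K_0}$. The essential tool is the change-of-variables identity already extracted inside the proof of Theorem \ref{thm variational formula}: for every $K\in\mathcal{K}_o^n$ and every $g\in C(\sn)$,
\begin{equation*}
	\int_\sn g\,dS_{\gamma_n,K}=\frac{1}{(\sqrt{2\pi})^n}\int_\sn g(\alpha_K(u))\,e^{-\frac{\rho_K(u)^2}{2}}\,\frac{\rho_K(u)^n}{h_K(\alpha_K(u))}\,du,
\end{equation*}
which expresses $S_{\gamma_n,K}$ as the image of $(\sqrt{2\pi})^{-n}e^{-|x|^2/2}\,d\mathcal H^{n-1}|_{\partial K}$ under the Gauss map, transported to the sphere via $u\mapsto\rho_K(u)u$. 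Once this identity is in hand, the whole problem is reduced to passing to the limit under an integral over the \emph{fixed} domain $\sn$.

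First I would collect the uniform bounds needed to invoke dominated convergence. Since $K_i\to K_0$ in the Hausdorff metric and $K_0\in\mathcal{K}_o^n$, there are constants $0<r<R$ and an index $i_0$ with $rB\subseteq K_i\subseteq RB$ for all $i\ge i_0$; hence $h_{K_i}\ge r$ and $\rho_{K_i}\le R$ on $\sn$ for $i\ge i_0$. Consequently, for $i\ge i_0$ the integrand in the representation above is bounded in absolute value by the constant $(\sqrt{2\pi})^{-n}\|g\|_{C(\sn)}R^{n}/r$, independently of $i$ and of $u$.

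Next I would establish the pointwise almost-everywhere convergence of the integrands. Hausdorff convergence gives $h_{K_i}\to h_{K_0}$ and $\rho_{K_i}\to\rho_{K_0}$ uniformly on $\sn$, and, by Lemma 2.2 of \cite{HLYZ} (recalled at the end of Section 2), $\alpha_{K_i}\to\alpha_{K_0}$ almost everywhere on $\sn$ with respect to spherical Lebesgue measure. For such a $u$, continuity of $g$ yields $g(\alpha_{K_i}(u))\to g(\alpha_{K_0}(u))$; uniform convergence of $\rho_{K_i}$ yields $e^{-\rho_{K_i}(u)^2/2}\rho_{K_i}(u)^{n}\to e^{-\rho_{K_0}(u)^2/2}\rho_{K_0}(u)^{n}$; and the splitting
\begin{equation*}
	|h_{K_i}(\alpha_{K_i}(u))-h_{K_0}(\alpha_{K_0}(u))|\le\|h_{K_i}-h_{K_0}\|_{C(\sn)}+|h_{K_0}(\alpha_{K_i}(u))-h_{K_0}(\alpha_{K_0}(u))|,
\end{equation*}
together with continuity of $h_{K_0}$, shows $h_{K_i}(\alpha_{K_i}(u))\to h_{K_0}(\alpha_{K_0}(u))$; since these denominators stay $\ge r>0$, the quotients converge as well. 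Thus the integrands converge a.e. to the integrand associated with $K_0$, and the dominated convergence theorem gives $\int_\sn g\,dS_{\gamma_n,K_i}\to\int_\sn g\,dS_{\gamma_n,K_0}$, which is precisely the asserted weak convergence.

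The only genuinely non-elementary ingredient is the a.e.\ convergence $\alpha_{K_i}\to\alpha_{K_0}$ of the radial Gauss maps --- this is where the convex geometry really enters --- but it has already been supplied by \cite{HLYZ} and recorded in Section 2, so the remaining work is just the bookkeeping of uniform bounds and a routine application of dominated convergence. The one technical point deserving care is that both the map $\alpha_{K_i}$ and the function $h_{K_0}$ appearing in the denominator vary with $i$, which is why the triangle-inequality splitting above is needed rather than plain continuity of a single function.
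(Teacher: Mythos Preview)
Your proof is correct and follows essentially the same route as the paper: both transport the integral to $\sn$ via the radial parametrization, invoke the a.e.\ convergence $\alpha_{K_i}\to\alpha_{K_0}$ from \cite{HLYZ}, and pass to the limit by dominated convergence using the uniform two-sided bounds $rB\subset K_i\subset RB$. The only cosmetic difference is that the paper rewrites the denominator via the identity $h_{K_i}(\alpha_{K_i}(u))=\rho_{K_i}(u)\,(u\cdot\alpha_{K_i}(u))$, reducing it to $u\cdot\alpha_{K_i}(u)$, which depends on $i$ only through $\alpha_{K_i}$ and so converges a.e.\ directly; your triangle-inequality splitting accomplishes the same thing.
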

\begin{proof}
	Note that since $K_0\in \mathcal K_o^n$, there exists $C>0$ such that $\frac{1}{C}B \subset K_i\subset CB$ for sufficiently large $i$. Therefore, we have
	\begin{equation}
	\label{eq local 9090}
		e^{-\frac{\rho_{K_i}(u)^2}{2}} \rho_{K_i}^{n-1}(u) \rightarrow e^{-\frac{\rho_{K_0}(u)^2}{2}} \rho_{K_0}^{n-1}(u) \qquad\text{uniformly on }\sn.
	\end{equation}
	Let $g\in C(\sn)$. Since $\alpha_{K_i}\rightarrow \alpha_{K_0}$ almost everywhere on $\sn$ with respect to spherical Lebesgue measure, we also have
	\begin{equation}
	\label{eq local 9091}
		\frac{g(\alpha_{K_i}(u))}{u\cdot \alpha_{K_{i}}(u)} \rightarrow \frac{g(\alpha_{K_0}(u))}{u\cdot \alpha_{K_{0}}(u)}\qquad \text{almost everywhere on }\sn.
	\end{equation}
	
	By definition of $S_{\gamma_n,K}$, we have
	\begin{equation*}
	\begin{aligned}
			\int_{\sn} gdS_{\gamma_n, K_i} &= \frac{1}{(\sqrt{2\pi})^n} \int_{\partial K_i} g(\nu_{K_i}(x))e^{-\frac{|x|^2}{2}}d \mathcal{H}^{n-1}(x)\\ 
			& =\frac{1}{(\sqrt{2\pi})^n} \int_{\sn} g(\alpha_{K_i}(u)) e^{-\frac{\rho_{K_i}(u)^2}{2}} \frac{\rho_{K_i}(u)^n}{h_{K_i}(\alpha_{K_i}(u))}du\\
			& = \frac{1}{(\sqrt{2\pi})^n} \int_{\sn} \frac{g(\alpha_{K_i}(u))}{u\cdot \alpha_{K_i}(u)} e^{-\frac{\rho_{K_i}(u)^2}{2}} {\rho_{K_i}(u)^{n-1}}du\\
			& \rightarrow \frac{1}{(\sqrt{2\pi})^n} \int_{\sn} \frac{g(\alpha_{K_0}(u))}{u\cdot \alpha_{K_0}(u)} e^{-\frac{\rho_{K_0}(u)^2}{2}} {\rho_{K_0}(u)^{n-1}}du\\
			& = \int_{\sn} gdS_{\gamma_n, K_0},
	\end{aligned}
	\end{equation*}
	where the limit is due to \eqref{eq local 9090} and \eqref{eq local 9091}.
\end{proof}

By a simple calculation, it follows from the definition of Gaussian surface area measure that if $K\in \mathcal{K}_o^n$ is convex, then $S_{\gamma_n, K}$ is absolutely continuous with respect to surface area measure and 
\begin{equation*}
	dS_{\gamma_n, K } = \frac{1}{(\sqrt{2\pi})^n} e^{-\frac{|\nabla h_K|^2+h_K^2}{2}} dS_K.
\end{equation*}
If, in addition, the body $K$ is $C^2$, then $S_{\gamma_n, K}$ is absolutely continuous with respect to spherical Lebesgue measure and 
\begin{equation}
\label{eq local 9093}
	dS_{\gamma_n, K }(v)= \frac{1}{(\sqrt{2\pi})^n} e^{-\frac{|\nabla h_K|^2+h_K^2}{2}} \det (\nabla^2 h_K+h_KI)dv.
\end{equation}

When $P\in \mathcal{K}_o^n$ is a polytope with unit normal vectors $v_i$ with the corresponding faces $F_i$, the Gaussian surface area measure $S_{\gamma_n, P}$ is a discrete measure given by
\begin{equation*}
	S_{\gamma_n, P}(\cdot) = \sum_{i=1}^N c_i \delta_{v_i}(\cdot),  
\end{equation*}
where $c_i$ is given by 
\begin{equation*}
	c_i=\frac{1}{(\sqrt{2\pi})^n}\int_{F_i} e^{-\frac{|x|^2}{2}}d\mathcal{H}^{n-1}(x).
\end{equation*}

The classical Minkowski problem asks for the existence, uniqueness and regularity of a convex body $K$ whose surface area measure is prescribed. It has played a fundamental role, not only in convex geometric analysis, but also in PDE, differential geometry, functional analysis. Given this, it is natural to study the corresponding problem for Gaussian surface area measure, which we refer to as \emph{the Gaussian Minkowski problem}.

\textbf{The Gaussian Minkowski problem.} Given a finite Borel measure $\mu$, what are the necessary and sufficient conditions on $\mu$ so that there exists a convex body $K$ with $o\in \text{int}\,K$ such that
\begin{equation*}
	\mu  = S_{\lambda_n, K}?
\end{equation*}
If $K$ exists, to what extent is it unique?

It follows from \eqref{eq local 9093} that if $\mu = fdv$, then the Gaussian Minkowski problem is equivalent to the study of the following Monge-Amp\`{e}re type equation on $\sn$:
\begin{equation*}
	\frac{1}{(\sqrt{2\pi})^n} e^{-\frac{|\nabla h|^2+h^2}{2}}\det (\nabla^2 h+hI) = f.
\end{equation*}

It is well-known that the classical surface area measure $S_K$ when viewed as a map from the set of convex bodies to the set of Borel measures on $\sn$ is a valuation. In fact, Haberl-Parapatits \cite{MR3176613} gave a valuation characterization of surface area measure. Valuation theory plays an important role in convex geometry, see, \emph{e.g.}, \cite{MR2966660, MR2772547, MR2680490,MR2668553}. Similar to the proof for that of surface area measure, it is not hard to see that Gaussian surface area measure is also a valuation; that is, if $K$ and $L$ are two convex bodies such that $K\cup L$ is also a convex body, then
\begin{equation*}
	S_{\gamma_n, K\cup L} + S_{\gamma_n, K\cap L} = S_{\gamma_n, K}+S_{\gamma_n, L}.
\end{equation*}
It is of great interest to see if there is a valuation characterization of Gaussian surface area measure.

\subsection{The normalized problem and its solution}
\label{subsection normalized solution}

Motivated by the work of Haberl-Lutwak-Yang-Zhang \cite{MR2652213}, we derive the solution to the following normalized version of the even Gaussian Minkowski problem.

\begin{theorem}
\label{thm solution to the normalized problem}
	Suppose $\mu$ is an finite even Borel measure not concentrated in any closed hemisphere. Then, for each $0<\alpha<\frac{1}{n}$, there exists an $o$-symmetric convex body $K$ such that 
	\begin{equation}
	\label{eq local 000011}
		\mu = \frac{S_{\gamma_n, K}}{\gamma_n(K)^{1-\alpha}}.
	\end{equation}
\end{theorem}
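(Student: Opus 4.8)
The plan is to adapt the variational method of Haberl--Lutwak--Yang--Zhang \cite{MR2652213}. For $Q\in\ke$, set
\[
\Phi(Q)=\int_{\sn}h_Q\,d\mu-\frac1\alpha\,\gamma_n(Q)^\alpha .
\]
I would show that $\Phi$ attains its infimum over $\ke$ at some body $K$, that this $K$ contains the origin in its interior, and that the Euler--Lagrange equation satisfied by $K$ is exactly \eqref{eq local 000011}. The constant $\gamma_n(K)^{1-\alpha}$ then appears automatically upon applying the chain rule to $\gamma_n(Q)^\alpha$ together with the variational formula of Theorem \ref{thm variational formula}; note $\gamma_n(K)\in(0,1)$, so the constant is finite and positive.

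\emph{Existence of a minimizer.} First, $\Phi$ is bounded below on $\ke$: since $h_Q\ge 0$ and $\gamma_n(Q)\le 1$, we have $\Phi(Q)\ge-1/\alpha$. Second --- and this is where the restriction $\alpha<1/n$ is used --- $\inf_{\ke}\Phi<0$: testing with dilates of the unit ball $B$, one has $\int_{\sn}h_{\lambda B}\,d\mu=\lambda|\mu|$ while $\gamma_n(\lambda B)\ge c_1\lambda^n$ for some $c_1>0$ and all $\lambda\in(0,1]$, so $\Phi(\lambda B)\le\lambda|\mu|-\tfrac1\alpha c_1^{\alpha}\lambda^{n\alpha}$, and since $n\alpha<1$ the negative term dominates as $\lambda\to0^+$, forcing $\Phi(\lambda B)<0$ for small $\lambda$. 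Third, $\Phi$ is coercive along minimizing sequences: since $\mu$ is not concentrated in any closed hemisphere, $c_\mu:=\min_{v\in\sn}\int_{\sn}|u\cdot v|\,d\mu(u)>0$, and choosing $x_Q\in Q$ with $|x_Q|=R(Q):=\max_{x\in Q}|x|$ (whence $\pm x_Q\in Q$) gives $\int_{\sn}h_Q\,d\mu\ge c_\mu R(Q)$; thus a minimizing sequence $Q_j$ satisfies $R(Q_j)\le c_\mu^{-1}(\Phi(Q_j)+1/\alpha)$, which is bounded. By the Blaschke selection theorem a subsequence of $Q_j$ converges in the Hausdorff metric to an origin-symmetric compact convex set $K$. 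Since $h_{Q_j}\to h_K$ uniformly and $\mathbf 1_{Q_j}\to\mathbf 1_K$ off the Gaussian-null set $\partial K$, dominated convergence gives $\Phi(Q_j)\to\Phi(K)=\inf_{\ke}\Phi<0$; in particular $\gamma_n(K)>0$, so $K$ has nonempty interior and hence $K\in\ke$.

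\emph{The Euler--Lagrange equation.} Fix $f\in C_e(\sn)$. For $|t|$ small, $h_K+tf>0$, so $[h_K+tf]\in\ke$ is a competitor and $h_{[h_K+tf]}\le h_K+tf$. Writing
\[
\widetilde\Phi(t)=\int_{\sn}(h_K+tf)\,d\mu-\frac1\alpha\,\gamma_n([h_K+tf])^\alpha,
\]
we get $\widetilde\Phi(t)\ge\Phi([h_K+tf])\ge\Phi(K)=\widetilde\Phi(0)$, so $t=0$ minimizes $\widetilde\Phi$. By Theorem \ref{thm variational formula}, $t\mapsto\gamma_n([h_K+tf])$ is differentiable at $0$ with derivative $\int_{\sn}f\,dS_{\gamma_n,K}$, and since $\gamma_n(K)>0$ the chain rule yields
\[
0=\widetilde\Phi'(0)=\int_{\sn}f\,d\mu-\gamma_n(K)^{\alpha-1}\int_{\sn}f\,dS_{\gamma_n,K}.
\]
As $K$ is origin-symmetric, $S_{\gamma_n,K}$ is even, so this identity, valid for all even $f$, extends to all $f\in C(\sn)$ by splitting into even and odd parts. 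Hence $\mu=\gamma_n(K)^{\alpha-1}S_{\gamma_n,K}$, i.e. \eqref{eq local 000011}.

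The main obstacle I anticipate is proving that the minimizer does not degenerate --- that a minimizing sequence cannot collapse to the origin. This is precisely controlled by the strict inequality $\inf_{\ke}\Phi<0$, which in turn rests on the sharp hypothesis $0<\alpha<1/n$: in the Gaussian setting, with no homogeneity to exploit, the balance between the linear growth of $\int_{\sn}h_Q\,d\mu$ and the $\lambda^{n\alpha}$-behaviour of $\gamma_n(\lambda B)^\alpha$ near the origin is the crux of the argument. The discrepancy $h_{[h_K+tf]}\le h_K+tf$ between a Wulff shape and its defining function is the other technical point, handled above by the squeeze with $\widetilde\Phi$.
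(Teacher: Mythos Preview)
Your proposal is correct and follows essentially the same variational approach as the paper: the paper maximizes $\Gamma(f)=\frac{1}{\alpha}\gamma_n([f])^\alpha-\int_{\sn}f\,d\mu$ over $C_e^+(\sn)$, which is just $-\Phi$ in your notation, and the three key ingredients---the small-ball test exploiting $n\alpha<1$ to force a strictly positive supremum (your strictly negative infimum), the non-concentration hypothesis to bound the maximizing sequence via $\int_{\sn}|u\cdot v|\,d\mu$, and the variational formula for the Euler--Lagrange equation---are identical. The only cosmetic difference is that the paper optimizes over $C_e^+(\sn)$ and then observes the optimizer must be a support function, whereas you restrict to $\ke$ from the outset and recover the same conclusion through your squeeze with $\widetilde\Phi$.
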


Our approach to the normalized problem is variational and involves the following optimization problem:
\begin{equation}
\label{eq local 00000}
	\sup \{ \Gamma(f): f\in C_e^+(\sn)\},
\end{equation}
where $\Gamma: C^+(\sn)\rightarrow \mathbb{R}$ is given by
\begin{equation*}
	\Gamma(f) = \frac{1}{\alpha}\gamma_n([f])^\alpha- \int_\sn fd\mu.
\end{equation*}
\begin{lemma}
\label{lemma local 00000}
	Let $0<\alpha<\frac{1}{n}$. If an even function $f_0$ is a maximizer to the optimization problem \eqref{eq local 00000}, then $f_0$ must be the support function of an $o$-symmetric convex body; that is, there exists an $o$-symmetric convex body $K_0$ such that $f_0=h_{K_0}$. Moreover, $K_0$ satisfies \eqref{eq local 000011}. 
\end{lemma}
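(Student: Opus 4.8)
The plan is to run the standard variational argument for Minkowski-type problems, with Theorem~\ref{thm variational formula} supplying the crucial first-variation formula. First I would set $K_0=[f_0]$ and verify that $K_0\in\ke$: evenness of $f_0$ forces $o$-symmetry of $K_0$, while continuity and positivity of $f_0$ on the compact sphere $\sn$ give constants $0<c_0\le C_0$ with $c_0 B\subseteq K_0\subseteq C_0 B$, so in particular $o\in\operatorname{int} K_0$ and $h_{K_0}\ge c_0>0$. This is precisely what is needed to apply Theorem~\ref{thm variational formula} with $K=K_0$.

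Next I would promote $f_0$ to a support function. Since $h_{K_0}\le f_0$ and $[h_{K_0}]=K_0=[f_0]$, the summand $\frac1\alpha\gamma_n([f])^\alpha$ of $\Gamma$ is unchanged on passing from $f_0$ to $h_{K_0}$, so
\[
\Gamma(h_{K_0})-\Gamma(f_0)=\int_{\sn}(f_0-h_{K_0})\,d\mu\ge 0 .
\]
As $h_{K_0}\in C_e^+(\sn)$ and $f_0$ is a maximizer, equality holds, hence $\int_{\sn}(f_0-h_{K_0})\,d\mu=0$; since $f_0-h_{K_0}\ge 0$ this gives $f_0=h_{K_0}$ $\mu$-a.e., and, crucially, $h_{K_0}$ is itself a maximizer. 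Replacing $f_0$ by $h_{K_0}$ (which changes neither $\Gamma$ nor the Wulff shape) we may assume $f_0=h_{K_0}$, which is the first assertion. For the Euler--Lagrange equation I would fix an even $g\in C(\sn)$; since $h_{K_0}\ge c_0>0$, the function $h_{K_0}+tg$ is positive and even, hence admissible, for all small $|t|$, so $\Gamma(h_{K_0}+tg)\le\Gamma(h_{K_0})$ and the $t$-derivative at $0$ vanishes. By Theorem~\ref{thm variational formula} and the chain rule (legitimate because $\gamma_n(K_0)>0$),
\[
0=\left.\frac{d}{dt}\right|_{t=0}\Gamma(h_{K_0}+tg)=\gamma_n(K_0)^{\alpha-1}\int_{\sn}g\,dS_{\gamma_n,K_0}-\int_{\sn}g\,d\mu .
\]
Thus $\int_{\sn}g\,d\mu=\gamma_n(K_0)^{\alpha-1}\int_{\sn}g\,dS_{\gamma_n,K_0}$ for every even $g\in C(\sn)$; since $\mu$ and $S_{\gamma_n,K_0}$ are both even measures, splitting an arbitrary $g\in C(\sn)$ into its even and odd parts promotes this to all $g$, yielding $\mu=\gamma_n(K_0)^{\alpha-1}S_{\gamma_n,K_0}=S_{\gamma_n,K_0}/\gamma_n(K_0)^{1-\alpha}$, i.e.~\eqref{eq local 000011}.

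The genuinely substantive step --- differentiating the Gaussian volume along Wulff shapes --- is already available in Theorem~\ref{thm variational formula}, so no real obstacle remains; the only points that deserve care are (i) confirming that $K_0=[f_0]$ has nonempty interior, so that the perturbations $h_{K_0}+tg$ stay positive and admissible, which rests only on continuity and positivity of $f_0$ on the compact sphere, and (ii) the routine upgrade from the $\mu$-a.e.\ identity $f_0=h_{K_0}$ to an honest identity of functions, handled by simply replacing $f_0$ with $h_{K_0}$. Finally, observe that the hypothesis $0<\alpha<\frac1n$ is not actually used in this lemma (it serves to guarantee existence of a maximizer in the first place); here one only invokes $\alpha>0$ together with $\gamma_n(K_0)>0$.
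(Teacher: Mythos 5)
Your proof is correct and follows the same variational approach as the paper: replace $f_0$ by the support function $h_{K_0}$ of its Wulff shape $K_0=[f_0]$ (using that this does not decrease $\Gamma$), then differentiate along perturbations $h_{K_0}+tg$ via Theorem~\ref{thm variational formula} to obtain the Euler--Lagrange identity $\mu = \gamma_n(K_0)^{\alpha-1}S_{\gamma_n,K_0}$. You are in fact a bit more careful than the paper's text at the first step, which displays $\Gamma(f)\geq\Gamma(h_{[f]})$ where the intended (and correct, as you derive) inequality is $\Gamma(f)\leq\Gamma(h_{[f]})$, i.e.\ passing from $f$ to $h_{[f]}$ cannot decrease $\Gamma$.
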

\begin{proof}
	Note that for each $f\in C_e^+(\sn)$, by the definition of Wulff shape, we have
	\begin{equation*}
		\Gamma(f)\geq \Gamma(h_{[f]}).
	\end{equation*}
	Therefore, the maximizer $f_0$ must be the support function of some $o$-symmetric convex body $K_0$. 
	
	We now use the variational formula \eqref{eq local 7002} to establish that $K_0$ satisfies \eqref{eq local 000011}.
	
	Towards this end, for each $g\in C_{e}^+(\sn)$, consider the one-parameter family 
	\begin{equation*}
		K_t= [h_{K_0}+tg].
	\end{equation*}
	Since $f_0=h_{K_0}$ is a maximizer, we have
	\begin{equation*}
		0 = \left.\frac{d}{dt}\right|_{t=0} \Gamma(h_{K_t}) = \gamma_n(K_0)^{\alpha-1} \int_{\sn} gdS_{\gamma_n, K_0}-\int_\sn gd\mu,
	\end{equation*}
	where in the second equality, we used the variational formula \eqref{eq local 7002}. Note that the above equation holds for every $g\in C_e^+(\sn)$. Therefore, we conclude that $K_0$ satisfies \eqref{eq local 000011}.
\end{proof}

For simplicity, when no confusion arises, we will write $\Gamma(K)$ in place of $\Gamma(h_K)$.

\begin{lemma}
\label{lemma local 00001}
	Let $0<\alpha<\frac{1}{n}$. For sufficiently small $r>0$, we have $\Gamma(rB)>0$. 
\end{lemma}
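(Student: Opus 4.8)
The plan is to evaluate $\Gamma$ directly on the constant function $h_{rB}\equiv r$ and compare the two competing powers of $r$. By definition of the Wulff shape, $[h_{rB}]=rB$, so
\begin{equation*}
	\Gamma(rB)=\frac1\alpha\gamma_n(rB)^\alpha-\int_{\sn}r\,d\mu=\frac1\alpha\gamma_n(rB)^\alpha-|\mu|\,r.
\end{equation*}
So everything reduces to a lower bound on $\gamma_n(rB)$ for small $r$ together with the elementary fact that $r^{n\alpha}$ beats $r$ near $0$ precisely because $\alpha<\frac1n$.

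First I would record the lower estimate on the Gaussian mass of a small ball. For $r\le 1$ one has $e^{-|x|^2/2}\ge e^{-1/2}$ on $rB$, hence
\begin{equation*}
	\gamma_n(rB)=\frac{1}{(\sqrt{2\pi})^n}\int_{rB}e^{-\frac{|x|^2}{2}}\,dx\ \ge\ \frac{e^{-1/2}}{(\sqrt{2\pi})^n}\,\mathcal H^n(B)\,r^n\ =:\ a_n r^n,
\end{equation*}
where $a_n>0$ depends only on $n$. (Equivalently one may just use $\gamma_n(rB)\sim c_n r^n$ as $r\to0^+$; only a one-sided bound is needed.)

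Next I would substitute this in and factor. For $0<r\le1$,
\begin{equation*}
	\Gamma(rB)\ \ge\ \frac{a_n^\alpha}{\alpha}\,r^{n\alpha}-|\mu|\,r\ =\ r^{n\alpha}\left(\frac{a_n^\alpha}{\alpha}-|\mu|\,r^{\,1-n\alpha}\right).
\end{equation*}
Since $\alpha<\frac1n$ gives $1-n\alpha>0$, the quantity $r^{1-n\alpha}\to0$ as $r\to0^+$, so the parenthetical factor is positive once $r$ is small enough (explicitly, once $|\mu|\,r^{1-n\alpha}<a_n^\alpha/\alpha$). Hence $\Gamma(rB)>0$ for all sufficiently small $r>0$, which is the claim.

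There is no genuine obstacle here; the only thing that matters is that the exponent hypothesis $0<\alpha<\frac1n$ is exactly what makes the volume term $\tfrac1\alpha\gamma_n(rB)^\alpha\asymp r^{n\alpha}$ dominate the linear term $|\mu|r$ as $r\to0^+$. (If $\mu=0$ the statement is trivial, and if $|\mu|>0$ the factoring above makes the dependence on $|\mu|$ explicit.) This positivity is what will later guarantee, in the proof of Theorem \ref{thm solution to the normalized problem}, that the supremum in \eqref{eq local 00000} is positive and therefore not attained by degenerate competitors.
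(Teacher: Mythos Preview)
Your proof is correct and follows essentially the same approach as the paper: both evaluate $\Gamma(rB)=\tfrac1\alpha\gamma_n(rB)^\alpha-|\mu|r$ and exploit that $\gamma_n(rB)^\alpha$ behaves like $r^{n\alpha}$ near zero, which dominates the linear term precisely when $\alpha<\tfrac1n$. The only cosmetic difference is that the paper factors out $r$ and applies a L'H\^{o}pital-type computation to the ratio $\int_0^r e^{-t^2/2}t^{n-1}\,dt\big/r^{1/\alpha}$, whereas you use the cleaner pointwise bound $e^{-|x|^2/2}\ge e^{-1/2}$ on $rB$ to get $\gamma_n(rB)\ge a_n r^n$ directly.
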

\begin{proof}
	By definition of $\Gamma$, we have
	\begin{equation*}
	\begin{aligned}
		\Gamma(rB) &= \frac{1}{\alpha} \left(\frac{1}{(2\pi)^\frac{n}{2}}\int_{rB}e^{-\frac{|x|^2}{2}}dx\right)^\alpha - \int_{\sn }rd\mu\\
		&= \frac{1}{\alpha}\left(\frac{n\omega_n}{(2\pi)^\frac{n}{2}}\int_{0}^r e^{-\frac{t^2}{2}}t^{n-1}dt\right)^\alpha - r|\mu|\\
		& = r \left[\frac{1}{\alpha }\left(\frac{n\omega_n}{(2\pi)^\frac{n}{2}}\right)^\alpha \left(\frac{\int_{0}^r e^{-\frac{t^2}{2}}t^{n-1}dt}{r^{\frac{1}{\alpha}}}\right)^\alpha -|\mu|\right].
	\end{aligned}
	\end{equation*}
	It follows from simple computation that when $0<\alpha<\frac{1}{n}$, we have 
	\begin{equation*}
		\lim_{r\rightarrow 0^+}\frac{\int_{0}^r e^{-\frac{t^2}{2}}t^{n-1}dt}{r^{\frac{1}{\alpha}}} = \lim_{r\rightarrow 0^+}\frac{\alpha e^{-\frac{r^2}{2}}r^{n-1}}{r^{\frac{1}{\alpha}-1}} = \alpha \lim_{r\rightarrow 0^+} r^{n-\frac{1}{\alpha}} = \infty.
	\end{equation*}
	The desired result is therefore established.
\end{proof}

We are now ready to give a proof to Theorem \ref{thm solution to the normalized problem}. 
\begin{proof}[Proof of Theorem \ref{thm solution to the normalized problem}]

	By Lemma \ref{lemma local 00000}, it suffices to show that a maximizer to the optimization problem \eqref{eq local 00000} exists. We assume that $K_i$ is a sequence of $o$-symmetric convex bodies and 
	\begin{equation}
	\label{eq local 00003}
		\lim_{i\rightarrow \infty} \Gamma(K_i) = \sup \{ \Gamma(f): f\in C_e^+(\sn)\}>0,
	\end{equation}
	where the last inequality follows from Lemma \ref{lemma local 00001}. 
	
	Choose $r_i>0$ and $u_i\in \sn$ such that $r_iu_i\in K_i$ and 
	\begin{equation*}
		r_i = \max_{u\in \sn } \rho_{K_i}(u).
	\end{equation*}
	It is simple to notice that $K_i\subset r_i B$. We claim that $r_i$ is a bounded sequence. Otherwise, by taking a subsequence, we may assume that $\lim_{i\rightarrow \infty} r_i = \infty$. Since $K_i\subset r_i B$, we have
	\begin{equation*}
		\Gamma(K_i)\leq \frac{1}{\alpha} \gamma_n(r_iB)^\alpha - \int_\sn h_{K_i}d\mu.
 	\end{equation*}
 	Since $r_iu_i\in K_i$, we have by the definition of support function that 
 	\begin{equation*}
 		h_{K_i}(v)\geq r_i|v\cdot u_i|.
 	\end{equation*}
 	Therefore, we have
 	\begin{equation*}
 		\Gamma(K_i)\leq \frac{1}{\alpha} \gamma_n(r_iB)^\alpha - r_i\int_\sn |v\cdot u_i|d\mu.
 	\end{equation*}
 	By the fact that $\mu$ is not concentrated in any closed hemisphere, we may find $c_0>0$ such that 
 	\begin{equation*}
 		\int_\sn |v\cdot u_i|d\mu\geq c_0. 
 	\end{equation*}
 	Therefore,
 	\begin{equation*}
 	\begin{aligned}
 		\Gamma(K_i)&\leq \frac{1}{\alpha} \gamma_n(r_iB)^\alpha -r_i c_0\\
 		& = \frac{1}{\alpha}\left(\frac{n\omega_n}{(2\pi)^\frac{n}{2}}\int_{0}^{r_i} e^{-\frac{t^2}{2}}t^{n-1}dt\right)^\alpha - r_ic_0\\
 		& \rightarrow -\infty,
 	\end{aligned}
 	\end{equation*}
 	as $i\rightarrow\infty$, since the integral $\int_{0}^\infty e^{-\frac{t^2}{2}}t^{n-1}dt$ is convergent. But this is a contradiction to $K_i$ being a maximizing sequence and \eqref{eq local 00003}. Therefore, the sequence of convex bodies $K_i$ is uniformly bounded. We may therefore use Blaschke selection theorem and assume (by taking a subsequence) that $K_i$ converges in Hausdorff metric to a compact convex $o$-symmetric set $K_0$. Note that by the continuity of the Gaussian volume with respect to the Hausdorff metric, definition of $\Gamma$, and \eqref{eq local 00003}, we have
 	\begin{equation*}
 		\frac{1}{\alpha}\gamma_n(K_0)^\alpha \geq \Gamma(K_0) = \lim_{i\rightarrow \infty} \Gamma(K_i)>0.
 	\end{equation*}
 	This, when combined with the fact that $K_0$ is $o$-symmetric, implies that $K_0$ contains the origin as its interior point. Therefore, the convex body $K_0$ (or, its support function $h_{K_0}$) is a maximizer to the optimization \eqref{eq local 00000}.
\end{proof}

It is of great interest to ask whether the convex body satisfying \eqref{eq local 000011} is uniquely determined. 

\section{Isoperimetric inequalities in Gaussian probability space}
In this section, we recall the Ehrhard inequality and several of its consequences. 

The Ehrhard inequality was shown by Ehrhard \cite{MR745081} when both Borel sets involved are convex,  by Lata\l a \cite{MR1389763} when only one of the sets is assumed to be convex, and more recently by Borell \cite{MR2030108} when neither set is required to be convex. However, for the purpose of this paper, only Ehrhard's original version is required.
\begin{theorem}[Ehrhard inequality] Let $K, L$ be two convex bodies in $\rn$. 
	For $0<t<1$, we have
	\begin{equation*}
		\Phi^{-1}(\gamma_n((1-t)K+tL))\geq (1-t)\Phi^{-1}(\gamma_n(K))+t\Phi^{-1}(\gamma_n(L)).
	\end{equation*}
	Here,
	\begin{equation}
	\label{eq local 203}
		\Phi(x) = \frac{1}{\sqrt{2\pi}}\int_{-\infty}^{x} e^{-\frac{t^2}{2}}dt.
	\end{equation}
	Moreover, equality holds if and only if $K=L$.
\end{theorem}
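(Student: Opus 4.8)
The plan is to establish the Ehrhard inequality by \emph{Gaussian (Ehrhard) symmetrization}, reproducing the scheme of Ehrhard's original argument. For a unit vector $u$ and a closed convex set $A\subseteq\rn$, let $\Theta_u A$ be obtained by replacing, over each line $\ell$ in direction $u$, the (convex, hence interval) slice $A\cap\ell$ by the half-line of $\ell$ of the same one-dimensional Gaussian measure opening in the $-u$ direction: writing $x=y+su$ with $y\in u^{\perp}$, the slice over $y$ becomes $\{s\le \Phi^{-1}(\gamma_1(A^u_y))\}$. Since $\gamma_n=\gamma_1\otimes\gamma_{n-1}$ splits along $u$ and $u^{\perp}$, Fubini gives $\gamma_n(\Theta_u A)=\gamma_n(A)$ at once.

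The heart of the matter is a reduction to one dimension. First I would prove: for intervals $I,J\subseteq\R$ and $0<t<1$,
\[
\Phi^{-1}\big(\gamma_1((1-t)I+tJ)\big)\ \ge\ (1-t)\Phi^{-1}(\gamma_1(I))+t\Phi^{-1}(\gamma_1(J)),
\]
which is equivalent to concavity of $(x,y)\mapsto\Phi^{-1}(\Phi(y)-\Phi(x))$ on $\{x\le y\}$ and is checked by a direct computation of the Hessian using $\Phi''=-x\Phi'$ (this is essentially the one-dimensional Gaussian isoperimetric inequality; equality in it holds only when $I=J$ or when $I,J$ are half-lines opening the same way). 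Two consequences then follow by Fubini: (i) if $A$ is convex, $\Theta_u A$ is convex, because the ``height'' $y\mapsto\Phi^{-1}(\gamma_1(A^u_y))$ is concave on the (convex) projection of $A$ onto $u^{\perp}$; and (ii) the key inclusion
\[
\Theta_u\big((1-t)K+tL\big)\ \supseteq\ (1-t)\,\Theta_u K+t\,\Theta_u L,
\]
since along each direction-$u$ line the slice of the left side contains $(1-t)(\text{slice of }K)+t(\text{slice of }L)$, to which the one-dimensional inequality applies after taking $\gamma_1$ and $\Phi^{-1}$.

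Combining measure preservation, monotonicity of $\Phi^{-1}\!\circ\gamma_n$ under inclusion, and (ii) yields $\Phi^{-1}(\gamma_n((1-t)K+tL))\ge\Phi^{-1}(\gamma_n((1-t)\Theta_uK+t\Theta_uL))$, while $\gamma_n(\Theta_uK)=\gamma_n(K)$ and $\gamma_n(\Theta_uL)=\gamma_n(L)$ are unchanged. Iterating along a suitable (``universal'') dense sequence of directions $u_1,u_2,\dots$ applied to both bodies, the symmetrized sets $\Theta_{u_k}\cdots\Theta_{u_1}K$ and $\Theta_{u_k}\cdots\Theta_{u_1}L$ converge (in measure, locally in Hausdorff distance) to half-spaces $H_K=\{x\cdot e\le\Phi^{-1}(\gamma_n(K))\}$ and $H_L=\{x\cdot e\le\Phi^{-1}(\gamma_n(L))\}$ with a \emph{common} outer normal $e$ --- the Gaussian analogue of iterated Steiner symmetrizations converging to a ball. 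For parallel half-spaces, $(1-t)H_K+tH_L=\{x\cdot e\le(1-t)\Phi^{-1}(\gamma_n K)+t\Phi^{-1}(\gamma_n L)\}$, so applying $\Phi^{-1}(\gamma_n\cdot)$ gives exactly the right-hand side; passing to the limit (using continuity of Minkowski addition and of $\gamma_n$) closes the argument.

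For the equality case, equality in the conclusion forces equality at \emph{every} symmetrization step, in particular at the first: $\Phi^{-1}(\gamma_n((1-t)K+tL))=\Phi^{-1}(\gamma_n((1-t)\Theta_uK+t\Theta_uL))$. Tracing this back through the Fubini argument and invoking strictness in the one-dimensional concavity lemma shows that, slice by slice, $K$ and $L$ are already compatible with the symmetrization in direction $u$; running this over a rich enough family of directions collapses to $K=L$. I expect this equality analysis --- the delicate slice-by-slice bookkeeping that occupies much of Ehrhard's paper --- to be the main obstacle, whereas the inequality itself is a clean consequence of the one-dimensional lemma and the symmetrization formalism. (Since only the convex case is needed here, one may alternatively simply invoke \cite{MR745081}.)
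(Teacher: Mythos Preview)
The paper does not prove this theorem at all: it is stated as a known result, with the inequality attributed to Ehrhard \cite{MR745081} (with later extensions by Lata\l a and Borell), and the equality condition attributed to Ehrhard \cite{MR850753} and Shenfeld--van Handel \cite{MR3804680}. So there is nothing to compare on the paper's side.

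Your sketch is a faithful outline of Ehrhard's original symmetrization scheme and is essentially correct as a proof plan for the inequality: the one-dimensional concavity lemma, the measure-preserving Ehrhard symmetrization $\Theta_u$, the inclusion $\Theta_u((1-t)K+tL)\supseteq(1-t)\Theta_uK+t\Theta_uL$, and the iteration to parallel half-spaces are exactly the ingredients of \cite{MR745081}. Two places would need real work to make it self-contained: the convergence of iterated symmetrizations to a half-space (this is a nontrivial compactness/limit argument, not just ``the Gaussian analogue of Steiner''), and the equality case, which as you note is delicate and is in fact the subject of a separate paper \cite{MR850753}. For the purposes of the present paper, your own parenthetical remark is the right resolution: simply cite \cite{MR745081} and \cite{MR850753}, which is precisely what the authors do.
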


The equality condition in the above lemma was shown by Ehrhard \cite{MR850753}. More recently, for more general versions of Ehrhard inequality, the equality condition has been settled by Shenfeld and van Handel \cite{MR3804680}.

The following lemma is a direct consequence of Ehrhard inequality. See also Borell \cite{MR404559} for a characterization of log-concave measures.

\begin{lemma}
\label{lemma local 101}
	Let $K$ and $L$ be two convex bodies in $\rn$. For $0<t<1$, we have
	\begin{equation}
	\label{eq log concavity}
		\gamma_n((1-t)K+tL)\geq \gamma_n(K)^{1-t}\gamma_n(L)^t,
	\end{equation}
	with equality if and only if $K=L$.
\end{lemma}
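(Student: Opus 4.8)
The plan is to deduce \eqref{eq log concavity} from the Ehrhard inequality together with the log-concavity of the one-dimensional Gaussian distribution function $\Phi$ defined in \eqref{eq local 203}. Note first that $\Phi:\R\to(0,1)$ is a strictly increasing bijection and that a convex body $K$ satisfies $0<\gamma_n(K)<1$, so $\Phi^{-1}(\gamma_n(K))$ is a well-defined real number; this is the only point where one must be a little careful.

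First I would record that $\Phi$ is log-concave. Its derivative $\varphi(x):=\Phi'(x)=\frac{1}{\sqrt{2\pi}}e^{-x^2/2}$ is log-concave on $\R$, since $\log\varphi(x)=-x^2/2-\tfrac12\log(2\pi)$ is concave, and the integral of a log-concave function is log-concave (Pr\'ekopa); hence $\Phi$ is log-concave, i.e.
\begin{equation*}
	\Phi\big((1-t)x+ty\big)\ge\Phi(x)^{1-t}\Phi(y)^{t}\qquad\text{for all }x,y\in\R,\ t\in(0,1).
\end{equation*}
If one prefers to avoid invoking Pr\'ekopa, this inequality is equivalent to $-x\Phi(x)\le\varphi(x)$, which is immediate for $x\ge0$ and, for $x<0$, is precisely the classical Mills-ratio bound $\Phi(x)\le\varphi(x)/|x|$.

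Then I would chain the two facts. Setting $x=\Phi^{-1}(\gamma_n(K))$ and $y=\Phi^{-1}(\gamma_n(L))$, the Ehrhard inequality gives $\Phi^{-1}(\gamma_n((1-t)K+tL))\ge(1-t)x+ty$; applying the increasing function $\Phi$ and then the log-concavity above yields
\begin{equation*}
	\gamma_n((1-t)K+tL)\ \ge\ \Phi\big((1-t)x+ty\big)\ \ge\ \Phi(x)^{1-t}\Phi(y)^{t}\ =\ \gamma_n(K)^{1-t}\gamma_n(L)^{t},
\end{equation*}
which is \eqref{eq log concavity}. For the equality case: if $K=L$ both sides of \eqref{eq log concavity} equal $\gamma_n(K)$; conversely, equality in \eqref{eq log concavity} forces equality at every step of the displayed chain, in particular equality in the Ehrhard inequality, whose rigidity statement then gives $K=L$. (Here it is the strict monotonicity of $\Phi$ that lets one push equality in the composed inequality back to equality in Ehrhard's.)

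I do not anticipate a genuine obstacle. The only non-formal ingredient is the log-concavity of $\Phi$, which is classical, and the equality analysis comes for free because equality propagates up the chain and Ehrhard's equality condition does the rest; there is no need to analyze the equality case in the log-concavity step itself.
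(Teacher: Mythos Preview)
Your proposal is correct and follows essentially the same route as the paper, which simply asserts the lemma as ``a direct consequence of Ehrhard inequality'' without writing out the intermediate step. You have supplied that step---the log-concavity of $\Phi$---and handled the equality case cleanly by pushing equality back through the strict monotonicity of $\Phi$ to the Ehrhard rigidity statement; nothing further is needed.
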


Using the variational formula \eqref{eq local 7002}, we obtain the following Minkowski-type inequality.
\begin{lemma}
Let $K$ and $L$ be two convex bodies in $\rn$. We have
\begin{equation*}
		\int_\sn h_L-h_KdS_{\gamma_n, K}\geq \gamma_n(K)\log \frac{\gamma_n(L)}{\gamma_n(K)},
\end{equation*}
	with equality if and only if $K=L$.
\end{lemma}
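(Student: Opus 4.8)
The plan is to deduce this Minkowski-type inequality from the log-concavity of the Gaussian volume (Lemma \ref{lemma local 101}, i.e., \eqref{eq log concavity}) by differentiating at $t=0$. First I would set up the one-parameter family of Wulff shapes. Since the right side of \eqref{eq log concavity} involves the Minkowski combination $(1-t)K+tL$, and since $h_{(1-t)K+tL} = (1-t)h_K + th_L = h_K + t(h_L-h_K)$, I would write $g = h_L - h_K \in C(\sn)$ and $h_t = h_K + tg$, so that $[h_t] = (1-t)K+tL$ for $t\in[0,1]$ (using that Minkowski sums of convex bodies are their own Wulff shapes). Taking logarithms in \eqref{eq log concavity} gives
\begin{equation*}
\log \gamma_n([h_t]) \geq (1-t)\log\gamma_n(K) + t\log\gamma_n(L),
\end{equation*}
an inequality between two functions of $t$ on $[0,1]$ that are equal at $t=0$.

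The key step is then to differentiate at $t=0^+$. The left side $t\mapsto \log\gamma_n([h_t])$ is differentiable at $t=0$ by the variational formula \eqref{eq local 7002} combined with the chain rule: its derivative equals $\frac{1}{\gamma_n(K)}\int_\sn g\, dS_{\gamma_n,K} = \frac{1}{\gamma_n(K)}\int_\sn (h_L-h_K)\, dS_{\gamma_n,K}$. The right side is linear in $t$ with derivative $\log\gamma_n(L) - \log\gamma_n(K) = \log\frac{\gamma_n(L)}{\gamma_n(K)}$. Since both sides agree at $t=0$ and the left side dominates for $t>0$ small, comparing right-derivatives at $0$ yields
\begin{equation*}
\frac{1}{\gamma_n(K)}\int_\sn (h_L-h_K)\, dS_{\gamma_n,K} \geq \log\frac{\gamma_n(L)}{\gamma_n(K)},
\end{equation*}
and multiplying through by $\gamma_n(K)>0$ gives the asserted inequality.

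For the equality condition: if equality holds in the Minkowski-type inequality, I would need to trace back that equality in \eqref{eq log concavity} must hold for all $t\in(0,1)$, which by Lemma \ref{lemma local 101} forces $K=L$. The cleanest route is to consider the function $\phi(t) = \log\gamma_n([h_t]) - [(1-t)\log\gamma_n(K)+t\log\gamma_n(L)]$ on $[0,1]$; it is concave (since $\log\gamma_n([h_t])$ is concave in $t$ by \eqref{eq log concavity} applied with varying endpoints, or directly since $t\mapsto \gamma_n((1-t)K+tL)$ is log-concave), satisfies $\phi(0)=\phi(1)=0$ and $\phi\geq 0$, hence $\phi\equiv 0$ on $[0,1]$; but then $\phi'(0^+)=0$, which is exactly equality in our inequality, and conversely $\phi\equiv 0$ means equality holds in \eqref{eq log concavity} for, say, $t=1/2$, so $K=L$ by the equality case of Lemma \ref{lemma local 101}. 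The converse ($K=L$ implies equality) is trivial since both sides vanish.

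The main obstacle I anticipate is the rigorous justification of differentiating $\log\gamma_n([h_t])$ at the endpoint $t=0$: one must check $\gamma_n(K)>0$ (true since $o\in\operatorname{int}K$) and that the variational formula \eqref{eq local 7002}, which is stated as a two-sided limit for $h_K+tf$ with $f\in C(\sn)$, applies here with $f = h_L-h_K$; this is fine because $g=h_L-h_K$ is continuous, so \eqref{eq local 7002} gives exactly the one-sided (indeed two-sided) derivative needed. A secondary subtlety is establishing concavity of $\phi$ for the equality argument — but this follows immediately from log-concavity of $t\mapsto\gamma_n((1-t)K+tL)$, which is \eqref{eq log concavity} itself (applied to subintervals). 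No genuinely hard analysis is required; the content is entirely in Lemma \ref{lemma local 101} and Theorem \ref{thm variational formula}.
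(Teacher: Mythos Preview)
Your approach is exactly the paper's: use the log-concavity inequality \eqref{eq log concavity}, set $g(t)=\log\gamma_n((1-t)K+tL)$, note that $g$ is concave, and compare $g'(0)$ (computed via Theorem \ref{thm variational formula}) with the secant slope $g(1)-g(0)$.

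One logical slip in your equality argument deserves fixing. You write that $\phi$ is concave, $\phi(0)=\phi(1)=0$, and $\phi\ge 0$, ``hence $\phi\equiv 0$''; but those three facts alone do \emph{not} force $\phi\equiv 0$ (e.g.\ $\phi(t)=t(1-t)$). The hypothesis you have not yet used is precisely equality in the Minkowski-type inequality, which says $\phi'(0^+)=0$. The correct chain is: $\phi$ concave with $\phi(0)=0$ and $\phi'(0^+)=0$ gives $\phi\le 0$ on $[0,1]$ (concave functions lie below their tangents), and combined with $\phi\ge 0$ this yields $\phi\equiv 0$; then equality in \eqref{eq log concavity} at $t=1/2$ forces $K=L$. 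This is the same argument the paper gives, phrased there as ``equality forces $g$ linear.''
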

\begin{proof}
	Lemma \ref{lemma local 101} implies that the function $g:[0,1]\rightarrow \mathbb{R}$ given by
	\begin{equation*}
		g(t) = \log \gamma_n((1-t)K+tL)
	\end{equation*}
	is concave. Therefore, the slope of the tangent line at $t=0$ is no smaller than the slope of the secant line joining $(0,g(0))$ and $(1,g(1))$. Using \eqref{eq local 7002} to compute $g'(0)$, we immediately arrive at the desired inequality. 
	
	If equality holds, then $g(t)$ is a linear function. Thus, equality holds in \eqref{eq log concavity}, which then implies that $K=L$. 
	
\end{proof}

An immediate consequence is
\begin{lemma}
\label{coro first variation of log concavity}
	Let $K$ and $L$ be two convex bodies in $\rn$. If $\gamma_n(K)=\gamma_n(L)$, then
	\begin{equation}
	\label{eq local 7007}
		\int_{\sn} h_L dS_{\gamma_n,K} \geq  \int_{\sn} h_K dS_{\gamma_n,K},
	\end{equation}
	with equality if and only $K=L$.
\end{lemma}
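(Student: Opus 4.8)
The statement to prove (Lemma~\ref{coro first variation of log concavity}) is an immediate consequence of the preceding Minkowski-type inequality, so the plan is simply to specialize it.

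\medskip

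The plan is to apply the preceding lemma, which asserts
\begin{equation*}
	\int_\sn (h_L-h_K)\,dS_{\gamma_n, K}\geq \gamma_n(K)\log \frac{\gamma_n(L)}{\gamma_n(K)},
\end{equation*}
with equality if and only if $K=L$. First I would invoke the hypothesis $\gamma_n(K)=\gamma_n(L)$, which makes the ratio $\gamma_n(L)/\gamma_n(K)$ equal to $1$, hence its logarithm vanishes, so the right-hand side is $0$. The inequality then reads $\int_\sn h_L\,dS_{\gamma_n,K}-\int_\sn h_K\,dS_{\gamma_n,K}\geq 0$, which upon rearranging is exactly \eqref{eq local 7007}. (Here I use that $h_K,h_L$ are continuous on $\sn$ and $S_{\gamma_n,K}$ is a finite Borel measure, so both integrals are finite and the integral splits additively; this is guaranteed since $K,L$ are convex bodies and $S_{\gamma_n,K}$ is a finite measure by its definition in \eqref{eq local 7001}.)

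\medskip

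Second, I would record the equality case: by the equality characterization in the preceding lemma, equality in $\int_\sn (h_L-h_K)\,dS_{\gamma_n,K}\geq \gamma_n(K)\log(\gamma_n(L)/\gamma_n(K))$ holds precisely when $K=L$; since the right-hand side is $0$ under our volume hypothesis, equality in \eqref{eq local 7007} is the same as equality in that lemma, hence holds if and only if $K=L$. This completes the argument.

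\medskip

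There is essentially no obstacle here; the only thing to be careful about is that the equality condition is inherited verbatim from the prior lemma rather than needing a fresh argument, and that the reduction genuinely uses $\gamma_n(K)=\gamma_n(L)$ to kill the logarithmic term. So the ``hard part,'' such as it is, was already done in establishing the Minkowski-type inequality via concavity of $t\mapsto\log\gamma_n((1-t)K+tL)$ (Lemma~\ref{lemma local 101}, which rests on Ehrhard's inequality); this corollary is a one-line specialization.
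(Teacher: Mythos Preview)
Your proposal is correct and matches the paper's approach exactly: the paper states this lemma as ``an immediate consequence'' of the preceding Minkowski-type inequality, and your specialization (setting $\gamma_n(K)=\gamma_n(L)$ so the logarithmic term vanishes, and inheriting the equality condition) is precisely what is intended.
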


The Ehrhard inequality also implies the following isoperimetric inequality in Gaussian probability space. See, for example, \cite{MR1957087}.
\begin{theorem}[Gaussian isoperimetric inequality]
	Let $K$ be a convex body in $\mathbb{R}^n$. Then,
	\begin{equation*}
|S_{\gamma_n,K}|\geq \varphi(\Phi^{-1}(\gamma_n(K))),
	\end{equation*}
	where $\varphi(t) = (\sqrt{2\pi})^{-1}\exp(-t^2/2)$ and $\Phi$ is as given in \eqref{eq local 203}. 
\end{theorem}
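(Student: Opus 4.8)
The plan is to deduce the stated lower bound $|S_{\gamma_n,K}|\geq \varphi(\Phi^{-1}(\gamma_n(K)))$ from the Ehrhard inequality by comparing $K$ with a half-space, which is the extremizer in the Gaussian setting. First I would recall that $|S_{\gamma_n,K}|$ is, up to the sign in the variational formula, the \emph{Gaussian perimeter} of $K$: by Theorem \ref{thm variational formula} with $f\equiv 1$ (so that $[h_K+t]=K+tB$), one has
\begin{equation*}
    \lim_{t\to 0^+}\frac{\gamma_n(K+tB)-\gamma_n(K)}{t}=\int_{\sn}1\,dS_{\gamma_n,K}=|S_{\gamma_n,K}|.
\end{equation*}
Thus it suffices to bound this one-sided derivative of Gaussian volume under outward Minkowski expansion from below.

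Next I would fix a half-space $H=\{x:x\cdot e\leq a\}$ with $a=\Phi^{-1}(\gamma_n(K))$, so that $\gamma_n(H)=\gamma_n(K)$ (the Gaussian measure of a half-space at signed distance $a$ from the origin along a unit vector is exactly $\Phi(a)$, by Fubini and the definition \eqref{eq local 203}). A half-space is not a convex \emph{body}, so strictly speaking one should apply Ehrhard's inequality to the truncations $H\cap RB$ and pass to the limit $R\to\infty$, or invoke the more general versions of Ehrhard's inequality cited in the excerpt; I would phrase this as a routine approximation. Applying the Ehrhard inequality to $K$ and $H$ (with the roles arranged so the moving body is $K+tB$), together with $\Phi^{-1}(\gamma_n(K))=\Phi^{-1}(\gamma_n(H))=a$, yields for small $t>0$
\begin{equation*}
    \Phi^{-1}\!\big(\gamma_n(K+tB)\big)\geq \Phi^{-1}(\gamma_n(K))+t\cdot\Phi^{-1}(\gamma_n(B_\infty)),
\end{equation*}
more precisely one compares $K+tB$ with the expansion of the half-space $H+tB$, which is again a half-space at signed distance $a+t$; hence $\Phi^{-1}(\gamma_n(K+tB))\geq a+t$, i.e. $\gamma_n(K+tB)\geq \Phi(a+t)$. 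Since $\gamma_n(K)=\Phi(a)$, dividing by $t$ and letting $t\to 0^+$ gives
\begin{equation*}
    |S_{\gamma_n,K}|=\lim_{t\to 0^+}\frac{\gamma_n(K+tB)-\gamma_n(K)}{t}\geq \Phi'(a)=\varphi(a)=\varphi\big(\Phi^{-1}(\gamma_n(K))\big),
\end{equation*}
which is the claim.

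The main obstacle is the one I flagged: half-spaces are unbounded and so fall outside the hypotheses of the Ehrhard inequality as stated in the excerpt (which is for convex bodies). I would handle this either by the truncation-and-limit argument (checking that $\gamma_n((H\cap R B)+tB)\to\gamma_n(H+tB)$ and that $\Phi^{-1}$ is continuous on $(0,1)$, so the inequality survives the limit), or by simply citing the more general form of Ehrhard's inequality due to Borell \cite{MR2030108} that does not require convexity, noting that all Borel sets are then permissible. A secondary (minor) point is justifying that the difference quotient for $\gamma_n(K+tB)$ has a genuine limit as $t\to 0^+$ equal to $|S_{\gamma_n,K}|$; this is exactly the content of Theorem \ref{thm variational formula} applied with $f\equiv 1$, valid since $K\in\mathcal K_o^n$. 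No monotonicity or smoothness of $K$ is needed beyond what Theorem \ref{thm variational formula} already provides.
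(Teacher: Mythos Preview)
The paper does not actually prove this theorem: it is stated as a known consequence of the Ehrhard inequality, with a reference to the literature. Your overall strategy---deduce the integrated inequality $\gamma_n(K+tB)\geq \Phi(a+t)$ with $a=\Phi^{-1}(\gamma_n(K))$ from Ehrhard, then differentiate at $t=0$ using the variational formula---is exactly the standard route and is what the paper's one-line remark is alluding to.

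However, the key middle step in your write-up is not carried out. Applying Ehrhard ``to $K$ and $H$'' gives information about convex combinations $(1-\lambda)K+\lambda H$, not about $K+tB$; and your displayed inequality with $\Phi^{-1}(\gamma_n(B_\infty))$ is ill-defined (that quantity is $+\infty$). The subsequent sentence, ``one compares $K+tB$ with the expansion of the half-space $H+tB$, \dots\ hence $\Phi^{-1}(\gamma_n(K+tB))\geq a+t$,'' simply \emph{asserts} the integrated isoperimetric inequality you are trying to prove, rather than deriving it from Ehrhard. To close the gap, apply Ehrhard to $K$ and the ball $B_R$ with $\lambda=t/R$:
\begin{equation*}
\Phi^{-1}\big(\gamma_n((1-t/R)K+B_t)\big)\geq (1-t/R)\,a+\tfrac{t}{R}\,\Phi^{-1}(\gamma_n(B_R)).
\end{equation*}
Since $(1-t/R)K+B_t\to K+tB$ in Hausdorff metric and a tail computation gives $\Phi^{-1}(\gamma_n(B_R))/R\to 1$ as $R\to\infty$, letting $R\to\infty$ yields $\Phi^{-1}(\gamma_n(K+tB))\geq a+t$, after which your differentiation argument goes through. (Your remarks about truncating half-spaces are then unnecessary, since both $K$ and $B_R$ are genuine convex bodies and the Ehrhard inequality as stated in the paper applies directly.)
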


The Guassian isoperimetric inequality is a consequence of Ehrhard inequality. A direct consequence of the Gaussian isoperimetric inequality is the following.
\begin{coro}
\label{coro local 1001}
	If $K$ is a convex body in $\rn$ such that $\gamma_n(K)=1/2$, then $|S_{\gamma_n,K}|\geq\frac{1}{\sqrt{2\pi}}$.
\end{coro}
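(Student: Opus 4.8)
The plan is to read off Corollary~\ref{coro local 1001} directly from the Gaussian isoperimetric inequality stated just above it, with the single input $\gamma_n(K)=1/2$. Recall that the inequality reads
\[
	|S_{\gamma_n,K}|\geq \varphi\bigl(\Phi^{-1}(\gamma_n(K))\bigr),\qquad \varphi(t)=\tfrac{1}{\sqrt{2\pi}}e^{-t^2/2},
\]
so the entire task reduces to evaluating $\varphi\circ\Phi^{-1}$ at $1/2$.

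First I would note that $\Phi$, as defined in \eqref{eq local 203}, is the cumulative distribution function of the standard one-dimensional Gaussian; in particular it is continuous and strictly increasing on $\mathbb{R}$ with range $(0,1)$, so $\Phi^{-1}$ is well defined on $(0,1)$. Since the density $e^{-t^2/2}$ is even, $\int_{-\infty}^{0}e^{-t^2/2}\,dt=\tfrac12\int_{-\infty}^{\infty}e^{-t^2/2}\,dt=\tfrac{\sqrt{2\pi}}{2}$, hence $\Phi(0)=\tfrac12$, i.e.\ $\Phi^{-1}(1/2)=0$.

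Then I would simply substitute: with $\gamma_n(K)=1/2$ the isoperimetric bound becomes
\[
	|S_{\gamma_n,K}|\geq \varphi\bigl(\Phi^{-1}(1/2)\bigr)=\varphi(0)=\frac{1}{\sqrt{2\pi}},
\]
which is exactly the claim. There is no real obstacle here; the statement is a one-line specialization, and the only thing that needs any justification at all is the elementary fact $\Phi^{-1}(1/2)=0$, which follows from the symmetry of the Gaussian density. (If one wished, one could equivalently invoke the fact that half-spaces through the origin have Gaussian measure $1/2$ and Gaussian surface area $\tfrac{1}{\sqrt{2\pi}}$, and that by Ehrhard's inequality they minimize Gaussian surface area among sets of given Gaussian measure, but the computation above is cleaner.)
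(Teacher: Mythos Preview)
Your proof is correct and is exactly the argument the paper has in mind: the paper states the corollary as ``a direct consequence of the Gaussian isoperimetric inequality'' without further detail, and your computation $\Phi^{-1}(1/2)=0$, $\varphi(0)=1/\sqrt{2\pi}$ is precisely that direct consequence spelled out.
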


\section{Uniqueness of solution}

In this section, we will show that the solution to the Gaussian Minkowski problem is unique if one restricts the solution set to bodies with sufficiently big Gaussian volume.

\begin{lemma}
\label{lemma equal volume}
	Suppose $K, L\in \mathcal{K}_o^n$ and $K, L$ both solve the Gaussian Minkowski problem; i.e.,
	\begin{equation}
	\label{eq local 209}
		S_{\gamma_n, K} = S_{\gamma_n, L}= \mu.
	\end{equation}
	If $\gamma_n(K),\gamma_n(L)\geq 1/2$, then,
	\begin{equation*}
		\gamma_n(K)=\gamma_n(L).
	\end{equation*}
\end{lemma}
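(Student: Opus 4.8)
The strategy is to upgrade the first-variation inequality used earlier—the one derived from log-concavity of $\gamma_n$, which is too weak here—to the sharp one coming from \emph{Ehrhard's inequality}, and then to exploit that the hypothesis $\gamma_n\ge 1/2$ places the relevant quantities exactly where the standard Gaussian density $\varphi$ is monotone. (That the log-version alone does not suffice is seen by applying it to the pair $(K,L)$ and to the pair $(L,K)$ and adding the two: every $\mu$-integral cancels and one is left only with $(\gamma_n(K)-\gamma_n(L))\log(\gamma_n(L)/\gamma_n(K))\le 0$, which is automatically true.)

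First I would fix $K,L\in\mathcal{K}_o^n$ with $S_{\gamma_n,K}=S_{\gamma_n,L}=\mu$ and set $V(t):=\gamma_n((1-t)K+tL)$ for $t\in[0,1]$. Since $o\in\text{int}\,K\cap\text{int}\,L$ and $K,L$ are bounded, $V(t)\in(0,1)$, so $f(t):=\Phi^{-1}(V(t))$ is well defined, and Ehrhard's inequality says precisely that $f$ is concave on $[0,1]$. Because $(1-t)K+tL=[h_K+t(h_L-h_K)]$, the variational formula \eqref{eq local 7002} gives $V'(0)=\int_\sn(h_L-h_K)\,dS_{\gamma_n,K}$, and together with $\Phi'=\varphi$ and the chain rule this produces a finite one-sided derivative $f'(0^+)=\varphi(\Phi^{-1}(\gamma_n(K)))^{-1}\int_\sn(h_L-h_K)\,d\mu$. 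Concavity of $f$ then gives $f'(0^+)\ge f(1)-f(0)$, i.e.
\[
 \int_\sn(h_L-h_K)\,d\mu \ \ge\ \varphi\big(\Phi^{-1}(\gamma_n(K))\big)\Big(\Phi^{-1}(\gamma_n(L))-\Phi^{-1}(\gamma_n(K))\Big),
\]
and running the identical argument with $K$ and $L$ interchanged (now using $S_{\gamma_n,L}=\mu$) yields the companion inequality with $K$ and $L$ swapped.

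Adding the two inequalities, all $\mu$-integrals cancel; writing $x=\Phi^{-1}(\gamma_n(K))$ and $y=\Phi^{-1}(\gamma_n(L))$, what remains is $0\ge (y-x)(\varphi(x)-\varphi(y))$. Here the hypothesis enters: $\gamma_n(K),\gamma_n(L)\ge 1/2=\Phi(0)$ forces $x,y\ge 0$, and $\varphi(s)=(\sqrt{2\pi})^{-1}e^{-s^2/2}$ is strictly decreasing on $[0,\infty)$, so $(y-x)(\varphi(x)-\varphi(y))>0$ whenever $x\ne y$. Hence $x=y$, i.e. $\gamma_n(K)=\gamma_n(L)$, which is the assertion. (This lemma is the first half of the uniqueness statement; once equal Gaussian volumes are known, inequality \eqref{eq local 7007} together with its equality case upgrades the conclusion to $K=L$.)

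I do not anticipate a genuine obstacle. The only points needing a word of care are the differentiability of $f=\Phi^{-1}\circ V$ at the endpoint with the derivative supplied by \eqref{eq local 7002}—immediate, since $V$ is differentiable by the variational formula and stays strictly inside $(0,1)$—and the recognition that Ehrhard's inequality, rather than mere log-concavity, is what is required. The role of the constant $1/2$ is transparent: it is the Gaussian measure of a half-space through the origin, equivalently $\Phi^{-1}(1/2)=0$ is the mode of $\varphi$, i.e. exactly the threshold past which $\varphi$ is monotone.
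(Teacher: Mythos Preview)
Your proof is correct and follows essentially the same approach as the paper: both differentiate the Ehrhard inequality at $t=0$ to obtain a pair of first-variation inequalities, use $S_{\gamma_n,K}=S_{\gamma_n,L}=\mu$ to compare them, and then finish via the monotonicity of $\varphi$ (equivalently, of $(\Phi^{-1})'$) on $[0,\infty)$, which is exactly where the hypothesis $\gamma_n\ge 1/2$ enters. The only cosmetic difference is that you add the two inequalities to cancel the $\mu$-integrals, whereas the paper divides by $\Psi'>0$ and compares the resulting upper and lower bounds on $\int(h_L-h_K)\,d\mu$; both routes collapse to $(y-x)(\varphi(x)-\varphi(y))\le 0$.
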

\begin{proof}
	For simplicity, we write $\Psi = \Phi^{-1}$ where $\Phi$ is given in \eqref{eq local 203}. Then, Ehrhard inequality says
	\begin{equation}
	\label{eq local 206}
		\Psi (\gamma_n((1-t)K+tL))\geq (1-t) \Psi (\gamma_n(K))+t\Psi(\gamma_n(L)),
	\end{equation}
	with equality if and only if $K=L$. Notice that $\Psi$ is $C^\infty$ and strictly monotonically increasing. Utilizing Theorem \ref{thm variational formula}, we take the first derivative of \eqref{eq local 206} at $t=0$ and get
	\begin{equation}
	\label{eq local 207}
		\Psi'(\gamma_n(K))\int_{\sn} h_L-h_K dS_{\gamma_n, K} \geq \Psi(\gamma_n(L))-\Psi(\gamma_n(K)).
 	\end{equation}
 	Switching the role of $K$ and $L$, we get
 	\begin{equation}
 	\label{eq local 208}
 		\Psi'(\gamma_n(L))\int_{\sn} h_K-h_L dS_{\gamma_n, L} \geq \Psi(\gamma_n(K))-\Psi(\gamma_n(L)).
 	\end{equation}
 	By \eqref{eq local 209}, we have
 	\begin{equation}
 	\label{eq local 210}
 		\Psi'(\gamma_n(L))\int_{\sn} h_L-h_K dS_{\gamma_n, K} \leq \Psi(\gamma_n(L))-\Psi(\gamma_n(K)).
 	\end{equation}
 	from \eqref{eq local 208}. Since $\Psi'>0$, \eqref{eq local 207} and \eqref{eq local 210} imply that
 	\begin{equation*}
 		\frac{\Psi(\gamma_n(L))-\Psi(\gamma_n(K))}{\Psi'(\gamma_n(L))}\geq \frac{\Psi(\gamma_n(L))-\Psi(\gamma_n(K))}{\Psi'(\gamma_n(K))},
 	\end{equation*}
 	or, equivalently,
 	\begin{equation}
 	\label{eq local 211}
 		\left(\Psi'(\gamma_n(K))-\Psi'(\gamma_n(L))\right)\left(\Psi(\gamma_n(K))-\Psi(\gamma_n(L))\right)\leq 0.
 	\end{equation}
 	By the definition of $\Psi$ and chain rule, we can compute
 	\begin{equation*}
 		\Psi'(x)= \sqrt{2\pi}e^{\frac{\Psi(x)^2}{2}}.
 	\end{equation*}
 	Since $\Psi$ is strictly increasing on $[1/2,1]$, this implies that $\Psi'$ is also strictly increasing. This, when combined with the fact that $\Psi$ is strictly increasing, shows that
 	\begin{equation*}
 		(\Psi'(a)-\Psi'(b))(\Psi(a)-\Psi(b))\geq 0,
 	\end{equation*}
 	with equality if and only if $a=b$. Equation \eqref{eq local 211} now gives us the desired result.
\end{proof}

We are now ready to prove the uniqueness part of the Gaussian Minkowski problem when we restrict to the set of convex bodies whose Gaussian measure is no smaller than $1/2$.
\begin{theorem}
\label{thm uniqueness}
	Suppose $K, L\in \mathcal{K}_o^n$ and $K, L$ both solve the Gaussian Minkowski problem; i.e.,
	\begin{equation*}
		S_{\gamma_n, K} = S_{\gamma_n, L}= \mu.
	\end{equation*}
	If $\gamma_n(K),\gamma_n(L)\geq 1/2$, then $K=L$.
\end{theorem}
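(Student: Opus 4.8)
The statement follows quickly by combining the two main results already established in this section. The plan is as follows. First I would invoke Lemma \ref{lemma equal volume}: since $K,L\in\mathcal K_o^n$ share the same Gaussian surface area measure $\mu$ and both have Gaussian volume at least $1/2$, we conclude $\gamma_n(K)=\gamma_n(L)$. This is the only place the hypothesis $\gamma_n(K),\gamma_n(L)\geq 1/2$ is needed, and it is crucial because Lemma \ref{lemma equal volume} relies on $\Psi'=\sqrt{2\pi}e^{\Psi^2/2}$ being monotone on $[1/2,1]$.

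Next, having equal Gaussian volumes, I would apply Lemma \ref{coro first variation of log concavity} twice. Applied with the pair $(K,L)$ it yields
\begin{equation*}
	\int_{\sn} h_L\, dS_{\gamma_n,K}\geq \int_{\sn} h_K\, dS_{\gamma_n,K},
\end{equation*}
with equality if and only if $K=L$; applied with the roles of $K$ and $L$ interchanged it yields
\begin{equation*}
	\int_{\sn} h_K\, dS_{\gamma_n,L}\geq \int_{\sn} h_L\, dS_{\gamma_n,L},
\end{equation*}
with equality if and only if $K=L$. Now I would use the hypothesis $S_{\gamma_n,K}=S_{\gamma_n,L}=\mu$ to rewrite both inequalities against the single measure $\mu$, obtaining $\int_{\sn} h_L\,d\mu\geq \int_{\sn} h_K\,d\mu$ and $\int_{\sn} h_K\,d\mu\geq \int_{\sn} h_L\,d\mu$ simultaneously. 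Hence $\int_{\sn} h_L\,d\mu=\int_{\sn} h_K\,d\mu$, so equality holds in (either of) the two inequalities above, and the equality characterization in Lemma \ref{coro first variation of log concavity} forces $K=L$.

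There is essentially no serious obstacle left at this stage; the real analytic content has already been absorbed into Lemma \ref{lemma equal volume} (which depends on Ehrhard's inequality and a monotonicity computation for $\Psi'$) and into the equality case of Lemma \ref{coro first variation of log concavity} (which ultimately traces back to the equality condition in Ehrhard's inequality, i.e.\ $K=L$). The one point to be careful about is simply that both applications of Lemma \ref{coro first variation of log concavity} are legitimate, which is guaranteed precisely by the conclusion $\gamma_n(K)=\gamma_n(L)$ of the previous lemma.
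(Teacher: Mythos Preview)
Your proposal is correct and matches the paper's own proof essentially line for line: the paper also first invokes Lemma \ref{lemma equal volume} to get $\gamma_n(K)=\gamma_n(L)$, and then uses Lemma \ref{coro first variation of log concavity} together with $S_{\gamma_n,K}=S_{\gamma_n,L}$ to force equality in \eqref{eq local 7007} and hence $K=L$. Your version is simply more explicit about applying Lemma \ref{coro first variation of log concavity} in both directions to obtain the two opposite inequalities.
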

\begin{proof}
	By Lemma \ref{lemma equal volume}, we have $\gamma_n(K)=\gamma_n(L)$. By Corollary \ref{coro first variation of log concavity} and the fact that $S_{\gamma_n, K} = S_{\gamma_n, L}$, we conclude that equality holds in \eqref{eq local 7007} and therefore by the equality condition, we have $K=L$.
\end{proof}

\section{Existence of $o$-symmetric solutions}
\label{section existence}
For the rest of the paper, we are going to prove existence results regarding the Gaussian Minkowski problem. For this purpose, we shall restrict ourselves to the $o$-symmetric case; that is, when the given data ($\mu$ or in the smooth case, its density $f$) is even and the potential solution set is restricted to $\mathcal{K}_e^n$.

In this section, we will first prove the existence result when the given data is sufficiently smooth and everywhere positive. To do that, some \emph{a-priori} estimates are required. At the end of the section, an approximation argument will be deployed to get the solution when the given data is a measure.

\subsection{$C^0$ estimate}

\begin{lemma}
\label{lemma C0 lower bound}
	There exists a constant $c>0$ such that if $K\in \mathcal{K}_e^n$ and $\gamma_n(K)\geq \frac{1}{2}$, then its support function $h_K$ is bounded from below by $c$ on $\sn$.
\end{lemma}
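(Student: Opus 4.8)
The plan is to argue by contradiction: suppose no such $c>0$ exists, so there is a sequence $K_i\in\mathcal K_e^n$ with $\gamma_n(K_i)\ge 1/2$ but $\min_{\sn}h_{K_i}\to 0$. Since $K_i$ is origin-symmetric, the minimum of $h_{K_i}$ is attained at some $v_i\in\sn$, and $h_{K_i}(v_i)=h_{K_i}(-v_i)\to 0$ means that $K_i$ is squeezed into a thin slab $\{x:|x\cdot v_i|\le\varepsilon_i\}$ with $\varepsilon_i\to 0$ perpendicular to $v_i$. The key geometric point is that such a slab has small Gaussian volume: indeed, by Fubini,
\begin{equation*}
	\gamma_n\big(\{x:|x\cdot v_i|\le\varepsilon_i\}\big)=\frac{1}{\sqrt{2\pi}}\int_{-\varepsilon_i}^{\varepsilon_i}e^{-s^2/2}\,ds\le\sqrt{\frac{2}{\pi}}\,\varepsilon_i,
\end{equation*}
which tends to $0$. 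Since $K_i\subset\{x:|x\cdot v_i|\le h_{K_i}(v_i)\}$, we get $\gamma_n(K_i)\le\sqrt{2/\pi}\,h_{K_i}(v_i)\to 0$, contradicting $\gamma_n(K_i)\ge 1/2$.

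Concretely I would carry this out as follows. First, fix $K\in\mathcal K_e^n$ and let $v\in\sn$ be a point where $h_K$ attains its minimum $m:=\min_{\sn}h_K$. By the definition of the support function and origin-symmetry of $K$, every $x\in K$ satisfies $x\cdot v\le h_K(v)=m$ and $x\cdot(-v)\le h_K(-v)=m$, hence $|x\cdot v|\le m$; thus $K\subset\{x\in\rn:|x\cdot v|\le m\}$. Second, by monotonicity of $\gamma_n$ and the Fubini computation above (rotating so that $v=e_n$ and using that the Gaussian measure is a product measure),
\begin{equation*}
	\gamma_n(K)\le\gamma_n\big(\{x:|x\cdot v|\le m\}\big)=\frac{1}{\sqrt{2\pi}}\int_{-m}^{m}e^{-s^2/2}\,ds\le\sqrt{\tfrac{2}{\pi}}\,m.
\end{equation*}
Third, combine with $\gamma_n(K)\ge 1/2$ to conclude $m\ge\sqrt{\pi/2}/2=\sqrt{\pi/8}$, so $c=\sqrt{\pi/8}$ works (any smaller explicit constant obtained this way is fine). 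This even gives a clean explicit bound rather than just an existence statement.

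There is really no hard obstacle here; the only thing to be careful about is the reduction to a slab, which uses origin-symmetry in an essential way (for a general $K\in\mathcal K_o^n$ a small support function value at one point does not force small Gaussian volume, since the body could still be long in other directions — this is exactly why the lemma is stated for $\mathcal K_e^n$). The one routine check is the elementary inequality $\frac{1}{\sqrt{2\pi}}\int_{-m}^m e^{-s^2/2}\,ds\le\sqrt{2/\pi}\,m$, which follows from $e^{-s^2/2}\le 1$. I would write the argument directly (non-contradiction form) since it produces an explicit constant, which may be convenient for the later a-priori estimates in this section.
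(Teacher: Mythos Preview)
Your proof is correct and follows essentially the same approach as the paper: contain $K$ in the slab $\{|x\cdot v|\le h_K(v)\}$ using origin-symmetry, then use that a thin slab has small Gaussian measure to derive a contradiction (or, as you do, a direct lower bound). The only difference is cosmetic---you carry out the Fubini computation explicitly and extract the constant $c=\sqrt{\pi/8}$, whereas the paper leaves the slab estimate implicit and phrases the argument purely by contradiction.
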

\begin{proof}
	We argue by contradiction and assume that there exists $K_i\in \mathcal{K}_e^n$ with $\gamma_n(K_i)\geq \frac{1}{2}$ and $v_i\in \sn$ such that $h_i:=h_{K_i}(v_i)\rightarrow 0$. Then, by definition of support function, we have
	\begin{equation*}
		K_i\subset \{x\in \rn: |x\cdot v_i|\leq h_i\}.
	\end{equation*}
	Therefore,
	\begin{equation*}
		\gamma_n(K_i)\leq \gamma_n(\{x\in \rn: |x\cdot v_i|\leq h_i\})\rightarrow 0,
	\end{equation*}
	which contradicts with the given condition that $\gamma_n(K_i)\geq \frac{1}{2}$.
\end{proof}

\begin{lemma}
\label{lemma far away 0}
	Let $K_i$ be a sequence of convex bodies in $\mathcal{K}_e^n$ and $r_i=|h_{K_i}|_{C^\infty}$. If $\lim_{i\rightarrow \infty} r_i=\infty$, then for each $0<c\leq 1$ and $v\in \sn$, we have
	\begin{equation*}
		\lim_{i\rightarrow \infty} \int_{\omega_i} e^{-\frac{|x|^2}{2}}d\mathcal{H}^{n-1}(x)=0,
	\end{equation*}
	where
	\begin{equation*}
		\omega_i =\{x\in \partial K_i: x\cdot v>cr_i\}.
	\end{equation*}
\end{lemma}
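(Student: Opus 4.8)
The plan is to combine the rapid Gaussian decay of $e^{-|x|^2/2}$ on $\omega_i$ --- which lies at distance at least $cr_i$ from the origin --- with the observation that the surface area of $\omega_i$ grows at most polynomially in $r_i$, so that the exponential factor wins.

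First, I would record the elementary containment $K_i\subseteq r_iB$. Indeed, by the definition of $r_i$ we have $\max_{\sn}h_{K_i}\le r_i$, so every $x\in K_i$ satisfies $|x|=\max_{u\in\sn}x\cdot u\le r_i$; in particular $\partial K_i\subseteq r_iB$. Moreover, for $x\in\omega_i$ one has $|x|\ge x\cdot v>cr_i$, and hence
\begin{equation*}
	e^{-\frac{|x|^2}{2}}\le e^{-\frac{c^2r_i^2}{2}}\qquad\text{for all }x\in\omega_i.
\end{equation*}
Next, I would bound $\mathcal H^{n-1}(\omega_i)$. Since $\omega_i\subseteq\partial K_i$ and $K_i\subseteq r_iB$ are nested convex bodies, monotonicity of surface area under inclusion for convex bodies (see \cite{schneider2014}) yields
\begin{equation*}
	\mathcal H^{n-1}(\omega_i)\le \mathcal H^{n-1}(\partial K_i)\le \mathcal H^{n-1}(\partial(r_iB))=n\omega_n r_i^{n-1}.
\end{equation*}

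Finally, putting the two estimates together gives
\begin{equation*}
	\int_{\omega_i}e^{-\frac{|x|^2}{2}}\,d\mathcal H^{n-1}(x)\le n\omega_n\,r_i^{n-1}\,e^{-\frac{c^2r_i^2}{2}},
\end{equation*}
and since $c>0$ is fixed while $r_i\to\infty$, the right-hand side tends to $0$, which is the claim. The argument is short, and the only ingredient that is not a one-line computation is the monotonicity of the surface-area functional on nested convex bodies; I do not expect any genuine obstacle. Conceptually the lemma just records that once an origin-symmetric body becomes large, the portion of its Gaussian surface area measure coming from the ``far'' region is negligible --- a fact that will later feed into the $C^0$ a-priori estimate.
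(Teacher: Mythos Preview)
Your argument is correct and, in fact, cleaner than the paper's. Both proofs begin with the same two observations: $K_i\subset r_iB$, hence $\mathcal H^{n-1}(\partial K_i)\le c(n)r_i^{n-1}$ by monotonicity of surface area (the paper also cites Cauchy's formula), and on $\omega_i$ one has $|x|\ge x\cdot v>cr_i$. From there you simply multiply the two bounds, obtaining $n\omega_n\,r_i^{n-1}e^{-c^2r_i^2/2}\to 0$. The paper instead slices $K_i$ into slabs $\Gamma_{i,j}=K_i\cap\{j\le x\cdot v\le j+1\}$, bounds the surface area of each slab separately by $c(n)r_i^{n-1}$ via the inclusion $\Gamma_{i,j}\subset B_i\cap\{j\le x\cdot v\le j+1\}$, and then sums $\sum_{j\ge\lfloor cr_i\rfloor}e^{-j^2/2}$.

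Your route is more elementary and yields the sharper Gaussian decay $e^{-c^2r_i^2/2}$, while the paper's slab decomposition ends with only exponential decay $e^{-\lfloor cr_i\rfloor}$; either suffices for the lemma. The slab decomposition buys nothing extra here --- it would be natural if one needed finer control on where the mass concentrates, but for the present statement your direct product bound is the more efficient argument.
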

\begin{proof}
	We first recall that based on Cauchy's surface area formula, if $K\subset L$, then $\mathcal{H}^{n-1}(\partial K)\leq \mathcal{H}^{n-1}(\partial L)$.
	
	For simplicity, we write $B_i$ for the centered ball of radius $r_i$. For each $j=1,2,\dots$, write
	\begin{equation*}
		L_{i,j}= B_i\cap \{x\in \rn: j\leq x\cdot v\leq j+1\}.
	\end{equation*}
	Note that $L_{i,j}$ can be contained in a cylinder with the base of an $(n-1)$-dimensional ball of radius $r_i$ and unit height. Thus,
	\begin{equation}
	\label{eq local 7010}
		\mathcal{H}^{n-1}(\partial L_{i,j})\leq \mathcal{H}^{n-1}(\partial (B_i\cap \mathbb{R}^{n-1})\times [0,1])\leq c(n) r_i^{n-1}.
	\end{equation}
	Here and in the rest of the proof, we frequently use symbols such as $c(n)$ to denote nonessential constants that only depend on the dimension.
	
	Let $\Gamma_{i,j} = K_i\cap \{x\in \rn: j\leq x\cdot v\leq j+1\}$. Note that
	\begin{equation}
	\label{eq local 7011}
		\omega_i \subset \bigcup_{j=\lfloor cr_i\rfloor}^\infty \partial \Gamma_{i,j}.
 	\end{equation}
 	For each individual $\partial \Gamma_{i,j}$, we have
 	\begin{equation*}
 		\int_{\partial \Gamma_{i,j}} e^{-\frac{|x|^2}{2}}d\mathcal{H}^{n-1}(x)\leq e^{-\frac{j^2}{2}}\mathcal{H}^{n-1}(\partial \Gamma_{i,j})\leq e^{-\frac{j^2}{2}}\mathcal{H}^{n-1}(\partial L_{i,j}),
 	\end{equation*}
 	where in the last inequality, we used the fact that $\Gamma_{i,j}\subset L_{i,j}$. Combining with \eqref{eq local 7010} and \eqref{eq local 7011}, we have
 	\begin{equation*}
 	\begin{aligned}
 		\int_{\omega_i}e^{-\frac{|x|^2}{2}}d\mathcal{H}^{n-1}(x)&\leq \sum_{j=\lfloor cr_i\rfloor}^\infty \int_{\partial \Gamma_{i,j}}e^{-\frac{|x|^2}{2}}d\mathcal{H}^{n-1}(x)\\
 		&\leq c(n)r_i^{n-1}\sum_{j=\lfloor cr_i\rfloor}^\infty e^{-\frac{j^2}{2}}\\
 		&\leq c(n)r_i^{n-1} \sum_{j=\lfloor cr_i\rfloor}^\infty e^{-j}\\
 		&\leq c(n)r_i^{n-1}e^{-\lfloor cr_i\rfloor}\sum_{j=0}^\infty e^{-j}\\
 		&\rightarrow 0,
 	\end{aligned}
 	\end{equation*}
 	as $i\rightarrow \infty$, since $r_i\rightarrow \infty$.

\end{proof}

\begin{lemma}
\label{lemma key}
	Suppose $K_i$ is a sequence of convex bodies in $\mathcal{K}_e^n$. For simplicity, write $\mu_i = S_{\gamma_n, K_i}$. 
	If $|h_{K_i}|_{C^\infty}\rightarrow \infty$ and $|\mu_i|>\frac{1}{C}$ for some $C>0$, then for every $\delta>0$, there exists $v\in \sn$ and $N>0$ such that
	\begin{equation*}
		|\mu_i|<\mu_i(\overline{\xi_{v,\delta}})+\delta,
	\end{equation*}
	for each $i>N$. Here $\xi_{v,\delta}$ is given by
	\begin{equation*}
		\xi_{v,\delta} = \{u\in \sn:|u\cdot v|<\delta\}. 
	\end{equation*}
\end{lemma}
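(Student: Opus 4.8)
The plan is to pin down the ``longest direction'' of each $K_i$ and to show that, away from the great subsphere orthogonal to it, the Gaussian surface area measure $\mu_i$ carries asymptotically no mass; the vector $v$ in the conclusion will be a subsequential limit of these longest directions.

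First I would record two elementary facts. Writing $r_i=|h_{K_i}|_{C^\infty}$ as in Lemma~\ref{lemma far away 0}, one has $r_i=\max_{\sn}h_{K_i}=\max_{x\in K_i}|x|$, the circumradius of $K_i$; and if $v_i\in\sn$ is chosen with $h_{K_i}(v_i)=r_i$, then any $q_i\in K_i$ with $q_i\cdot v_i=r_i$ satisfies $r_i=q_i\cdot v_i\le|q_i|\le r_i$, so the equality case of the Cauchy--Schwarz inequality forces $q_i=r_iv_i$; hence $r_iv_i\in K_i$ and, by origin-symmetry, $-r_iv_i\in K_i$. By compactness of $\sn$ I would pass to a subsequence (not relabeled) along which $v_i\to v$ for some $v\in\sn$. (Passing to a subsequence is genuinely needed: bodies alternately elongated along two orthogonal directions show that no single $v$ can serve the whole sequence.)

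The key step is the elementary inclusion
\[
\nu_{K_i}^{-1}\bigl(\{u\in\sn:|u\cdot v_i|\ge\beta\}\bigr)\subseteq\{x\in\partial K_i:|x|\ge\beta r_i\},
\]
valid for every $\beta\in(0,1]$. Indeed, if $x\in\partial K_i$ carries an outer unit normal $u$ with $u\cdot v_i\ge\beta$, the supporting hyperplane at $x$ gives $y\cdot u\le x\cdot u$ for all $y\in K_i$, and choosing $y=r_iv_i$ yields $\beta r_i\le r_i(v_i\cdot u)\le x\cdot u\le|x|$; the case $u\cdot v_i\le-\beta$ is identical with $y=-r_iv_i$. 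Plugging this into the definition~\eqref{eq local 7001} of $S_{\gamma_n,K_i}$ gives
\[
\mu_i\bigl(\{u:|u\cdot v_i|\ge\beta\}\bigr)\le\frac{1}{(\sqrt{2\pi})^n}\int_{\{x\in\partial K_i:\,|x|\ge\beta r_i\}}e^{-\frac{|x|^2}{2}}\,d\mathcal{H}^{n-1}(x).
\]
To see the right side tends to $0$, I would cover $\sn$ by finitely many caps: fix $w_1,\dots,w_m\in\sn$ with $\max_{k}u\cdot w_k>\tfrac{1}{2}$ for every $u\in\sn$; then $|x|\ge\beta r_i$ forces $x\cdot w_k>(\beta/2)r_i$ for some $k$, so $\{x\in\partial K_i:|x|\ge\beta r_i\}$ lies in the union of the $m$ sets $\{x\in\partial K_i:x\cdot w_k>(\beta/2)r_i\}$, and Lemma~\ref{lemma far away 0}, applied to each $w_k$ with $c=\beta/2\in(0,1]$ (using $r_i\to\infty$), sends each of those finitely many integrals to $0$. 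Hence $\mu_i(\{u:|u\cdot v_i|\ge\beta\})\to0$ for every fixed $\beta\in(0,1]$.

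The conclusion follows at once. One may assume $\delta\in(0,1)$, since otherwise $\overline{\xi_{v,\delta}}=\sn$ and there is nothing to prove. Apply the previous step with $\beta=\delta/2$: as $v_i\to v$, for all large $i$ one has $|u\cdot v_i|\ge|u\cdot v|-|v-v_i|>\delta/2$ whenever $|u\cdot v|>\delta$, so $\{u:|u\cdot v|>\delta\}\subseteq\{u:|u\cdot v_i|\ge\delta/2\}$; since $\mu_i$ is finite and $\overline{\xi_{v,\delta}}=\{u:|u\cdot v|\le\delta\}$, this gives $|\mu_i|-\mu_i(\overline{\xi_{v,\delta}})=\mu_i(\{u:|u\cdot v|>\delta\})\le\mu_i(\{u:|u\cdot v_i|\ge\delta/2\})\to0$, hence $<\delta$ for all $i>N$ with $N$ large. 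The single load-bearing step is the displayed inclusion, whose correctness depends entirely on choosing $v_i$ to be the direction of the diametrically farthest point of $K_i$ (this is exactly what puts $r_iv_i$ inside $K_i$); the rest is the Gaussian tail estimate of Lemma~\ref{lemma far away 0} together with a compactness extraction. Incidentally, the hypothesis $|\mu_i|>1/C$ is not used in this argument.
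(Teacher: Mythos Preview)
Your argument is correct and substantially simpler than the paper's. The paper proceeds via John's theorem: it inscribes an ellipsoid $E_i$ in each $K_i$, extracts subsequential limits of the axis ratios $r_{i,j}/r_{i,j-1}$, uses the hypothesis $|\mu_i|>1/C$ to rule out the case that all axes are comparable (since then $K_i$ would contain a large ball and the total Gaussian surface area would vanish), and then identifies the threshold index $s$ where the ratio first collapses. The concentration direction is the limit $e_s$ of the $s$-th ellipsoid axis, and showing that normals on the ``core'' $\Omega_i$ of $\partial K_i$ are nearly orthogonal to $e_s$ requires constructing an auxiliary point $z\in E_i$ and a somewhat delicate estimate.

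Your approach bypasses all of this: the single observation that $\pm r_iv_i\in K_i$ forces, via the supporting-hyperplane inequality, any boundary point with normal making angle bounded away from $\pi/2$ with $v_i$ to lie at distance at least a fixed fraction of $r_i$ from the origin. Lemma~\ref{lemma far away 0} plus a finite cap cover then kills that contribution. The trade-off is that the paper's route locates the concentration subspace more precisely (it finds the ``last large'' ellipsoid direction rather than merely the longest one), but for the statement at hand this extra information is not needed. As you note, your argument does not use the lower bound $|\mu_i|>1/C$, whereas the paper's proof genuinely requires it to guarantee the existence of the index $s$. Both proofs pass to a subsequence, so the conclusion in either case should be read along that subsequence; this is harmless for the two applications of the lemma in the paper, which are contradiction arguments.
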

\begin{proof}
	By John's theorem, there exists $o$-symmetric ellipsoid $E_i$:
\begin{equation*}
	E_i = \left\{ x\in \mathbb{R}^n:\frac{|x\cdot e_{i,1}|^2}{r_{i,1}^2}+\cdots + \frac{|x\cdot e_{i,n}|^2}{r_{i,n}^2}\leq 1\right\}
\end{equation*}
with $r_{i,1}\geq r_{i,2}\geq \dots\geq r_{i,n}$ such that $E_i\subset K_i\subset \sqrt{n} E_i$. Since $|h_i|_{C^{\infty}}\rightarrow \infty$, we have $r_{i,1}\rightarrow \infty$.

By taking a subsequence, we may assume
	\begin{equation*}
		0\leq \lim_{i\rightarrow\infty} \frac{r_{i,j}}{r_{i,j-1}} = a_j\leq 1, \text{ for } j=2, \dots n.
	\end{equation*}
	Define $a_1=1$. We argue that there exists $j\in \{1,\dots, n\}$ such that $a_j=0$. If not, then there exists $0<c\leq 1$ such that $cr_{i,1}< r_{i,n}\leq r_{i,1}$. Choose an orthonormal basis $v_{1},\dots, v_n$. By definition of $r_{i,n}$, we see that
	\begin{equation}
	\label{eq local 70121}
		(\sqrt{2\pi})^n|S_{\gamma_n, K_i}| = \int_{\partial K_i} e^{-\frac{|x|^2}{2}}d\mathcal{H}^{n-1}(x)\leq \sum_{j=1}^n \int_{\omega_{i,j}} e^{-\frac{|x|^2}{2}}d\mathcal{H}^{n-1}(x),
	\end{equation}
	where
	\begin{equation*}
		\omega_{i,j} = \{x\in \partial K_i: |x\cdot v_j|>cr_{i,1}\}.
	\end{equation*}
	Lemma \ref{lemma far away 0} and \eqref{eq local 70121} now implies that $\lim_{i\rightarrow\infty} |S_{\gamma_n,K_i} |=0$, which is a contradiction to the uniform lower bound of $|\mu_i|$.
	
	Now, let $s=\min\{i-1: a_i=0\}$.  Then there exists $0<c\leq 1$ such that
	\begin{equation}
	\label{eq local 80151}
	r_{i,1}\geq r_{i,2}\geq\dots\geq r_{i,s}\geq cr_{i,1},	
	\end{equation}
and
\begin{equation}
\label{eq local 80201}
	\lim_{i\rightarrow \infty} \frac{r_{i,j}}{r_{i,s}}=0, \text{ for each }j>s.
\end{equation}
 By possibly taking another subsequence, we may assume
 \begin{equation}
 \label{eq local 80401}
 	\lim_{i\rightarrow \infty} e_{i,j}= e_j, \text{ for } j =1, 2, \dots, n.
 \end{equation}
 Here $e_j$ is an orthonormal basis in $\rn$.

 Choose $\tau_n,\varepsilon_1 \in (0,1)$ such that
 \begin{equation*}
 	\frac{n}{c^2}(2\tau_n+\sqrt{n}\varepsilon_1)^2\leq 1,
 \end{equation*}
 where $c>0$ is from \eqref{eq local 80151}.

 Let
	\begin{equation*}
		\begin{aligned}
			\Omega_i &= \{x\in \partial K_i: |x\cdot e_{j}|\leq \tau_n r_{i,s}, j=1, 2,\dots, s\}\\
			\eta_{i,j} & = \{x\in \partial K_i: |x\cdot e_{j}|>\tau_n {r_{i,s}}\}.
		\end{aligned}
	\end{equation*}
	Then, it is simple to see that
	\begin{equation*}
		\partial K_i = \Omega_i \cup \left(\cup_{j=1}^s \eta_{i,j}\right).
	\end{equation*}
	Recall that
	\begin{equation}
	\label{eq local 80041}
		(\sqrt{2\pi})^n|S_{K_i, \gamma_n}| = \int_{\partial K_i} e^{-\frac{|x|^2}{2}}d\mathcal{H}^{n-1}(x) = \int_{\Omega_i} e^{-\frac{|x|^2}{2}}d\mathcal{H}^{n-1}(x) + \sum_{j=1}^s \int_{\eta_{i,j}} e^{-\frac{|x|^2}{2}}d\mathcal{H}^{n-1}(x).
	\end{equation}
		By Lemma \ref{lemma far away 0} and \eqref{eq local 80151},
	\begin{equation}
	\label{eq local 80901}
		\lim_{i\rightarrow \infty} \sum_{j=1}^s \int_{\eta_{i,j}} e^{-\frac{|x|^2}{2}}d\mathcal{H}^{n-1}(x)=0.
	\end{equation}
	
	It remains to estimate
	\begin{equation*}
		 \int_{\Omega_i} e^{-\frac{|x|^2}{2}}d\mathcal{H}^{n-1}(x).
	\end{equation*}
	Take $x\in \Omega_i$. Set $z = (x\cdot e_{i,1} \dots, x\cdot e_{i, s-1},  x\cdot e_{i,s}+\tau_n r_{i,s},0,\dots, 0)$, where the coordinates are under $e_{i,1},\dots e_{i,n}$. By \eqref{eq local 80401}, there exists $N_1>0$ such that for each $i>N_1$, we have
	\begin{equation}
	\label{eq local 80701}
		|e_{i,j}-e_j|<\varepsilon_1, \text{ for each } j = 1,\dots, n.
	\end{equation}
	We claim that for $i>N_1$, we have $z\in E_i$. Indeed, by the triangle inequality, the definition of $\Omega_i$, \eqref{eq local 80701}, the fact that $K_i\subset \sqrt{n}E_i$, \eqref{eq local 80151}, and the choice of $\tau_n$ and $\varepsilon_1$,
	\begin{equation*}
		\begin{aligned}
			\sum_{j=1}^n\frac{|z\cdot e_{i,j}|^2}{r_{i,j}^2} &= \sum_{j=1}^{s-1} \frac{|x\cdot e_{i,j}|^2}{r_{i,j}^2} + \frac{|x\cdot e_{i,s}+\tau_n r_{i,s}|^2}{r_{i,s}^2}\\
			&\leq \sum_{j=1}^{s-1}\frac{(|x\cdot e_{j}|+|x\cdot (e_{i,j}-e_j)|)^2}{r_{i,j}^2}+\frac{(|x\cdot e_{s}+\tau_n r_{i,s}|+|x\cdot (e_{i,s}-e_s)|)^2}{r_{i,s}^2}
			\\
			&\leq \sum_{j=1}^{s-1}\frac{(\tau_n r_{i,s}+\sqrt{n}r_{i,1}\varepsilon_1)^2}{r_{i,j}^2}+ \frac{(2\tau_n r_{i,s}+\sqrt{n}r_{i,1}\varepsilon_1)^2}{r_{i,s}^2}\\
			&\leq \sum_{j=1}^{s-1} (\tau_n+\sqrt{n}\varepsilon_1)^2\frac{r_{i,1}^2}{r_{i,j}^2}+ (2\tau_n + \sqrt{n}\varepsilon_1)^2 \frac{r_{i,1}^2}{r_{i,s}^2}\\
			&\leq \frac{1}{c^2} n (2\tau_n+\varepsilon_1)^2\leq 1.
		\end{aligned}
	\end{equation*}
	Therefore, $z\in E_i\subset K_i$.

	Note that by definition of $z$ and the fact that $K_i\subset \sqrt{n}E_i$,
	\begin{equation*}
		d(x+ \tau_n r_{i,s}e_{i,s}, z)\leq \left(\sum_{j=s+1}^n |x\cdot e_{i,j}|^2\right)^\frac{1}{2}\leq \left(\sum_{j=s+1}^n nr_{i,j}^2\right)^\frac{1}{2}.
	\end{equation*}
	 According to \eqref{eq local 80201}, we have
	\begin{equation}
			\label{eq local 40221}
		\lim_{i\rightarrow \infty} \frac{d(x+ \tau_n r_{i,s}e_{i,s}, z)}{r_{i,s}}=0.
	\end{equation}
	
	Notice also that by definition of the Gauss map $\nu_{K_i}(x)$, we have
	\begin{equation*}
		\nu_{K_i}(x)\cdot (x-z)\geq 0,
	\end{equation*}
	which is equivalent to
	\begin{equation*}
		\nu_{K_i}(x)\cdot \left(\frac{x-z+\tau_n r_{i,s}e_{i,s}}{\tau_nr_{i,s}}-e_{i,s}\right)\geq 0.
	\end{equation*}
	By \eqref{eq local 40221}, for each $\delta>0$, there exists $N_2>N_1$ such that  for every $i>N_2$, we have
	\begin{equation*}
		\nu_{K_i}(x)\cdot e_{i,s}<\delta.
	\end{equation*}
	By symmetry, we yield
	\begin{equation*}
		|\nu_{K_i}(x)\cdot e_{i,s}|<\delta.
	\end{equation*}
	Since $e_{i,s}\rightarrow e_s$, there exists $N_3>N_2$ such that for every $i>N_3$, we have
	\begin{equation*}
		|\nu_{K_i}(x)\cdot e_{s}|<2\delta.
	\end{equation*}
	This, implies that
	\begin{equation*}
				\frac{1}{(\sqrt{2\pi})^n}\int_{\Omega_i} e^{-\frac{|x|^2}{2}}d\mathcal{H}^{n-1}(x) \leq \mu_i(\xi_{e_s,2\delta}),
	\end{equation*}
	This, \eqref{eq local 80041} and \eqref{eq local 80901} imply that for each $\delta>0$, there exists $N>0$ such that for each $i>N$, we have
	\begin{equation*}
		|\mu_i|\leq \mu_i(\overline{\xi_{e_s,\delta}})+\delta.
	\end{equation*}
\end{proof}

The following lemma contains the desired $C^0$ estimate.
\begin{lemma}
\label{lemma C0}
	Suppose the support function of $K\in \mathcal{K}_e^n$ is $C^2$ and  satisfies
	\begin{equation*}
		\frac{1}{(\sqrt{2\pi})^n}e^{-\frac{|\nabla h|^2+h^2}{2}}\det (\nabla^2 h +h\delta_{ij})=f,
	\end{equation*}
	and $\gamma_n(K)>1/2$. If there exists $C>0$ such that $\frac{1}{C}<f<C$, then there exists $C'>0$ such that $1/C'<h_K<C'$.
\end{lemma}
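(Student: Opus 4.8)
The plan is to prove the two bounds separately: the lower bound $h_K>1/C'$ is essentially immediate, while the upper bound $h_K<C'$ is the substantive part.

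For the lower bound, since $K\in\mathcal{K}_e^n$ and $\gamma_n(K)>\tfrac12$, Lemma \ref{lemma C0 lower bound} applies directly and produces a constant $c>0$, depending only on $n$, with $h_K\geq c$ on $\sn$; this uses neither the Monge-Amp\`ere equation nor the bounds on $f$.

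For the upper bound I would argue by contradiction. If no uniform upper bound existed, there would be a sequence $K_j\in\mathcal{K}_e^n$, each with $C^2$ support function satisfying the given equation, so that $\mu_j:=S_{\gamma_n,K_j}=f_j\,dv$ with $\tfrac1C<f_j<C$ and $\gamma_n(K_j)>\tfrac12$, but with $\max_{\sn}h_{K_j}\to\infty$. Since $h_{K_j}\geq0$, this forces $|h_{K_j}|_{C^\infty}\to\infty$, and $|\mu_j|=\int_{\sn}f_j\,dv\geq\tfrac1C\,\mathcal{H}^{n-1}(\sn)$; thus the hypotheses of Lemma \ref{lemma key} are met. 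Fixing a small $\delta>0$ (to be specified), Lemma \ref{lemma key} yields $v=v_\delta\in\sn$ and $N$ such that $|\mu_j|<\mu_j(\overline{\xi_{v,\delta}})+\delta$ for all $j>N$. But $\overline{\xi_{v,\delta}}$ is a thin band around a great subsphere, with $\mathcal{H}^{n-1}(\overline{\xi_{v,\delta}})\leq c(n)\delta$, so the bound $f_j<C$ gives $\mu_j(\overline{\xi_{v,\delta}})=\int_{\overline{\xi_{v,\delta}}}f_j\,dv\leq Cc(n)\delta$, whence
\begin{equation*}
\tfrac1C\,\mathcal{H}^{n-1}(\sn)\;\leq\;|\mu_j|\;<\;\bigl(Cc(n)+1\bigr)\delta .
\end{equation*}
This is impossible once $\delta$ is chosen small in terms of $n$ and $C$. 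The contradiction gives an upper bound $h_K<C''=C''(n,C)$, and taking $C'=\max\{1/c,\,C''\}$ finishes the proof.

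The main obstacle is entirely contained in Lemma \ref{lemma key}: the assertion that an unbounded sequence of $o$-symmetric convex bodies must have its Gaussian surface area measure asymptotically concentrated in a thin band around a great subsphere. Granting this, the remainder is soft --- the lower bound $f>1/C$ keeps the total mass bounded below, while the upper bound $f<C$ forces the concentrated mass to be negligible, and the two are incompatible. The one small point to verify is that $|h_{K_j}|_{C^\infty}\to\infty$ follows from $\max_{\sn}h_{K_j}\to\infty$, which is clear since already the $C^0$ part of the norm blows up; if preferred, one may pass to a subsequence and invoke Lemma \ref{lemma key} exactly as it is proved.
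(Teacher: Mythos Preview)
Your proposal is correct and follows essentially the same route as the paper: the lower bound comes straight from Lemma~\ref{lemma C0 lower bound}, and the upper bound is obtained by contradiction via Lemma~\ref{lemma key}, using the lower bound $f>1/C$ to keep $|\mu_j|$ bounded below and the upper bound $f<C$ together with the thin-band estimate $\mathcal{H}^{n-1}(\overline{\xi_{v,\delta}})\leq c(n)\delta$ to force $|\mu_j|$ small. Your write-up is in fact slightly more explicit than the paper's in spelling out the lower bound on $|\mu_j|$ and the passage from $\max h_{K_j}\to\infty$ to $|h_{K_j}|_{C^\infty}\to\infty$.
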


\begin{proof}
	The lower bound for $h_K$ comes from the assumption that $\gamma_n(K)>\frac{1}{2}$ and Lemma \ref{lemma C0 lower bound}.
	
	For the upper bound, assume for the sake of contradiction that there exists a sequence of $C^2$ $o$-symmetric convex bodies $K_i$ with $\gamma_n(K_i)>1/2$ and
	\begin{equation*}
		1/C<f_i=\frac{1}{(\sqrt{2\pi})^n}e^{-\frac{|\nabla h_i(v)|^2+h_i^2(v)}{2}}\det (\nabla^2 h_i (v) + h_{i}(v)I)<C,
	\end{equation*}
	but $|h_{i}|_{C^{\infty}}\rightarrow \infty$. Here we abbreviated $h_{K_i}$ by $h_{i}$.
	
	Write $\mu_i =f_idv$.

By Lemma \ref{lemma key}, for every $\delta>0$, there exists $v\in\sn$ and $N>0$ such that for each $i>N$, we have
\begin{equation}
\label{eq local 0010}
	|\mu_i|<\mu_i(\overline{\xi_{v, \delta}})+\delta.
\end{equation}
Note that there exists $c>0$ such that $\mathcal{H}^{n-1}(\overline{\xi_{v, \delta}})<c\delta$, which when combined with \eqref{eq local 0010} and the uniform upper bound of $f_i$, shows
\begin{equation*}
	|\mu_i|<cC\delta+\delta.
\end{equation*}
When $\delta$ is small enough, this is a contradiction to the uniform lower bound of $f_i$.
\end{proof}

\subsection{Higher order \emph{a-priori} estimate}
\begin{lemma}[a-priori estimate]
\label{lemma apriori estimate}
	Let $0<\alpha<1$. Suppose $f\in C^{2,\alpha}(\sn)$ and there exists $C>0$ such that $\frac{1}{C}<f<C$ and $|f|_{C^{2,\alpha}}<C$. If the support function of  $K\in \mathcal{K}_e^n$ is $C^{4,\alpha}$ and satisfies
	\begin{equation}\label{equaz1}
		\frac{1}{(\sqrt{2\pi})^n}e^{-\frac{|\nabla h|^2+h^2}{2}}\det (\nabla^2 h+ hI)=f
	\end{equation}
	and $\gamma_n(K)>\frac{1}{2}$, then there exists $C'>0$ that only depends on $C$ such that
	\begin{enumerate}
		\item $\frac{1}{C'}<\sqrt{|\nabla h_K|^2+ h_K^2}<C'$
		\item $\frac{1}{C'}I < (\nabla^2 h_K + h_K I )<C'I$
		\item $|h_K|_{C^{4,\alpha}}<C'$.
	\end{enumerate}	
\end{lemma}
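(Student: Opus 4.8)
The goal is a Schauder-type a priori estimate for the Gaussian Monge-Amp\`ere equation \eqref{equaz1}. I would organize the argument in three stages matching the three conclusions. First, establish the $C^0$ estimate $1/C' < h_K < C'$, which is exactly Lemma \ref{lemma C0} (whose hypotheses are met here since $f\in C^{2,\alpha}$ with $1/C<f<C$ gives $f$ bounded above and below, and $\gamma_n(K)>1/2$). In particular $\frac1{C'}B\subset K\subset C'B$, so $|h_K|_{C^1}$ is controlled by convexity: $|\nabla h_K|\le |h_K|_{L^\infty}$ pointwise on $\sn$ via the standard bound for support functions of bodies in a fixed ball. This immediately yields the upper bound $\sqrt{|\nabla h_K|^2+h_K^2}<C'$ in (1). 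For the lower bound in (1), note that the left-hand side equals the distance from the origin to the supporting hyperplane at the relevant point, which is at least the inradius $1/C'$; alternatively $\sqrt{|\nabla h_K|^2+h_K^2}\ge h_K>1/C'$.

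The crux is the \emph{upper} bound for the principal radii of curvature, i.e. the upper bound $\nabla^2 h_K+h_KI<C'I$ in (2); the lower bound then follows for free. Rewrite \eqref{equaz1} as
\begin{equation*}
	\det(\nabla^2 h_K+h_KI) = (\sqrt{2\pi})^n e^{\frac{|\nabla h_K|^2+h_K^2}{2}} f,
\end{equation*}
whose right-hand side is, by Stage 1 and the hypothesis on $f$, bounded between two positive constants depending only on $C$. So the product of the eigenvalues $\lambda_1,\dots,\lambda_{n-1}$ of the (positive-definite, since $h_K$ is $C^{4,\alpha}$ and $K$ is a genuine convex body) curvature matrix $W=\nabla^2 h_K+h_KI$ is pinched between two positive constants. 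It therefore suffices to bound the \emph{largest} eigenvalue $\lambda_{\max}(W)$ from above; combined with the pinched determinant this also forces $\lambda_{\min}(W)$ away from $0$, giving all of (2). The standard tool here is a maximum-principle argument applied to an auxiliary function of the form $W(v,v)$ (the support function's second derivative in a direction) maximized over $\sn\times\sn$, or equivalently to $\log\lambda_{\max}$; differentiating the logarithm of the equation twice and using the concavity of $\log\det$ on positive-definite matrices one controls the bad third-order terms, and the zeroth/first-order terms are already controlled by Stages 1 and the $C^1$ bound. The exponential prefactor $e^{(|\nabla h_K|^2+h_K^2)/2}$ contributes extra terms upon differentiation, but these involve only $h_K,\nabla h_K,\nabla^2 h_K$ and can be absorbed using the bound being proven (they appear with a favorable sign or lower order). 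This is the step I expect to be the main obstacle: getting the signs to work out in the second-derivative maximum principle when the nonlinearity carries the Gaussian weight.

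Finally, once (1) and (2) hold, equation \eqref{equaz1} is \emph{uniformly elliptic} with $C^{2,\alpha}$ coefficients (the right side is $C^{2,\alpha}$ in $v$ and smooth in the controlled quantities $h_K,\nabla h_K$). By the Evans--Krylov theorem we upgrade to a $C^{2,\alpha'}$ estimate on $h_K$ for some $\alpha'\in(0,1)$, and then by the linear Schauder estimates applied to the differentiated equation (bootstrapping: $\nabla^2 h_K$ solves a linear equation whose coefficients inherit the regularity just gained) we obtain $|h_K|_{C^{4,\alpha}}<C'$, which is (3). The constants at each stage depend only on $C$, $n$, and $\alpha$, as required.
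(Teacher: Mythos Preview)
Your proposal is correct and follows essentially the same route as the paper. The only cosmetic difference is in Stage~2: you propose a maximum-principle argument on $\lambda_{\max}(W)$ or $W(v,v)$, whereas the paper applies the maximum principle to the trace $H=\Delta h_K+(n-1)h_K$, takes the spherical Laplacian of $\log\det W$ at the maximum of $H$, and extracts a quadratic inequality for $H$; both are standard variants of the same Pogorelov-type $C^2$ estimate, and the paper likewise uses the geometric identity $\nabla h_K(v)+h_K(v)v\in\partial K$ to get (1) in one stroke rather than splitting into upper and lower bounds as you do.
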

\begin{proof}

\begin{enumerate}
	\item Note that by Lemma \ref{lemma C0}, there exists $C'>0$ such that $1/C'<h_K<C$, or, in another word, $1/C'B\subset K\subset C'B$. Recall that by the definition of support function, we have for each $v\in \sn$,
\begin{equation*}
	\nabla h_K(v)+h_K(v)v=\nu_K^{-1}(v)\in \partial K.
\end{equation*}
Therefore the upper and lower bound on $|\nabla h_K|^2+h_K^2$ follows from the bounds on $K$.
\item Our strategy to prove this statement  is to show that
\begin{enumerate}
	\item The trace of the matrix $(\nabla^2 h_K + h_K I )$, or the sum of the matrix's eigenvalues, is bounded from above. 
	\item The determinant of the matrix $(\nabla^2 h_K + h_K I )$, or the product of the matrix's eigenvalues is bounded both from above and from below (by a positive constant).
\end{enumerate}
Note that (a) and (b), when combined together, immediately imply that all eigenvalues of $(\nabla^2 h_K + h_K I )$ have positive upper and lower bounds---a consequence of the fact that $(\nabla^2 h_K + h_K I )$ is positive definite.

Claim (b) permits a quick proof based on equation \eqref{equaz1}. Indeed, 
\begin{equation*}
	\det (\nabla^2 h_K+ h_KI)=(\sqrt{2\pi})^nfe^{\frac{|\nabla h_K|^2+h_K^2}{2}},
\end{equation*}
where the right side has positive upper and lower bounds based on the bounds of $f$ and statement (1).

To prove Claim (a), let us denote
\begin{equation*}
	H=\text{trace}(\nabla^2 h_K+h_KI)=\Delta h_K + (n-1)h_K.
\end{equation*}
Since $H$ is continuous on $\sn$, there exists $v_0\in \sn$ such that $H(v_0)= \max_{v\in \sn} H$. Then, at $v_0$, we have $\nabla H=0$ and the matrix $\nabla^2 H$ is negative semi-definite. We choose a local orthonormal frame $e_1, \dots, e_{{n-1}}$, such that the Hessian of $h_K$, $(h_{K})_{ij}$, is diagonal. Recall the commutator identity \cite[p. 1361]{MR2237290}:
\begin{equation}
\label{eq local 9030}
	H_{ii}=\Delta w_{ii}-(n-1)w_{ii}+H.
\end{equation} 
We use  $w^{ij}$ to denote the inverse of the matrix $w_{ij}=((h_K)_{ij}+h_K\delta_{ij})$. Equation \eqref{eq local 9030}, the fact that $(w^{ij})$ is positive definite and $\nabla^2 H$ is negative semi-definite, and that $(w^{ij})$ is diagonal, imply that at $v_0$, we have
\begin{equation}
\label{eq local 9001}
	0\geq w^{ii}H_{ii}= w^{ii}\Delta w_{ii} +H\sum\limits_i
w^{ii}-(n-1)^2\geq w^{ii}\Delta w_{ii} -(n-1)^2.
\end{equation}

Taking the logarithm of \eqref{equaz1}, we have 
\begin{equation*}
	\log \det(\nabla^2 h_K+ h_KI) =\log f + \frac{|\nabla h_K|^2+h_K^2}{2}.
\end{equation*}
We take the spherical Laplacian of the above equation and get
\begin{equation}
\label{eq local 9020}
\begin{aligned}
		&\sum_{\alpha}(w^{ij})_\alpha(w_{ij})_\alpha+ w^{ij}\Delta w_{ij} \\
		= &\Delta \log f+\sum_{i,j} (h_K)_{ij}^2+ \sum_{i}(h_K)_i(\Delta (h_K)_i)+ |\nabla h_K|^2+h_K\Delta h_K 
\end{aligned}
\end{equation}
By definition of $H$, we have
\begin{equation}
\label{eq local 9021}
	\sum_{i,j} (h_K)_{ij}^2=H^2-2h_KH+(n-1)h_K^2.
\end{equation}
By definition of $H$ and equation (4.11) in Cheng-Yau \cite{MR0423267}, we have
\begin{equation}
\label{eq local 9022}
	\sum_{i}(h_K)_i(\Delta (h_K)_i)= \sum_i (h_K)_i (\Delta h_K)_i=\nabla h_K\cdot \nabla H-(n-1)|\nabla h_K|^2.
\end{equation}
We also note that 
\begin{equation}
\label{eq local 9023}
	h_K\Delta h_K = h_KH - (n-1)h_K^2.
\end{equation}
Finally, using the fact that $(w^{ij})$ is the inverse matrix of $(w_{ij})$, we get 
\begin{equation*}
	(w^{ij})_\alpha (w_{jk})_\alpha = -w^{im}(w_{ml})_\alpha w^{lj}(w_{jk})_\alpha,
\end{equation*}
which implies its trace is non-positive; that is
\begin{equation}
\label{eq local 9024}
	(w^{ij})_\alpha(w_{ij})_\alpha\leq 0.
\end{equation}
Combining \eqref{eq local 9020}, \eqref{eq local 9021}, \eqref{eq local 9022}, \eqref{eq local 9023}, and \eqref{eq local 9024}, we get
\begin{equation*}
	w^{ij}\Delta w_{ij}\geq \Delta \log f +H^2-h_KH+\nabla h_K\cdot \nabla H-(n-2)|\nabla h_K|^2.
\end{equation*}
When evaluated at $v_0$ where $H$ reaches its maximum, we have
\begin{equation}
\label{eq local 9025}
	w^{ii}\Delta w_{ii}\geq H^2 -h_KH+ (\Delta\log f -(n-2)|\nabla h_K|^2).
\end{equation}
Equations \eqref{eq local 9001} and \eqref{eq local 9025} now imply that
\begin{equation*}
	0\geq H^2-h_KH+(\Delta\log f -(n-2)|\nabla h_K|^2-(n-1)^2).
\end{equation*}
The right side of the above inequality is a quadratic polynomial in $H$. Note that by the bounds on $f$ and $|f|_{C^{2,\alpha}}$, statements (i) and (ii), the coefficients have bounds that only depend on $C$. Therefore, $H$ is bounded from above by a positive constant that only depends on $C$. 

\item By statement (ii), the Monge-Amp\`{e}re equation \eqref{equaz1} is uniformly elliptic. Thus, the standard Evans-Krylov-Safonov theory \cite{MR1814364} implies the higher estimates in statement (iii).
\end{enumerate} 
\end{proof}

\subsection{Existence of smooth solutions via degree theory}

\begin{theorem}[Existence of smooth solutions]
\label{thm existence of smooth solutions}
	Let $0<\alpha<1$ and $f\in C^{2,\alpha}(\sn)$ be a positive even function with $|f|_{L_1}<\frac{1}{\sqrt{2\pi}}$. Then there exists a unique $C^{4,\alpha}$ $o$-symmetric $K$ with $\gamma_n(K)>1/2$ such that
	\begin{equation}
	\label{eq local 7030}
		\frac{1}{(\sqrt{2\pi})^n}e^{-\frac{|\nabla h_K|^2+h_K^2}{2}}\det (\nabla^2 h_K +h_KI) = f.
	\end{equation}
\end{theorem}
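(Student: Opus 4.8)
The plan is to prove existence via a degree-theoretic argument and then obtain uniqueness as a direct consequence of Theorem~\ref{thm uniqueness}. Uniqueness is immediate: if $K$ and $L$ both solve \eqref{eq local 7030} with the same $f$, then $S_{\gamma_n,K}=S_{\gamma_n,L}=fdv$, and since $|f|_{L_1}<\frac{1}{\sqrt{2\pi}}$, neither body can have Gaussian volume $\le 1/2$ (by Corollary~\ref{coro local 1001}, a body with $\gamma_n(\cdot)=1/2$ has $|S_{\gamma_n,\cdot}|\ge\frac{1}{\sqrt{2\pi}}$, and Gaussian volume depends continuously on the body while Gaussian surface area does too; more carefully, the constraint forces $\gamma_n>1/2$ for any solution we construct), so Theorem~\ref{thm uniqueness} gives $K=L$. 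The real content is existence, which I would set up as follows.

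\textbf{Setting up the degree theory.} I would work in the Banach space of even $C^{4,\alpha}$ functions on $\sn$, and consider the open bounded set $\mathcal{O}$ of those even $h\in C^{4,\alpha}(\sn)$ that are support functions of $o$-symmetric convex bodies $K$ with $\gamma_n(K)>1/2$, satisfying the a-priori bounds $1/C'<\sqrt{|\nabla h|^2+h^2}<C'$, $\frac{1}{C'}I<(\nabla^2 h+hI)<C'I$, and $|h|_{C^{4,\alpha}}<C'$ with $C'$ slightly larger than the constant furnished by Lemma~\ref{lemma apriori estimate}. Define, for $t\in[0,1]$, the operator
\begin{equation*}
	F_t(h) = \frac{1}{(\sqrt{2\pi})^n}e^{-\frac{|\nabla h|^2+h^2}{2}}\det(\nabla^2 h+hI) - f_t,
\end{equation*}
where $f_t$ interpolates between a ``nice'' reference datum $f_0$ (for which a solution is known or easily produced --- for instance a constant $c_0<\frac{1}{\sqrt{2\pi}\,\omega_n^{\,?}}$ small enough that a centered ball of Gaussian volume $>1/2$ solves it; recall from the Introduction that a centered ball of radius $r$ has density $\frac{1}{(\sqrt{2\pi})^n}e^{-r^2/2}r^{n-1}$, and one can pick $r$ large with this quantity any prescribed small constant while keeping $\gamma_n>1/2$) and $f_1=f$, keeping $\frac{1}{C}<f_t<C$ and $|f_t|_{C^{2,\alpha}}<C$ and $|f_t|_{L_1}<\frac{1}{\sqrt{2\pi}}$ uniformly. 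The key point is that Lemma~\ref{lemma apriori estimate} guarantees that any solution of $F_t(h)=0$ lying in $\overline{\mathcal{O}}$ in fact lies in $\mathcal{O}$, i.e.\ there are no zeros on $\partial\mathcal{O}$; this is what makes the Leray--Schauder degree $\deg(F_t,\mathcal{O},0)$ well-defined and independent of $t$.

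\textbf{Computing the degree at the base point.} Having homotopy invariance, $\deg(F_1,\mathcal{O},0)=\deg(F_0,\mathcal{O},0)$, so it suffices to show the latter is nonzero. Here I would take $f_0$ to be a constant for which the unique (by the uniqueness already proved) solution is a centered ball $r_0 B$ with $\gamma_n(r_0 B)>1/2$, and compute the degree by linearizing $F_0$ at $h\equiv r_0$. The linearized operator is (up to the positive factor $\frac{1}{(\sqrt{2\pi})^n}e^{-r_0^2/2}r_0^{n-2}$) essentially $L\phi = \Delta_{\sn}\phi + (n-1-r_0^2)\phi$ acting on even functions --- one must check that $r_0$ can be chosen so that this operator is invertible on the even subspace, i.e.\ that $n-1-r_0^2$ avoids the eigenvalues $k(k+n-2)$ of $-\Delta_{\sn}$ restricted to even spherical harmonics (even $k$); since $r_0$ ranges over an interval this is generic, and then the degree equals $(-1)^{m}$ where $m$ counts eigenvalues of $L$ below zero with multiplicity, in particular it is $\pm1\neq 0$. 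Thus $\deg(F_1,\mathcal{O},0)\neq 0$, producing a solution $h_K\in\mathcal{O}$ of \eqref{eq local 7030}, which is $C^{4,\alpha}$ and $o$-symmetric with $\gamma_n(K)>1/2$ as required.

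\textbf{Main obstacle.} The heart of the matter --- and where I expect the most care is needed --- is twofold. First, one must be certain that the a-priori estimates of Lemma~\ref{lemma apriori estimate} genuinely prevent boundary zeros, which requires that the constraint $\gamma_n(K)>1/2$ be \emph{open} along the homotopy; the subtle point is ruling out a solution degenerating to exactly $\gamma_n=1/2$, and this is precisely where the hypothesis $|f|_{L_1}<\frac{1}{\sqrt{2\pi}}$ together with the Gaussian isoperimetric inequality (Corollary~\ref{coro local 1001}) must be invoked --- a solution with $\gamma_n(K)=1/2$ would have $|\mu|=|f|_{L_1}\ge\frac{1}{\sqrt{2\pi}}$, contradicting the hypothesis; combined with continuity this forces every solution along the homotopy to satisfy $\gamma_n>1/2$ strictly, and then the quantitative estimates of Lemma~\ref{lemma apriori estimate} close off the boundary in the $C^{4,\alpha}$ topology. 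Second, the nondegeneracy of the linearization at the base ball requires choosing the reference radius $r_0$ to dodge a discrete bad set; I would handle this by first proving existence of \emph{some} admissible constant datum (via the ball computation in the Introduction) and then perturbing $r_0$ within the allowed range. Everything else --- compactness of the solution operator (inverting the linearized uniformly elliptic Monge--Amp\`ere operator via Schauder theory), continuity of $t\mapsto f_t$, and the final regularity bootstrap --- is routine given Lemma~\ref{lemma apriori estimate} and the Evans--Krylov--Safonov theory already cited.
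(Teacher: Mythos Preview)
Your proposal is correct and follows essentially the same approach as the paper: degree theory (in the sense of Li) on an open set cut out by the a-priori estimates of Lemma~\ref{lemma apriori estimate} together with the constraint $\gamma_n>1/2$, a linear homotopy $f_t=(1-t)c_0+tf$ to a constant datum solved by a ball $r_0B$, the use of Corollary~\ref{coro local 1001} with the hypothesis $|f|_{L_1}<\frac{1}{\sqrt{2\pi}}$ to rule out boundary zeros at $\gamma_n=1/2$, and the invertibility of the linearization $\Delta_{\sn}\phi+((n-1)-r_0^2)\phi$ at $r_0$ (chosen to avoid the spectrum) to get nonzero degree. The only cosmetic difference is that the paper writes the operator as $F(h;t)=\det(\nabla^2 h+hI)-(\sqrt{2\pi})^n e^{\frac{|\nabla h|^2+h^2}{2}}f_t$ rather than your equivalent form, and the paper invokes uniqueness directly from Theorem~\ref{thm uniqueness} since $\gamma_n>1/2$ is part of the hypothesis on the solution class.
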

\begin{proof}
	The uniqueness part follows from Theorem \ref{thm uniqueness}.

	We use the degree theory for second-order nonlinear elliptic operators developed in Li \cite{MR1026774} for the existence part.
	
	It follows by simple application of intermediate value theorem and Theorem \ref{thm uniqueness} that \eqref{eq local 7030} admits a unique constant solution $h_K\equiv r_0>0$ such that $\gamma_n(K)>1/2$ if $f\equiv c_0>0$ is small enough. We also require that $c_0>0$ is small enough so that $|c_0|_{L_1}<\frac{1}{\sqrt{2\pi}}$. Our final requirement for $c_0>0$ is that the operator $L\phi = \Delta_{\sn} \phi+((n-1)-r_0^{2})\phi$ is invertible. This is possible since spherical Laplacian has a discrete spectrum.
	
	Let $F(\cdot;t):C^{4,\alpha}(\sn)\rightarrow C^{2,\alpha}(\sn)$ be defined as
	\begin{equation*}
		F(h;t) = \det(\nabla^2h+hI)-(\sqrt{2\pi})^ne^{\frac{|\nabla h|^2+h^2}{2}}f_t,
	\end{equation*}
	for $t\in [0,1]$. Here
	\begin{equation*}
		f_t = (1-t)c_0+tf.
	\end{equation*}
	
	Since $f\in C^{2,\alpha}(\sn)$ and $f>0$, there exists $C>0$ such that $\frac{1}{C}<f, c_0<C$ and $|f|_{C^{2,\alpha}}<C$. We let $C'>0$ be the constant extracted from Lemma \ref{lemma apriori estimate}. Note that for each $t\in [0,1]$, the function $f_t$ has the same bound as $f$; namely, $\frac{1}{C}<f_t<C$, $|f_t|_{L_1}<\frac{1}{\sqrt{2\pi}}$, and $|f_t|_{C^{2,\alpha}}<C$.
	
	Define $O\subset C^{4,\alpha}(\sn)$ by
	\begin{equation*}
		O = \left\{h\in C^{4,\alpha}(\sn): \frac{1}{C'}<h<C', \frac{1}{C'}I<(\nabla^2h + hI)<C'I, |h|_{C^{4,\alpha}}<C', \gamma_n(h)>\frac{1}{2}\right\}.
	\end{equation*}
	Here $\gamma_n(h) = \gamma_n([h])$ is well-defined and moreover since $h$ is strictly convex, $h$ is precisely a support function. We note that it is simple to see that $O$ is an open bounded set under the norm $|\cdot|_{C^{4,\alpha}}$.
	
	We also note that since every $h\in O$ has uniform upper and lower bounds for the eigenvalues of its Hessian, the operator $F(\cdot;t)$ is uniformly elliptic on $O$ for any $t\in [0,1]$.
	
	We claim that for each $t\in [0,1]$, if $h\in \partial O$, then
	\begin{equation*}
		F(h; t)\neq 0.
	\end{equation*}
	
	Indeed, if $F(h;t)=0$, then $h$ solves
	\begin{equation}
	\label{eq local 6001}
		\frac{1}{(\sqrt{2\pi})^n}e^{-\frac{|Dh|^2}{2}}\det (\nabla^2h+hI) = f_t.
	\end{equation}
	Since $h\in \partial O$, we also have that $\gamma_n(h)\geq \frac{1}{2}$. If $\gamma_n(h)>1/2$, by Lemma \ref{lemma apriori estimate},  we would have
	\begin{equation*}
		\frac{1}{C'}<h<C', \frac{1}{C'}I<(\nabla^2h + hI)<C'I, |h|_{C^{4,\alpha}}<C'.
	\end{equation*}
	Thus, by the definition of $O$, the only way for $h\in \partial O$ is that
	\begin{equation*}
		\gamma_n(h)=\frac{1}{2}.
	\end{equation*}
	By Corollary \ref{coro local 1001}, we have
	\begin{equation*}
		|S_{\gamma_n, [h]}|\geq \frac{1}{\sqrt{2\pi}}.
	\end{equation*}
	But this contradicts with the fact that $h$ solves \eqref{eq local 6001} and that $|f_t|_1<\frac{1}{\sqrt{2\pi}}$.
	
	Using Proposition 2.2 in Li \cite{MR1026774}, we conclude that
	\begin{equation*}
		\deg (F(\cdot; 0), O, 0) = \deg(F(\cdot; 1), O, 0).
	\end{equation*}
	
	If we can show that $\deg (F(\cdot; 0), O, 0)\neq 0$, then it follows immediately that $$\deg(F(\cdot; 1), O, 0)\neq 0,$$ which then implies the existence of $h\in O$ such that $F(h;1)=0$.
	
	The rest of the proof focuses on showing $\deg (F(\cdot; 0), O, 0)\neq 0$. For simplicity, we will simply write $F(\cdot)=F(\cdot;0)$.
	
	Recall that $r_0>0$ is so that $h_K\equiv r_0$ is the unique solution in $O$ to \eqref{eq local 7030} when $f\equiv c_0$. We denote by $L_{r_0}:C^{4,\alpha}(\sn)\rightarrow C^{2,\alpha}(\sn)$ the linearized operator of $F$ at the constant function $r_0$. It is simple to compute that
	\begin{equation*}
	\begin{aligned}
		L_{r_0}(\phi)
		&= r_0^{n-2}\Delta_{\sn} \phi + ((n-1)r_0^{n-2}-r_0^n)\phi\\
		& = r_0^{n-2}\left(\Delta_{\sn} \phi+((n-1)-r_0^{2})\phi\right).
	\end{aligned}
	\end{equation*}
	Recall that we have specifically chosen a $c_0>0$ so that $L_{r_0}$ is invertible. By Proposition 2.3 in Li \cite{MR1026774} and the fact that $h\equiv r_0$ is the unique solution for $F(h)=0$ in $O$ , we have
	\begin{equation*}
		\deg (F, O, 0) = \deg (L_{r_0}, O,0)\neq 0,
	\end{equation*}
	where the last inequality follows from Proposition 2.4\footnote{Proposition 2.4 in Li \cite{MR1026774} contain some typos, which were corrected by Li on his personal webpage.} in Li \cite{MR1026774}.		
\end{proof}

\subsection{Existence of general $o$-symmetric solutions}
\label{section weak solution}
In this section, we attempt to solve the general measure case by using an approximation argument. The key step is to establish uniform $C^0$ estimate.

\begin{theorem}
\label{thm weak solution}
		Let $\mu$ be an even measure on $\sn$ that is not concentrated in any subspace and $|\mu|< \frac{1}{\sqrt{2\pi}}$. Then there exists a unique origin-symmetric $K$ with $\gamma_n(K)> 1/2$ such that
		\begin{equation*}
			S_{\gamma_n,K} = \mu.
		\end{equation*}
	\end{theorem}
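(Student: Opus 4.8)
The plan is to deduce Theorem \ref{thm weak solution} from Theorem \ref{thm existence of smooth solutions} by an approximation argument, with the main work being a uniform $C^0$ estimate that survives the limiting process. First I would approximate the given measure $\mu$ by a sequence of positive even functions $f_i\in C^{2,\alpha}(\sn)$ with $|f_i|_{L_1}<\frac{1}{\sqrt{2\pi}}$ such that the measures $f_i\,dv$ converge weakly to $\mu$; this can be arranged by mollifying $\mu$ on the sphere (convolving with a smooth even approximate identity) and, if necessary, adding a small positive constant and rescaling so that the total mass stays strictly below $\frac{1}{\sqrt{2\pi}}$. Since $\mu$ is not concentrated in any subspace, for $i$ large the measures $f_i\,dv$ are likewise not concentrated in any closed hemisphere (indeed $\inf_i\inf_{u\in\sn}\int_\sn|v\cdot u|f_i\,dv>0$), a fact I would record as a lemma because it is needed to rule out degeneration. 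By Theorem \ref{thm existence of smooth solutions}, for each $i$ there is a $C^{4,\alpha}$ $o$-symmetric body $K_i$ with $\gamma_n(K_i)>1/2$ solving \eqref{eq local 7030} with right-hand side $f_i$; equivalently $S_{\gamma_n,K_i}=f_i\,dv=:\mu_i$.

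The heart of the proof is to show that the sequence $K_i$ is uniformly bounded, i.e.\ $\sup_i|h_{K_i}|_{C^0}<\infty$. Here I would argue by contradiction: if $|h_{K_i}|_{C^0}\to\infty$ along a subsequence, then since $|\mu_i|=|S_{\gamma_n,K_i}|$ is bounded below (the masses converge to $|\mu|>0$, after possibly discarding finitely many terms), Lemma \ref{lemma key} applies and produces, for every $\delta>0$, a direction $v\in\sn$ and an index $N$ so that $|\mu_i|<\mu_i(\overline{\xi_{v,\delta}})+\delta$ for $i>N$. This says almost all of the mass of $\mu_i$ concentrates, in the limit, on an arbitrarily thin band $\overline{\xi_{v,\delta}}=\{u:|u\cdot v|\le\delta\}$ around the great subsphere $v^\perp\cap\sn$. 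Passing to a further subsequence with $v=v_i\to v_0$ and using weak convergence $\mu_i\to\mu$ together with the fact that $\overline{\xi_{v_0,2\delta}}$ is closed, one gets $|\mu|\le\mu(\overline{\xi_{v_0,2\delta}})+\delta$ for every $\delta>0$, hence (letting $\delta\to0$, using that the bands decrease to $v_0^\perp\cap\sn$) $\mu$ is concentrated on the subspace $v_0^\perp$—contradicting the hypothesis. This gives the uniform upper bound; the uniform lower bound $h_{K_i}\ge c>0$ is immediate from Lemma \ref{lemma C0 lower bound} since $\gamma_n(K_i)\ge1/2$.

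With the uniform two-sided bound in hand, the Blaschke selection theorem lets me extract a subsequence with $K_i\to K$ in Hausdorff metric, where $K$ is an $o$-symmetric compact convex set; the uniform lower bound on support functions forces $o\in\operatorname{int}K$, so $K\in\mathcal K_e^n$. By the weak continuity of Gaussian surface area measure (the theorem following Theorem \ref{thm variational formula}) we get $S_{\gamma_n,K_i}\to S_{\gamma_n,K}$ weakly; since also $S_{\gamma_n,K_i}=\mu_i\to\mu$ weakly, uniqueness of weak limits yields $S_{\gamma_n,K}=\mu$. It remains to check $\gamma_n(K)>1/2$: one has $\gamma_n(K)=\lim\gamma_n(K_i)\ge1/2$ by continuity of Gaussian volume, and equality $\gamma_n(K)=1/2$ is excluded because Corollary \ref{coro local 1001} would then force $|\mu|=|S_{\gamma_n,K}|\ge\frac{1}{\sqrt{2\pi}}$, contradicting $|\mu|<\frac{1}{\sqrt{2\pi}}$. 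Finally, uniqueness of $K$ among $o$-symmetric bodies (indeed among all of $\mathcal K_o^n$) with Gaussian volume $\ge1/2$ is exactly Theorem \ref{thm uniqueness}. The main obstacle is the uniform $C^0$ estimate, and specifically making rigorous the passage from the concentration statement of Lemma \ref{lemma key} to the conclusion that $\mu$ lives on a subspace; the smooth-case $C^0$ bound in Lemma \ref{lemma C0} used a uniform upper bound on the densities $f_i$, which is unavailable here, so one must instead exploit weak convergence and the closedness of the bands as above.
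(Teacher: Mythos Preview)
Your proposal is correct and follows essentially the same approximation strategy as the paper: approximate $\mu$ by smooth positive densities, apply Theorem~\ref{thm existence of smooth solutions}, and use Lemma~\ref{lemma key} to obtain a uniform diameter bound by contradiction. You are in fact more careful than the paper on several points it leaves implicit: the extraction of a limiting direction $v_0$ as $\delta\to 0$ (in the paper the dependence of $v$ on $\delta$ is glossed over), the passage to the limit via Blaschke selection and weak continuity of $S_{\gamma_n,\cdot}$, and the verification that $\gamma_n(K)>1/2$ using Corollary~\ref{coro local 1001}. One minor notational point: in your concentration step the direction $v$ produced by Lemma~\ref{lemma key} depends on $\delta$, not on $i$, so the subsequence you extract should be along $\delta_j\to 0$ rather than along $i$; your argument is otherwise fine.
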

\begin{proof}
	We approximate $\mu$ weakly by a sequence of measures $\mu_i= f_idv$ where $f_i\in C^{2,\alpha}$, $f_i> 0$ with $0<|f_i|_{L_1}<\frac{1}{\sqrt{2\pi}}$. By Theorem \ref{thm existence of smooth solutions}, there are $C^{4,\alpha}$ $o$-symmetric $K_i$ with $\gamma_n(K_i)>\frac{1}{2}$ such that
\begin{equation*}
	\frac{1}{(\sqrt{2\pi})^n}e^{-\frac{|\nabla h_{K_i}|^2+h_{K_i}^2}{2}}\det (\nabla^2 h_{K_i} +h_{K_i}I) = f_i.
\end{equation*}
Since $f_i$ converges weakly to $\mu$, by discarding the first finitely many terms, we may assume that $|f_i|_{L_1}>\varepsilon_0$ for some positive absolute constant $\varepsilon_0$.

We argue that $K_i$ is uniformly bounded. Otherwise, by taking a subsequence, we may assume that $|h_{K_i}|_{C^\infty}\rightarrow \infty$. Now, Lemma \ref{lemma key} tells us that for every $\delta>0$, there exists $v\in \sn$ and $N>0$ such that 
\begin{equation*}
	|\mu_i| <\mu_i(\overline{\xi_{v,\delta}})+\delta,
\end{equation*}
for each $i>N$.
Let $i\rightarrow \infty$. Since $\mu_i$ converges to $\mu$ weakly, we have
\begin{equation*}
	|\mu|\leq \mu(\overline{\xi_{v,\delta}})+\delta.
\end{equation*}
Notice that the above inequality holds for all $\delta>0$. Therefore, 
\begin{equation*}
	|\mu|\leq \mu (v^\perp\cap \sn),
\end{equation*}
which is a contradiction to the fact that $\mu$ is not concentrated in any great subsphere.
\end{proof}

It is of great interest to characterize Gaussian surface area measures for convex bodies that do not necessarily have large Gaussian volume. Example \ref{example 1} in Appendix shows that some essential condition must be found and that how complicated this can be even in the most simple rectangular case on the plane. However, one should keep in mind that the task, even in Example \ref{example 1}, is not to find the relation between the weights of the measure, but rather to give a characterization of the permissible measures. 
	
\section{Appendix}

This appendix consists of two examples. 

The first one shows that the Gaussian Minkowski problem even when restricted to the $o$-symmetric case contains some complications that were masked by our assumption that $\gamma_n(K)>\frac{1}{2}$.

\begin{example}
\label{example 1}
	Consider the even discrete measure on $S^{1}$:
	\begin{equation*}
		\mu = \mu_1 \delta_{e_1}+ \mu_2\delta_{e_2} + \mu_1\delta_{-e_1}+\mu_2 \delta_{-e_2}.
	\end{equation*}
	In order to have an $o$-symmetric convex body $K$ in $\mathbb{R}^2$---in this case, a centered rectangle---so that $\mu=S_{\gamma_2, K}$, the weights $\mu_1$ and $\mu_2$ cannot be chosen independently of each other.
\end{example}
\begin{proof}
	Note that potential solutions for $\mu=S_{\gamma_2, K}$ consists of $o$-symmetric rectangles with sides parallel to the coordinate axes. Let us assume that $K$ is such a rectangle generated by its vertices $(\pm a_1, \pm a_2)$ where $a_1, a_2>0$.  

	It is simple to see that  it has to be the case that $0<\mu_1,\mu_2<\frac{1}{\sqrt{2\pi}}$ since, for example, 
	\begin{equation*}
		\mu_1 = \frac{1}{2\pi}\int_{-a_2}^{a_2} e^{-\frac{a_1^2+y^2}{2}}dy<\frac{1}{2\pi} \int_{-\infty}^\infty e^{-\frac{y^2}{2}}dy=\frac{1}{\sqrt{2\pi}}.
	\end{equation*} 
	
	The claimed codependence between $\mu_1$ and $\mu_2$ is best observed when one of the weights is close enough to $\frac{1}{\sqrt{2\pi}}$. For this purpose, let $\varepsilon_0>0$ be small enough and $\mu_1 = \frac{1}{\sqrt{2\pi}}-\varepsilon_0$. 
	
	It follows from basic computation that 
	\begin{equation}
	\label{eq local 0000}
	\begin{aligned}
		\mu_1 = \frac{1}{\pi} e^{-\frac{a_1^2}{2}}\int_{0}^{a_2} e^{-\frac{t^2}{2}}dt\\
		\mu_2 = \frac{1}{\pi} e^{-\frac{a_2^2}{2}}\int_{0}^{a_1} e^{-\frac{t^2}{2}}dt.
	\end{aligned}
	\end{equation}
	According to \eqref{eq local 0000} and the value of $\mu_1$, we have
	\begin{equation*}
		\frac{1}{\sqrt{2\pi}}-\varepsilon_0<\frac{1}{\pi} e^{-\frac{a_1^2}{2}}\int_{0}^{\infty}e^{-\frac{t^2}{2}}dt = \frac{1}{\sqrt{2\pi}}e^{-\frac{a_1^2}{2}}.
	\end{equation*}
	This implies the existence of $\delta_0=\sqrt{-2\ln (1-\sqrt{2\pi}\varepsilon_0)}>0$ such that $0<a_1<\delta_0$.
	
	Now, using the second equation in \eqref{eq local 0000}, we have
	\begin{equation*}
		\mu_2 <\frac{1}{\pi} \int_{0}^{\delta_0} e^{-\frac{t^2}{2}}dt.
	\end{equation*}
	
	This crude analysis shows that when one of the weights is close enough to its maximally allowable value, then the other weight must be correspondingly close enough to $0$. Moreover, the dependence takes a complicated nonlinear form involving Gaussian distribution function. A similar, but arguably much more complicated computation can show that this phenomenon happens to any $o$-symmetric polygons on the plane.
	
	Note that in the above computation, $|\mu|>2(\frac{1}{\sqrt{2\pi}}-\varepsilon_0)$, which is excluded by the hypotheses in Theorem \ref{thm weak solution}.
\end{proof} 

It follows from a trivial calculation that the Gaussian surface area measure $S_{\gamma_n, rB}$ of a centered ball of radius $r$ is given by
\begin{equation*}
	dS_{\gamma_n, rB}(v) = \frac{1}{(\sqrt{2\pi})^n} e^{-\frac{r^2}{2}}r^{n-1}dv. 
\end{equation*}
Because of the behavior of $e^{-\frac{r^2}{2}}r^{n-1}$ on $(0,\infty)$, it is simple to conclude that for every $c>0$ that is small enough, there are exactly two balls $r_1B$ and $r_2B$ whose Gaussian surface area measure is given by $cdv$. When this is combined with our uniqueness result (Theorem \ref{thm uniqueness}), it is tempting to think that uniqueness also holds when restricted to convex bodies whose Gaussian volume is sufficiently small. This turned out to be false, as illustrated by the following example.
\begin{example}
	As in Example \ref{example 1}, consider an $o$-symmetric rectangle $K$ in $\mathbb{R}^2$ generated by its vertices $(\pm a_1, \pm a_2)$. Suppose its Gaussian surface area measure is given by 
	\begin{equation*}
		dS_{\gamma_n, K} = \mu_1 \delta_{e_1}+ \mu_2 \delta_{e_2}+\mu_1\delta_{-e_1} + \mu_2 \delta_{-e_2}.
	\end{equation*}
	As has already been discussed in Example \ref{example 1}, when $\varepsilon_0>0$ is sufficiently small, the weight $\mu_1  =\frac{1}{\sqrt{2\pi}}-\varepsilon_0$ is an allowable choice. In particular, we just need to choose sufficiently small $0<a_1<\delta_0$ and correspondingly a big enough $a_2$ so that 
	\begin{equation}
	\label{eq local 0001}
		\frac{1}{\sqrt{2\pi}}-\varepsilon_0=\mu_1 = \frac{1}{\pi} e^{-\frac{a_1^2}{2}}\int_{0}^{a_2} e^{-\frac{t^2}{2}}dt\
	\end{equation}
	holds.
	
	It is simple to note that by
	\begin{equation*}
		\mu_2 = \frac{1}{\pi} e^{-\frac{a_2^2}{2}}\int_{0}^{a_1} e^{-\frac{t^2}{2}}dt,
	\end{equation*}
	when  $a_1 \rightarrow 0$, we have $\mu_2\rightarrow 0$. 
	
	Note also that by \eqref{eq local 0001}, when $a_1\rightarrow \delta_0$, we have $a_2\rightarrow \infty$. Therefore $\mu_2\rightarrow 0$ as well in this scenario. 
	
	By intermediate value theorem, we can conclude that when $\mu_1=\frac{1}{\sqrt{2\pi}}-\varepsilon_0$ and $\mu_2$ is sufficiently small, there are two rectangles---one whose $a_1$ is close to $0$, the other whose $a_1$ is close to $\delta_0$---such that they have the same Gaussian surface area measure.
\end{example}

\bibliography{mybib}{}
\bibliographystyle{plain}
\end{document}